\documentclass[12pt,a4paper]{amsart}
\usepackage{amsfonts}
\usepackage{amsthm}
\usepackage{amsmath}
\usepackage{mathrsfs}
\usepackage{amscd}
\usepackage[latin2]{inputenc}
\usepackage{t1enc}
\usepackage{graphicx}
\usepackage{graphics}
\usepackage{pict2e}
\usepackage{epic}
\numberwithin{equation}{section}
\usepackage[margin=2.9cm]{geometry}
\usepackage{epstopdf} 
\usepackage{tikz}
\tikzstyle{v}=[circle, draw, fill=red, inner sep=0pt, minimum width=4pt]
\tikzstyle{w}=[circle, draw, fill=blue, inner sep=0pt, minimum width=4pt]
\tikzset{>=stealth}

\usepackage{enumerate}

\theoremstyle{plain}
\newtheorem{Th}{Theorem}[section]
\newtheorem{Lemma}[Th]{Lemma}
\newtheorem{Cor}[Th]{Corollary}
\newtheorem{Prop}[Th]{Proposition}

 \theoremstyle{definition}
\newtheorem{Def}[Th]{Definition}

\newtheorem{?}[Th]{Question}
\newtheorem{Ex}[Th]{Example}

\newcommand{\NN}{\mathbf N}
\newcommand{\ZZ}{\mathbf Z}
\newcommand{\del}{\partial}
\newcommand{\paths}{\mathscr P}

\begin{document}

\title{Determinantal formulas for SEM Expansions of Schubert Polynomials}


\author{Hassan Hatam}
\address{North Carolina State University \\ Department of Mathematics \\
Raleigh NC 27695} 
\email{hhatam@ncsu.edu}

\author{Joseph Johnson}
\address{North Carolina State University \\ Department of Mathematics \\
Raleigh NC 27695} 
\email{jwjohns5@ncsu.edu}

\author{Ricky Ini Liu}
\address{North Carolina State University \\ Department of Mathematics \\
	Raleigh NC 27695} 
\email{riliu@ncsu.edu}

\author{Maria Macaulay}
\address{North Carolina State University \\ Department of Mathematics \\
Raleigh NC 27695} 
\email{mlmacaul@ncsu.edu}

\thanks{The authors were partially supported by a National Science Foundation grant DMS-1700302. R.~I.~Liu was also partially supported by a National Science Foundation grant CCF-1900460.}



\begin{abstract}
	We show that for any permutation $w$ that avoids a certain set of 13 patterns of lengths 5 and 6, the Schubert polynomial $\mathfrak S_w$ can be expressed as the determinant of a matrix of elementary symmetric polynomials in a manner similar to the Jacobi-Trudi identity. For such $w$, this determinantal formula is equivalent to a (signed) subtraction-free expansion of $\mathfrak S_w$ in the basis of standard elementary monomials.
\end{abstract}

\maketitle

\section{Introduction}

The Schubert polynomials $\mathfrak S_w$ form an important basis of the polynomial ring $\ZZ[x_1, x_2, \dots]$, primarily due to their role as representatives for the classes of Schubert varieties in the cohomology of the flag variety. In this paper, we consider the expansion of Schubert polynomials in the SEM basis consisting of \emph{standard elementary monomials}  \[e_{j_1j_2\cdots}=e_{j_1}(x_1)e_{j_2}(x_1, x_2)e_{j_3}(x_1, x_2, x_3)\cdots,\]
where $e_{k}$ is the $k$th elementary symmetric polynomial and only finitely many of the $j_i$ are nonzero. Such SEM expansions of Schubert polynomials have been studied previously in \cite{FGP, Kirillov, LS, PS, Winkel}. 
In particular, it was shown by Fomin, Gelfand, and Postnikov \cite{FGP} that these expansions are important for the construction of \emph{quantum Schubert polynomials}, which can be used to compute Gromov-Witten invariants for the small quantum cohomology ring of the flag variety. Additionally, Postnikov and Stanley \cite{PS} noted that the problem of finding the SEM expansion of Schubert polynomials is equivalent to the problem of computing the \emph{inverse Schubert-Kostka matrix}---that is, the expansion of monomials in the Schubert basis.

One special case of Schubert polynomials are the Schur polynomials $s_\lambda(x_1, \dots, x_n)$, which have a determinantal formula in terms of elementary symmetric polynomials via the famous Jacobi-Trudi identity. It was observed by Kirillov \cite{Kirillov} that this identity can be slightly modified to give a determinantal formula that, when expanded, gives the SEM expansion for Schur polynomials. (See Corollary~\ref{cor:grassmannian} below.) A similar determinantal formula was given in \cite{PS} for $\mathfrak S_w$ when $w$ is a $213$-avoiding permutation. Such determinants can be interpreted via a nonintersecting lattice path model using the Lindstr\"om-Gessel-Viennot lemma.

Our main focus will be to study which Schubert polynomials $\mathfrak S_w$ can be expressed as a Jacobi-Trudi-like determinant that yields its SEM expansion (and can therefore be described by a nonintersecting lattice path model). Such determinantal formulas are particularly notable because any coefficient appearing in such an SEM expansion has absolute value at most $1$. Our main result will be to show that such a determinantal formula exists when $w$ avoids the following 13 patterns of length 5 and 6:
\[	51324, \quad
15324, \quad
52413, \quad
25413, \quad
53142, \quad
35142, \quad
31542,\]\[
143265, \quad
143625, \quad
143652, \quad
146352, \quad
413265, \quad
413625.\]
(This is not a necessary condition---see \S5 for further discussion.)


Our approach will utilize the fact that certain operations such as divided difference operators can be seen to act on the generating functions for nonintersecting lattice paths by moving the endpoints in a simple combinatorial way. A similar observation was also used in \cite{CLL, CYY} to give lattice path interpretations for certain flagged double Schur functions and flagged skew Schubert polynomials (though the interpretations there primarily yield formulas in terms of complete homogeneous symmetric polynomials rather than elementary symmetric polynomials).

The organization of this paper is as follows: In \S2, we will discuss background information on permutations, Schubert polynomials, and standard elementary monomials, as well as define lattice path representations for polynomials. We will also discuss how these lattice path models apply to the context of quantum Schubert polynomials. In \S3, we will discuss various operations for manipulating lattice path representations. In \S4, we will use the operations in \S3 to first prove a special case regarding $1324$-avoiding separable permutations and then build on this case to prove our main result in Theorem~\ref{thm:main}. We will conclude in \S5 with some remaining open questions.

\section{Background}

In this section, we will introduce necessary background about permutations, Schubert polynomials, standard elementary monomials, and nonintersecting lattice paths. For more information, see, for instance, \cite{Manivel}.


\subsection{Permutations}

Let $S_n$ denote the symmetric group of permutations on $[n] = \{1, \dots, n\}$. We will often denote a permutation $w\in S_n$ in one-line notation $w=w_1w_2 \cdots w_n$.

The simple transpositions $s_i = (i \;\; i+1)$ for $i = 1, \dots, n-1$ generate the group $S_n$. For a permutation $w \in S_n$, its \emph{length} $\ell(w)$ is the length of the shortest expression for $w$ as a product of simple transpositions $s_{i_1} \dots s_{i_\ell}$ (called a \emph{reduced expression}). Alternatively, $\ell(w)$ is the number of \emph{inversions} of $w$, where an inversion is an ordered pair $(w_i,w_j)$ satisfying $j>i$ and $w_j < w_i$. We denote by $1_n$ the identity permutation in $S_n$, and we denote by $w_0 = w_0^{(n)}$ the permutation $n (n-1)\cdots 1 \in S_n$ of maximum length in $S_n$.

The \emph{(Lehmer) code} of a permutation $w \in S_n$ is the sequence $c=(c_1, \dots, c_n)$, where $c_i = \#\{j > i \mid w_j < w_i\}$. The map from $w \in S_n$ to its code $c$ is a bijection from $S_n$ to the set of integer vectors $(c_1, \dots, c_n)$ satisfying $0 \leq c_i \leq n-i$ for all $i$.

For a permutation $w$, we say that $w_i$ is a \emph{left-to-right maximum} of $w$ if $w_j < w_i$ for all $j < i$.

Sometimes it will be convenient to consider the direct limit $S_\infty$ of symmetric groups under the natural embeddings $\iota\colon S_n \hookrightarrow S_{n+1}$ in which $S_n$ acts on the first $n$ letters. Equivalently, any element $w \in S_\infty$ is a permutation of $\NN = \{1, 2, \dots\}$ that fixes all but finitely many elements.

\subsubsection{Pattern avoidance}
Given a permutation (or \emph{pattern}) $p = p_1 \cdots p_k \in S_k$, we say that a permutation $w \in S_n$ \emph{contains} the pattern $p$ if $w$ has a subsequence in the same relative order as $p$, that is, if there exist $i_1 < i_2 < \dots < i_k$ such that $w_{i_a} < w_{i_b}$ if and only if $p_{i_a} < p_{i_b}$. We say that $w$ \emph{avoids} $p$ if $w$ does not contain the pattern $p$. We will sometimes abuse terminology and refer to either $p \in S_k$ or $w_{i_1}\cdots w_{i_k}$ as being a pattern of $w$.

A permutation $w \in S_n$ is called \emph{dominant} if it avoids the pattern $132$. Equivalently, a permutation is dominant if and only if its code is nonincreasing, that is, $c_1 \geq c_2 \geq \dots \geq c_n$.

\subsubsection{Direct and skew sum}
The following two operations can be used to combine permutations.

\begin{Def} The \emph{direct sum} of permutations $u \in S_m$ and $v \in S_n$ is the permutation $u \oplus v \in S_{m+n}$ defined by
	\[(u \oplus v)(i) = \begin{cases} u(i) &\text{ if } i \leq m, \\ 
		v(i-m)+m &\text{ if } i > m.\end{cases}\]
The \emph{skew sum} of $u \in S_m$ and $v \in S_n$ is the permutation $u \ominus v \in S_{m+n}$ defined by
	\[(u \ominus v)(i) = \begin{cases} u(i)+n &\text{ if } i \leq m, \\ 
		v(i-m) &\text{ if } i > m. \end{cases}\]
\end{Def}

\begin{Def}
	A permutation is called \emph{separable} if it can be built from copies of the permutation $1 \in S_1$ using only direct sum and skew sum operations. 
\end{Def}
In \cite{BBL}, it was shown that separable permutations can alternatively be described as those that avoid the patterns $2413$ and $3142$.

\subsection{Schubert polynomials}

The symmetric group $S_n$ acts on $\ZZ[x_1,...,x_n]$ in a natural way by permuting variables. For instance, if $f \in \ZZ[x_1, \dots, x_n]$, then $s_if$ is the polynomial obtained by switching $x_i$ and $x_{i+1}$ in $f$.

For $i = 1, \dots, n-1$, the \emph{divided difference operator} $\del_i$ is defined by 
\[\del_{i}f=\frac{1-s_{i}}{x_i-x_{i+1}}f = \frac{f-s_if}{x_i-x_{i+1}}\]
for all $f \in \ZZ[x_1, \dots, x_n]$. If $w = s_{i_1} \cdots s_{i_\ell}$ is a reduced expression, then we define $\del_w = \del_{i_1} \cdots \del_{i_\ell}$ (which is independent of the reduced expression).

%
%

The \emph{Schubert polynomials} $\mathfrak{S}_w$ for $w \in S_n$ can be defined recursively as follows: for the long word $w_0\in S_n$, $\mathfrak S_{w_0} = x_1^{n-1}x_2^{n-2} \cdots x_{n-1}$. Otherwise, 
\[\mathfrak S_{ws_i} = \del_i(\mathfrak S_w) \qquad \text{if $\ell(ws_i)<\ell(w)$}.\]
Equivalently, $\mathfrak S_w = \del_{w^{-1}w_0}(x_1^{n-1}x_2^{n-2} \cdots x_{n-1})$ for all $w \in S_n$.

Schubert polynomials are stable under the natural embeddings $\iota \colon S_n \hookrightarrow S_{n+1}$, which implies that $\mathfrak S_w$ is well-defined for any $w \in S_\infty$. The set $\{\mathfrak S_w \mid w \in S_\infty\}$ forms a basis for the polynomial ring $\ZZ[x_1, x_2, \dots]$ called the \emph{Schubert basis}.

The expansion of any Schubert polynomial in terms of monomials has nonnegative coefficients. One combinatorial interpretation for these coefficients is as follows (see \cite{BB, BJS, FK} for more details).

A \emph{pipe dream} (or \emph{rc-graph}) is a type of wiring diagram in which each box $(i,j)$ with $i,j \geq 1$ (indexed using matrix conventions) contains either a cross or a pair of elbows. (See Figure~\ref{fig:4132}.) A pipe dream corresponds to the permutation $w \in S_\infty$ if the wire that enters at the left of row $i$ exits at the top of column $w_i$. A pipe dream is called \emph{reduced} if no two wires cross more than once. 

Every reduced pipe dream for $w$ contains exactly $\ell(w)$ crosses. Assign to each cross the weight $x_i$ if it occurs in row $i$, and define the weight of the pipe dream to be the product of the weights of its crosses. Then $\mathfrak S_w$ is the sum of the weights of all reduced pipe dreams for $w$.

\begin{Ex}\label{ex:4132} Let $w=4132$. Figure~\ref{fig:4132} shows the two reduced pipe dream corresponding to $w$. Hence $\mathfrak S_{4132} = x_1^3x_2+x_1^3x_3$.
\end{Ex}

\begin{figure}
\begin{tikzpicture}[color = black, line width = 1.3pt, scale=.8]

\draw[step = 1, gray, very thin] (-1.9,-2.9) grid (2.9,1.9);

\draw (-.5,1.5) node (b1) {$1$};
\draw (.5,1.5) node (b2) {$2$};
\draw (1.5,1.5) node (b3) {$3$};
\draw (2.5,1.5) node (b4) {$4$};
\draw (-1.5,.5) node (a1) {$4$};
\draw (-1.5,-.5) node (a2) {$1$};
\draw (-1.5,-1.5) node (a3) {$3$};
\draw (-1.5,-2.5) node (a3) {$2$};

\draw (-.5,1) -- (-.5,0);
\draw (-1,.5) -- (0,.5);

\draw (.5,1) -- (.5,0);
\draw (0,.5) -- (1,.5);

\draw (1.5,1) -- (1.5,0);
\draw (1,.5) -- (2,.5);

\draw (-1,-.5) arc (-90:0:.5);
\draw (-.5,-1) arc (180:90:.5);

\draw (-1,-1.5) -- (0,-1.5);
\draw (-.5,-1) -- (-.5,-2);

\draw (0,-.5) arc (-90:0:.5);
\draw (.5,-1) arc (180:90:.5);

\draw (1,-.5) arc (-90:0:.5);

\draw (-1,-2.5) arc (-90:0:.5);

\draw (0,-1.5) arc (-90:0:.5);

\draw (2,.5) arc (-90:0:.5);


\begin{scope}[shift={(-6,0)}]
	\draw[step = 1, gray, very thin] (-1.9,-2.9) grid (2.9,1.9);
	
	\draw (-.5,1.5) node (b1) {$1$};
	\draw (.5,1.5) node (b2) {$2$};
	\draw (1.5,1.5) node (b3) {$3$};
	\draw (2.5,1.5) node (b4) {$4$};
	\draw (-1.5,.5) node (a1) {$4$};
	\draw (-1.5,-.5) node (a2) {$1$};
	\draw (-1.5,-1.5) node (a3) {$3$};
	\draw (-1.5,-2.5) node (a3) {$2$};
	
	\draw (-.5,1) -- (-.5,0);
	\draw (-1,.5) -- (0,.5);
	
	\draw (.5,1) -- (.5,0);
	\draw (0,.5) -- (1,.5);
	
	\draw (1.5,1) -- (1.5,0);
	\draw (1,.5) -- (2,.5);
	
	\draw (-1,-.5) arc (-90:0:.5);
	\draw (-.5,-1) arc (180:90:.5);
	
	\draw (-1,-1.5) arc (-90:0:.5);
	\draw (-.5,-2) arc (180:90:.5);
	
	\draw (0,-.5) -- (1,-.5);
	\draw (.5,0) -- (.5,-1);
	
	\draw (1,-.5) arc (-90:0:.5);

	\draw (-1,-2.5) arc (-90:0:.5);
	
	\draw (0,-1.5) arc (-90:0:.5);
	
	\draw (2,.5) arc (-90:0:.5);
	
\end{scope}
\end{tikzpicture}
\caption{The two reduced pipe dreams for $4132$. (In this diagram, only the first four pipes are drawn; all other pipes consist only of elbows.)}
\label{fig:4132}
\end{figure}
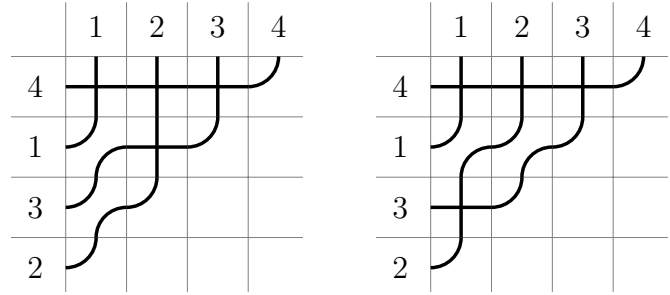

One special case of Schubert polynomials occurs when $w$ is a dominant (132-avoiding) permutation. In this case, $\mathfrak S_w$ is the monomial $x_1^{c_1}x_2^{c_2} \cdots$, where $(c_1, c_2, \dots)$ is the code of $w$.

Another special case occurs when $w$ is a \emph{Grassmannian permutation} satisfying $w_1 < w_2 < \cdots < w_r$ and $w_{r+1} < w_{r+2} < \cdots < w_n$ for some $r$. In this case, $\mathfrak S_w$ is a symmetric polynomial in $x_1, \dots, x_r$ called a \emph{Schur polynomial} $s_\lambda(x_1, \dots, x_r)$, where $\lambda$ is the partition $(w_r-r, w_{r-1}-(r-1), \dots, w_1-1)$.

The following proposition describes how Schubert polynomials behave under direct sum and skew sum.

\begin{Prop} \label{prop:schubertsum}
		Let $u \in S_m$ and $v \in S_n$. Then:
		\begin{enumerate}[(a)]
			\item $\mathfrak{S}_{u \oplus v} = \mathfrak{S}_{u} \cdot \mathfrak{S}_{1_m \oplus v}$, and
			\item $\mathfrak{S}_{u \ominus v} = \mathfrak S_u \cdot \mathfrak S_{1_m \ominus v} = \mathfrak S_u \cdot (x_1\cdots x_m)^n \cdot \mathfrak S_v(x_{m+1}, \dots, x_{m+n})$.
		\end{enumerate}
\end{Prop}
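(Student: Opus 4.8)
The plan is to prove (a) and (b) by related but distinct arguments. Both rest on two elementary Leibniz-type identities for divided differences: if $g$ is symmetric in $x_i$ and $x_{i+1}$ then $\del_i(fg)=(\del_i f)\,g$, and if $f$ involves neither $x_i$ nor $x_{i+1}$ then $\del_i(fg)=f\,(\del_i g)$. Iterating along a reduced word, if $w$ lies in the subgroup generated by $\{s_j:j\in J\}$, then $\del_w(fg)=(\del_w f)\,g$ whenever $g$ is symmetric in $x_j,x_{j+1}$ for every $j\in J$, and $\del_w(fg)=f\,(\del_w g)$ whenever $f$ involves no variable $x_j$ or $x_{j+1}$ with $j\in J$. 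I will also use: $\mathfrak S_u\in\ZZ[x_1,\dots,x_{m-1}]$ for $u\in S_m$ (immediate from the pipe dream model, since a cross in row $i$ of a reduced pipe dream for $u$ forces $i\le m-1$); the recursion $\del_i\mathfrak S_w=\mathfrak S_{ws_i}$ if $\ell(ws_i)<\ell(w)$ and $\del_i\mathfrak S_w=0$ otherwise, so in particular $\mathfrak S_w$ is symmetric in $x_i,x_{i+1}$ when $w_i<w_{i+1}$; and the elementary facts $\ell(u\oplus v)=\ell(u)+\ell(v)$, $(u\oplus v)s_i=(us_i)\oplus v$ for $i<m$, $(u\oplus v)s_{m+i}=u\oplus(vs_i)$ for $i<n$, and $(u\ominus v)s_i=(us_i)\ominus v$ for $i<m$.

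For (a), I would induct on $\ell(u)+\ell(v)$ (the case $0$ being trivial) and show that $\Delta:=\mathfrak S_{u\oplus v}-\mathfrak S_u\cdot\mathfrak S_{1_m\oplus v}$ is annihilated by every $\del_i$. Because $1_m\oplus v$ is increasing on its first $m$ values, $\mathfrak S_{1_m\oplus v}$ is symmetric in $x_i,x_{i+1}$ for all $i<m$, while $\mathfrak S_u$ involves no $x_j$ with $j\ge m$; so the Leibniz identities give $\del_i(\mathfrak S_u\cdot\mathfrak S_{1_m\oplus v})=(\del_i\mathfrak S_u)\cdot\mathfrak S_{1_m\oplus v}$ for $i<m$ and $=\mathfrak S_u\cdot\del_i\mathfrak S_{1_m\oplus v}$ for $i\ge m$. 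Comparing this with $\del_i\mathfrak S_{u\oplus v}$ using the multiplication facts above and the inductive hypothesis for the pairs $(us_i,v)$ or $(u,vs_{i-m})$, one checks that the two sides agree for every $i$, including the boundary index $i=m$, where both vanish because $u\oplus v$ and $1_m\oplus v$ are increasing at position $m$. Thus $\del_i\Delta=0$ for all $i$. Writing $\Delta=\sum_w a_w\mathfrak S_w$ and using that $w\mapsto ws_i$ is injective on $\{w:\ell(ws_i)<\ell(w)\}$, the identities $\del_i\Delta=0$ force $a_w=0$ for every $w$ with a descent; running over all $i$ leaves only $a_1\mathfrak S_1$, which also vanishes since $\Delta$ is homogeneous of degree $\ell(u)+\ell(v)\ge 1$. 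Hence $\Delta=0$.

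For (b) the boundary index $i=m$ obstructs this induction, so I would instead compute
\[
\mathfrak S_{u\ominus v}=\del_{(u\ominus v)^{-1}w_0^{(N)}}\bigl(x_1^{N-1}x_2^{N-2}\cdots x_{N-1}\bigr),\qquad N=m+n,
\]
and observe (by a one-line-notation check) that $(u\ominus v)^{-1}w_0^{(N)}=(u^{-1}w_0^{(m)})\oplus(v^{-1}w_0^{(n)})$ is a \emph{direct} sum. By length additivity of direct sums, a reduced word for it is a concatenation of a reduced word for $u^{-1}w_0^{(m)}$ (using only $s_1,\dots,s_{m-1}$) with one for $1_m\oplus(v^{-1}w_0^{(n)})$ (using only $s_{m+1},\dots,s_{m+n-1}$), so $\del_{(u\ominus v)^{-1}w_0^{(N)}}=\del_{u^{-1}w_0^{(m)}}\circ\del_{1_m\oplus(v^{-1}w_0^{(n)})}$. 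Now factor the staircase monomial as
\[
x_1^{N-1}x_2^{N-2}\cdots x_{N-1}=\bigl((x_1\cdots x_m)^n\,\mathfrak S_{w_0^{(m)}}(x_1,\dots,x_m)\bigr)\cdot\mathfrak S_{w_0^{(n)}}(x_{m+1},\dots,x_{m+n}).
\]
Applying $\del_{1_m\oplus(v^{-1}w_0^{(n)})}$ first: the left factor passes through, since it involves no $x_j$ with $j\ge m+1$; and after relabelling $x_{m+k}\mapsto x_k$ this operator acts on the right factor exactly as $\del_{v^{-1}w_0^{(n)}}$ on $\mathfrak S_{w_0^{(n)}}$, producing $\mathfrak S_v(x_{m+1},\dots,x_{m+n})$. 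Then applying $\del_{u^{-1}w_0^{(m)}}$: the factor $\mathfrak S_v(x_{m+1},\dots,x_{m+n})$ passes through (no $x_j$ with $j\le m$), the factor $(x_1\cdots x_m)^n$ passes through (symmetric in $x_1,\dots,x_m$), and $\del_{u^{-1}w_0^{(m)}}\mathfrak S_{w_0^{(m)}}=\mathfrak S_u$. This yields $\mathfrak S_{u\ominus v}=\mathfrak S_u\cdot(x_1\cdots x_m)^n\cdot\mathfrak S_v(x_{m+1},\dots,x_{m+n})$; specializing to $u=1_m$ gives $\mathfrak S_{1_m\ominus v}=(x_1\cdots x_m)^n\,\mathfrak S_v(x_{m+1},\dots,x_{m+n})$, hence also the middle expression.

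The step I expect to demand the most care is the combinatorial bookkeeping in (b): confirming the one-line identity $(u\ominus v)^{-1}w_0^{(N)}=(u^{-1}w_0^{(m)})\oplus(v^{-1}w_0^{(n)})$ and that the concatenated word is genuinely reduced (equivalently, $\ell\bigl((u^{-1}w_0^{(m)})\oplus(v^{-1}w_0^{(n)})\bigr)=\ell(u^{-1}w_0^{(m)})+\ell(v^{-1}w_0^{(n)})$, which is the length additivity for a direct sum). For (a), the only delicate point is the concluding ``annihilated by every $\del_i$ and homogeneous of positive degree, hence zero'' argument; the case check over $i$ is routine.
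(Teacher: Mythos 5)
Your argument is correct, but it is a genuinely different proof from the one in the paper. The paper disposes of both parts in a few lines using the pipe dream model: for (a), a reduced pipe dream for $u\oplus v$ has its first $m$ pipes strictly above the last $n$ and therefore factors uniquely into a reduced pipe dream for $u$ and one for $1_m\oplus v$; for (b), every reduced pipe dream for $u\ominus v$ is forced to have crosses in the first $n$ boxes of the first $m$ rows (contributing the factor $(x_1\cdots x_m)^n$), with the remainder splitting into shifted pipe dreams for $u$ and for $v$. Your route instead works entirely from the divided-difference recursion: part (a) by induction on $\ell(u)+\ell(v)$ together with the observation that a homogeneous polynomial of positive degree annihilated by every $\del_i$ vanishes, and part (b) by the identity $(u\ominus v)^{-1}w_0^{(m+n)}=(u^{-1}w_0^{(m)})\oplus(v^{-1}w_0^{(n)})$ and a factorization of the staircase monomial; I checked both the one-line-notation identity and the reducedness of the concatenated word, and they hold. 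The trade-off is that the paper's proof is shorter and immediately bijective, while yours is essentially self-contained from the recursive definition of $\mathfrak S_w$ and does not rely on the pipe dream theorem of Billey--Jockusch--Stanley and Fomin--Kirillov --- except for the one place where you invoke pipe dreams to see that $\mathfrak S_u\in\ZZ[x_1,\dots,x_{m-1}]$ for $u\in S_m$; if you want the argument fully independent of that model, you can instead note that $\mathfrak S_u=\del_{u^{-1}w_0^{(m)}}(x_1^{m-1}\cdots x_{m-1})$ and that each $\del_i$ with $i\le m-1$ preserves $\ZZ[x_1,\dots,x_{m-1}]$.
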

\begin{proof}
	For (a), any reduced pipe dream for $u \oplus v$ must have the first $m$ pipes lying strictly above the last $n$ pipes. Thus such a pipe dream can be factored uniquely into a reduced pipe dream for $u$ and (by replacing the first $m$ pipes with the identity pipe dream containing only elbows) a reduced pipe dream for $1_m \oplus v$.
	
	For (b), any reduced pipe dream for $u \ominus v$ must have crosses in the first $n$ boxes of the first $m$ rows. The remaining part consists of a reduced pipe dream for $u$ (shifted to the right by $n$) and a reduced pipe dream for $v$ (shifted down by $m$). The result follows easily.
\end{proof}

\subsection{Standard elementary monomials} \label{sec:sem}

For integers $j$ and $k$ with $k \geq 0$, denote by 
\[e_j^{(k)} = \sum_{1 \leq i_1 < \dots < i_j \leq k} x_{i_1} \cdots x_{i_j}\]
the $j$th \emph{elementary symmetric polynomial} in $x_1, \dots, x_k$. (By convention, $e_j^{(k)} = 1$ for $j=0$, while $e_j^{(k)} = 0$ if $j > k$ or $j < 0$.) Note that $e_j^{(k)}$ is symmetric in $x_i$ and $x_{i+1}$ for all $i \neq k$.

Let $L$ be the set of sequences of integers $(j_1, j_2, \dots)$ satisfying $0 \leq j_k \leq k$ for which all but finitely many of the $j_k$ vanish. (We will sometimes omit trailing zeroes from such sequences for convenience.) Then for any $(j_1, j_2, \dots) \in L$ we define the \emph{standard elementary monomial} $e_{j_1j_2\cdots}$ to be the polynomial
\[e_{j_1j_2 \cdots} = \prod_{k \geq 1} e_{j_k}^{(k)}.\] (Note that all but finitely many terms in the product are $1$.)

It was shown in \cite{FGP} that as $(j_1, j_2, \dots)$ ranges over all sequences in $L$, the standard elementary monomials $e_{j_1 j_2\cdots}$ form a basis for the polynomial ring $\ZZ[x_1, x_2, \dots]$, which we call the \emph{SEM basis}. 
(Though we will not need it here, each standard elementary monomial has nonnegative coefficients when expanded in the Schubert basis, as determined by the Pieri rule for Schubert polynomials---see, for instance, \cite{LS}.)

Given a permutation $w \in S_n$, consider the expansion of the corresponding Schubert polynomial in the SEM basis
\[\mathfrak S_w = \sum \alpha_{j_1j_2\cdots j_{n-1}} e_{j_1j_2\cdots j_{n-1}}.\]
Most notably, this expansion appears in the study of quantum Schubert calculus: Fomin, Gelfand, and Postnikov \cite{FGP} define the \emph{quantum Schubert polynomial} as
\[\mathfrak S^q_w = \sum \alpha_{j_1j_2 \cdots j_{n-1}} E_{j_1j_2 \cdots j_{n-1}},\]
where $E_{j_1j_2 \cdots j_{n-1}} = \prod_k E_{j_k}^{(k)}$ is a product of \emph{quantum elementary polynomials} $E_{j}^{(k)} = E_{j}(x_1, \dots, x_k)$ defined by
\[\det(I + \lambda G_k) = \sum_{j=0}^k E^{(k)}_j \lambda^j, \quad \text{where} \quad G_k = \begin{bmatrix}
	x_1&q_1&0&\cdots&0\\
	-1&x_2&q_2&\cdots&0\\
	0&-1&x_3&\cdots&0\\
	\vdots&\vdots&\vdots&\ddots&\vdots\\
	0&0&0&\cdots&x_k
\end{bmatrix}.\]
Hence any formula for the SEM expansion of Schubert polynomials may also be thought of as a formula for quantum Schubert polynomials. See \cite{FGP} for further background on quantum Schubert polynomials and their role in the quantum cohomology of the flag variety.

In \cite[Section 17]{PS}, it is shown that the coefficients $\alpha_{j_1j_2\cdots}$ are also the entries in the \emph{inverse Schubert-Kostka matrix} expressing monomials in terms of the Schubert basis. In general, computational evidence suggests that most of the $\alpha_{j_1j_2\cdots}$ are small in absolute value. For instance, all such coefficients have absolute value at most $1$ when $n \leq 6$---see Winkel \cite{Winkel} for more observation and discussion about these coefficients.

\subsection{Nonintersecting lattice paths}

A key result for finding determinantal formulas is the following Lindstr\"om-Gessel-Viennot lemma \cite{GV, Lindstrom}.

Let $G = (V,E)$ be a locally finite acyclic directed graph, and suppose that each edge $e \in E$ is assigned an edge weight $w_e$ (lying in some commutative ring). For any path in $G$, we define its weight to be the product of the weights of all edges in the path. For any two vertices $a$ and $b$, we will write $e(a,b)$ for the total weight of all directed paths from $a$ to $b$.

Let $A = \{a_1, \dots, a_k\}$ and $B = \{b_1, \dots, b_k\}$ be subsets of $V$ of the same size. A collection of \emph{nonintersecting paths} $P=(P_1, \dots, P_k)$ from $A$ to $B$ is a sequence of vertex-disjoint paths such that, for some permutation $\sigma \in S_k$, $P_i$ is a directed path from $a_i$ to $b_{\sigma(i)}$ for all $i$. Denote by $\paths(A,B)$ the set of all such $P$. For any $P \in \paths(A,B)$, we will write $\sigma(P)$ for the corresponding permutation $\sigma$ and $w(P)$ for the product of the weights of paths in $P$.

\begin{Lemma}[Lindstr\"om-Gessel-Viennot] \label{LGV}
	Let $G$, $A$, and $B$ be defined as above. Then the signed sum of the weights of all collections of nonintersecting paths from $A$ to $B$ is given by the determinant
	\[\sum_{P \in \paths(A,B)} \operatorname{sgn}(\sigma(P)) \cdot w(P) = \det (e(a_i, b_j))_{i,j=1}^k.\]
\end{Lemma}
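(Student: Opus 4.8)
The plan is to prove this by the classical Lindstr\"om--Gessel--Viennot argument: expand the determinant by the Leibniz formula and exhibit a sign-reversing, weight-preserving involution that cancels, in pairs, all terms coming from tuples of paths that fail to be vertex-disjoint.

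First I would write
\[\det(e(a_i,b_j))_{i,j=1}^k = \sum_{\sigma \in S_k} \operatorname{sgn}(\sigma) \prod_{i=1}^k e(a_i, b_{\sigma(i)}).\]
Since $G$ is locally finite and acyclic, each $e(a_i,b_j)$ is a well-defined (finite) sum of path weights, so expanding the product gives $\prod_i e(a_i,b_{\sigma(i)}) = \sum_{P} w(P)$, where $P=(P_1,\dots,P_k)$ ranges over all tuples of directed paths, \emph{not necessarily vertex-disjoint}, with $P_i$ running from $a_i$ to $b_{\sigma(i)}$; note that acyclicity forces each $P_i$ to be a simple path. Writing $\paths^\ast(A,B)$ for the set of all such tuples (over all $\sigma$) and $\sigma(P)$ for the associated permutation, the right-hand side of the lemma becomes $\sum_{P \in \paths^\ast(A,B)} \operatorname{sgn}(\sigma(P))\, w(P)$. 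As the vertex-disjoint tuples are precisely the elements of $\paths(A,B)$, it suffices to show that the non-disjoint tuples contribute $0$.

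To that end I would define an involution $\iota$ on the set of non-disjoint tuples. Given such a $P$, let $i$ be the least index for which $P_i$ meets some other path; traversing $P_i$ from $a_i$, let $v$ be the first vertex of $P_i$ that also lies on another path of $P$; and let $j$ be the least index with $j\neq i$ and $v\in P_j$ (necessarily $j>i$, by minimality of $i$). Now swap the tails at $v$: let $P_i'$ be the initial segment of $P_i$ from $a_i$ to $v$ followed by the final segment of $P_j$ from $v$ to $b_{\sigma(j)}$, let $P_j'$ be the initial segment of $P_j$ from $a_j$ to $v$ followed by the final segment of $P_i$ from $v$ to $b_{\sigma(i)}$, leave all other paths unchanged, and set $\iota(P) = (P_1', \dots, P_k')$. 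One checks that $P_i'$ and $P_j'$ are again simple paths (a shared vertex other than $v$ would, by acyclicity, lie on $P_i$ before $v$ and on $P_j$, contradicting the choice of $v$), that $\iota(P)$ is again non-disjoint (since $v\in P_i'\cap P_j'$), that $w(\iota(P)) = w(P)$ (the two tuples use exactly the same edges with the same multiplicities), and that $\sigma(\iota(P)) = \sigma(P)\cdot(i\,j)$, so $\operatorname{sgn}(\sigma(\iota(P))) = -\operatorname{sgn}(\sigma(P))$.

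The crux is checking that $\iota$ is an involution, i.e., that the triple $(i,v,j)$ recomputed from $\iota(P)$ equals the one computed from $P$. This follows once one observes that $P_1,\dots,P_{i-1}$ are untouched and still meet nothing (so $i$ is still the least ``bad'' index); that the segment of $P_i'$ from $a_i$ to $v$ coincides with that of $P_i$, and no intersection of $P_i'$ with another path of $\iota(P)$ can occur strictly before $v$ (again by the minimality defining $v$, using that the portion of $P_j'$ before $v$ is a sub-path of $P_j$); and that no path $P_{j'}$ with $i\neq j'<j$ was modified or passed through $v$. Hence $\iota^2 = \mathrm{id}$, and since $\sigma(i)\neq\sigma(j)$ the path $P_i'$ ends at a different vertex than $P_i$, so $\iota$ is fixed-point-free on non-disjoint tuples. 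A fixed-point-free, sign-reversing, weight-preserving involution cancels the non-disjoint terms in pairs, leaving
\[\sum_{P \in \paths^\ast(A,B)} \operatorname{sgn}(\sigma(P))\,w(P) = \sum_{P \in \paths(A,B)} \operatorname{sgn}(\sigma(P))\,w(P),\]
which is the claimed identity. I expect the main obstacle to be exactly this bookkeeping in the involution check; the rest is a routine unwinding of the Leibniz expansion.
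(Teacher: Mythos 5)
The paper does not prove this lemma at all: it is stated as a known result and attributed to Lindstr\"om and to Gessel--Viennot via the citations, so there is no in-paper argument to compare against. Your proof is the standard tail-swapping involution, and it is correct: the Leibniz expansion, the identification of the vertex-disjoint tuples with $\paths(A,B)$, and the cancellation of the remaining terms by a sign-reversing, weight-preserving, fixed-point-free involution are exactly the classical argument, and you correctly isolate and verify the one genuinely delicate point, namely that the triple $(i,v,j)$ is recomputed identically from $\iota(P)$ (using that the initial segment of $P_i'$ up to $v$ is unchanged, that the vertex set of the whole tuple is unchanged so indices below $i$ stay good, and that acyclicity keeps the spliced walks simple). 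One small caveat: local finiteness plus acyclicity does not by itself force each $e(a_i,b_j)$ to be a finite sum (one can build locally finite DAGs with infinitely many paths between two vertices), so strictly speaking the Leibniz expansion should be read as an identity of formal sums, or one should note that in the graph $G$ actually used in the paper the number of paths from $(a,0)$ to $(b,c)$ is finite; the involution itself is a bijective cancellation and is unaffected. This is a presentational point, not a gap in the combinatorial core of your argument.
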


In particular, if $\sigma(P)$ is the identity permutation for all $P$, then the left hand side is just the sum of the weights of all collections of nonintersecting paths.

One standard application of Lemma~\ref{LGV} is the (dual) \emph{Jacobi-Trudi identity}. 

\begin{Prop}[Dual Jacobi-Trudi] \label{prop:jacobitrudi}
	Let $\lambda$ be a partition with largest part $r$. Then the Schur polynomial $s_\lambda(x_1, \dots, x_n)$ is given by the determinant
	\[s_\lambda(x_1, \dots, x_n) = \det(e^{(n)}_{\lambda_i'+j-i})_{i,j=1}^r,\]
	where each entry is an elementary symmetric polynomial in $x_1, \dots, x_n$.
\end{Prop}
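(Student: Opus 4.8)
The plan is to express both sides as weighted sums over families of nonintersecting lattice paths and then invoke Lemma~\ref{LGV}. First I would build a locally finite acyclic directed graph $G$ on the vertex set $\ZZ \times \{0, 1, \dots, n\}$: for each $i = 1, \dots, n$ and each $a \in \ZZ$, include a vertical edge $(a, i-1) \to (a, i)$ of weight $1$ and a diagonal edge $(a, i-1) \to (a+1, i)$ of weight $x_{n+1-i}$. Any directed path from $(a, 0)$ to $(b, n)$ crosses each of the $n$ levels exactly once, taking a diagonal step at precisely $b - a$ of them; summing over which levels are diagonal, and using that an elementary symmetric polynomial depends only on the \emph{number} of variables involved, shows that the total weight of all directed paths from $(a,0)$ to $(b,n)$ is $e(a,b) = e^{(n)}_{b-a}$ (which is $0$ when $b - a < 0$ or $b - a > n$, consistent with our conventions).

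Next I would take $a_i = (i - \lambda_i', 0)$ and $b_j = (j, n)$ for $i, j = 1, \dots, r$, so that $e(a_i, b_j) = e^{(n)}_{j - (i - \lambda_i')} = e^{(n)}_{\lambda_i' + j - i}$, and the matrix in the statement is exactly $\bigl(e(a_i, b_j)\bigr)_{i,j=1}^r$. By Lemma~\ref{LGV}, its determinant equals $\sum_{P \in \paths(A, B)} \operatorname{sgn}(\sigma(P))\, w(P)$. Because $\lambda'$ is a partition, the first coordinates $i - \lambda_i'$ of the starting points are strictly increasing in $i$, and the first coordinates $j$ of the ending points are strictly increasing in $j$; since every edge of $G$ weakly increases the first coordinate, a family $P$ with $\sigma(P) \ne \mathrm{id}$ would contain two paths whose horizontal positions, tracked level by level, begin in one order and end in the opposite order, hence agree at some level — contradicting vertex-disjointness. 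So $\sigma(P) = \mathrm{id}$ for every $P \in \paths(A,B)$, all signs are $+1$, and the determinant equals $\sum_{P \in \paths(A,B)} w(P)$.

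Finally I would set up a weight-preserving bijection between $\paths(A, B)$ and the set of semistandard Young tableaux of shape $\lambda$ with entries in $[n]$. Given $P = (P_1, \dots, P_r) \in \paths(A,B)$, the path $P_i$ runs from $(i - \lambda_i', 0)$ to $(i, n)$ and therefore takes diagonal steps at exactly $\lambda_i'$ of the levels; replacing each such level $\ell$ by $n+1-\ell$ and listing the results in increasing order from top to bottom produces the $i$th column of a filling $T$ of the Young diagram of $\lambda$. Each column of $T$ is strictly increasing automatically, and the condition that $P_i$ and $P_{i+1}$ never meet translates precisely into the inequalities asserting that the rows of $T$ weakly increase, so $T$ ranges exactly over the semistandard tableaux; the diagonal-edge weights $x_{n+1-i}$ were chosen precisely so that $w(P)$ equals the monomial $x^T = \prod_c x_{T(c)}$, the product over the cells $c$ of $T$. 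Summing over all such $T$, the determinant equals $\sum_T x^T = s_\lambda(x_1, \dots, x_n)$, which is the assertion.

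I expect the last step to require the most care: verifying that vertex-disjointness of consecutive paths corresponds exactly to the row-weakly-increasing condition, and that the weights match under the bijection. This is also where the slightly unusual weight $x_{n+1-i}$ (rather than $x_i$) on the diagonal edge at level $i$ enters; one could instead use weight $x_i$ and recover the same conclusion by reversing the variables at the end, using that $s_\lambda$ is symmetric. The other ingredients — the identification $e(a,b) = e^{(n)}_{b-a}$, the matching of the $e(a_i,b_j)$ with the matrix entries, and the reduction to $\sigma(P) = \mathrm{id}$ — are routine.
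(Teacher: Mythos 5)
Your proposal is correct and follows essentially the same route as the paper: apply the Lindstr\"om--Gessel--Viennot lemma to a lattice-path graph whose path generating functions are the elementary symmetric polynomials $e^{(n)}_{b-a}$, check that only the identity connection type survives, and finish with a weight-preserving bijection between the nonintersecting path families and semistandard Young tableaux of shape $\lambda$ (the paper sets this up on its graph $G$ via Proposition~\ref{edet} with $a_i = n+i-\lambda_i'$, $b_j=j$, $c_j=n$, and defers the tableau bijection to \cite{GV}). Your edge-weight and orientation conventions are mirrored relative to the paper's (variables on diagonal rather than vertical steps), but this is cosmetic and the argument is sound.
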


(Here, $\lambda'$ is the conjugate partition to $\lambda$, so that for any positive integer $i$, $\lambda'_i = \#\{j \mid \lambda_j \geq i\}$.)

The proof of this result involves applying Lemma~\ref{LGV} on the following graph. Let $G$ have vertex set $\ZZ \times \ZZ_{\geq 0}$---by convention, we will draw the positive $x$-axis in the rightward direction and the positive $y$-axis in the vertical direction. Whenever both endpoints lie in $G$, add a directed edge from $(a,b)$ to $(a,b+1)$ of weight $x_{b+1}$ (which we call an ``upstep''), as well as a directed edge from $(a,b)$ to $(a-1,b+1)$ of weight $1$ (which we call a ``diagonal step''). See Figure~\ref{fig:graph}.

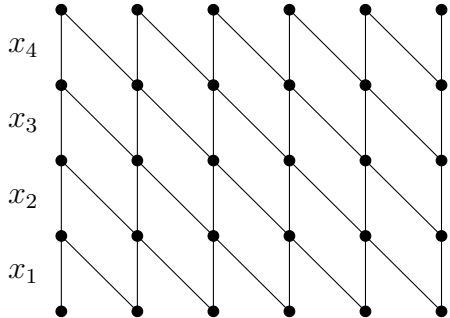
\begin{figure}
	\begin{tikzpicture}
		\foreach \x in {0,...,5}
			\foreach \y in {0,...,4}
				\node[v,fill=black] at (\x,\y){};
		\foreach \x in {0,...,5}
			\foreach \y in {0,...,3}
				\draw (\x,\y)--(\x,\y+1);
		\foreach \x in {1,...,5}
			\foreach \y in {0,...,3}
				\draw (\x,\y)--(\x-1,\y+1);
		\foreach \y in {1,...,4}
			\node at (-.5,\y-.5) {$x_\y$};
	\end{tikzpicture}
	\caption{A portion of the graph $G$ on $\ZZ \times \ZZ_{\geq 0}$. All vertical edges are directed up, weighted according to height as shown, while all diagonal edges are directed up with weight $1$.}
	\label{fig:graph}
\end{figure}

Observe that any directed path from $(a,0)$ to $(b,c)$ must use $a-b$ diagonal steps and $c+b-a$ upsteps. Moreover, each upstep must occur at a different one of the $c$ possible heights. It follows that $e((a,0),(b,c)) = e^{(c)}_{c+b-a}$. Applying Lemma~\ref{LGV} then immediately implies the following result.

\begin{Prop} \label{edet}
	Let $G$ be defined as above, and let 
	\begin{align*}
		A &= \{(a_1,0), (a_2,0), \dots, (a_k,0)\},\\
		B &= \{(b_1, c_1), (b_2, c_2), \dots (b_k, c_k)\}.
	\end{align*}
	Then
	\[\sum_{P \in \paths(A,B)} \operatorname{sgn}(\sigma(P)) \cdot  w(P) = \det (e^{(c_j)}_{c_j+b_j-a_i})_{i,j=1}^k,\]
	where $\paths(A,B)$ is the set of all collections of nonintersecting paths from $A$ to $B$.
\end{Prop}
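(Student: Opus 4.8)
The plan is to obtain this as an immediate consequence of the Lindström–Gessel–Viennot lemma (Lemma~\ref{LGV}) once the single-path generating functions $e(a,b)$ on the graph $G$ are identified. First I would check that $G$ satisfies the hypotheses of Lemma~\ref{LGV}: every edge of $G$ strictly increases the $y$-coordinate, so $G$ is acyclic, and each vertex has at most two outgoing edges (an upstep and a diagonal step) and at most two incoming edges, so $G$ is locally finite. Hence Lemma~\ref{LGV} applies to any equicardinal pair of vertex sets, in particular to the sets $A$ and $B$ in the statement, yielding
\[\sum_{P \in \paths(A,B)} \operatorname{sgn}(\sigma(P)) \cdot w(P) = \det\bigl(e((a_i,0),(b_j,c_j))\bigr)_{i,j=1}^k.\]

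The heart of the argument is then the computation $e((a,0),(b,c)) = e^{(c)}_{c+b-a}$, which the preceding paragraph already sketches and which I would spell out as follows. Every edge of $G$ raises the $y$-coordinate by exactly $1$, so a directed path from $(a,0)$ to $(b,c)$ consists of exactly $c$ steps, one at each height $h = 1, \dots, c$, where the step ``at height $h$'' is the one from $y=h-1$ to $y=h$. At each height the step is either an upstep, which fixes the $x$-coordinate and has weight $x_h$, or a diagonal step, which lowers the $x$-coordinate by $1$ and has weight $1$. Recording the set $S \subseteq \{1, \dots, c\}$ of heights at which the path takes an upstep determines the path completely, and since the total drop in the $x$-coordinate is $a-b$ (contributed one unit at a time by the diagonal steps), this gives a bijection between directed paths from $(a,0)$ to $(b,c)$ and subsets $S$ with $|S| = c - (a-b) = c+b-a$. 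The weight of the path corresponding to $S$ is $\prod_{h \in S} x_h$, so summing over all such paths gives $\sum_{1 \le h_1 < \cdots < h_u \le c} x_{h_1} \cdots x_{h_u} = e^{(c)}_{u}$ with $u = c+b-a$, in agreement with the conventions that $e^{(c)}_u$ vanishes when $u<0$ or $u>c$ (precisely the cases where no such path exists, since then either the required number of diagonals is negative or exceeds $c$).

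Substituting $e((a_i,0),(b_j,c_j)) = e^{(c_j)}_{c_j+b_j-a_i}$ into the determinant above yields the claimed identity. I do not expect a genuine obstacle here: the only points requiring care are the routine verification of the hypotheses of Lemma~\ref{LGV} and the bookkeeping in the path-to-subset bijection — in particular keeping straight that the subscript of the resulting elementary symmetric polynomial is the number of upsteps $c+b-a$ rather than the number of diagonal steps, and that the number of variables involved is the target height $c$.
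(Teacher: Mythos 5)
Your proof is correct and follows the same route as the paper: compute the single-path generating function $e((a,0),(b,c)) = e^{(c)}_{c+b-a}$ by noting that each path takes exactly one step at each height with upsteps at a subset of heights of size $c+b-a$, then apply Lemma~\ref{LGV}. Your write-up is simply a more detailed version of the paper's argument, including the routine verification of the hypotheses of the lemma and the degenerate cases where no path exists.
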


\begin{Def}
	A polynomial $F$ has a \emph{lattice path representation} $(A,B)$ if
		\begin{align*}
		A &= \{(a_1,0), (a_2,0), \dots, (a_k,0)\},\\
		B &= \{(b_1, c_1), (b_2, c_2), \dots (b_k, c_k)\},
	\end{align*}
	and
	$F = \sum_{P \in \paths(A,B)} \operatorname{sgn}(\sigma(P)) \cdot w(P)$, where $P$ ranges over all collections of nonintersecting paths from $A$ to $B$ as in Proposition~\ref{edet}.
	
	The multiset of \emph{heights} of $(A,B)$ is $\{c_1, \dots, c_k\}$.
\end{Def}

\begin{Ex}
	Consider the Schubert polynomial $\mathfrak S_{4132}$ as in Example~\ref{ex:4132}. One can verify that
	\[
		\mathfrak S_{4132} = x_1^3x_2+x_1^3x_3
		=e_{112}-e_{103}-e_{022}
		= \left|\begin{matrix}
		e_1^{(1)}&e_2^{(2)}&0\\
		e_0^{(1)}&e_1^{(2)}&e_3^{(3)}\\
		0&e_0^{(2)}&e_2^{(3)}
		\end{matrix}\right|.
	\]
	This corresponds to the lattice path representation $(A,B)$ with
	\[A = \{(0,0), (1,0), (2,0)\}, \qquad B=\{(0,1), (0,2), (1,3)\}\]
	whose nonintersecting paths are depicted in Figure~\ref{fig:4132-2}.
\end{Ex}

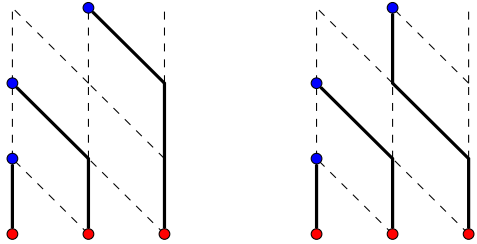
\begin{figure}
	\begin{tikzpicture}
		\foreach \x in {0,...,2}
		\foreach \y in {0,...,2}
		\draw[dashed] (\x,\y)--(\x,\y+1);
		\foreach \x in {1,...,2}
		\foreach \y in {0,...,2}
		\draw[dashed] (\x,\y)--(\x-1,\y+1);
		\node[v] (a1) at (0,0){};
		\node[v] (a2) at (1,0){};
		\node[v] (a3) at (2,0){};
		\node[w] (b1) at (0,1){};
		\node[w] (b2) at (0,2){};
		\node[w] (b3) at (1,3){};
		\draw[very thick] (a1)--(b1);
		\draw[very thick] (a2)--(1,1)--(b2);
		\draw[very thick] (a3)--(2,1)--(1,2)--(b3);
		
		\begin{scope}[shift={(-4,0)}]
			\foreach \x in {0,...,2}
			\foreach \y in {0,...,2}
			\draw[dashed] (\x,\y)--(\x,\y+1);
			\foreach \x in {1,...,2}
			\foreach \y in {0,...,2}
			\draw[dashed] (\x,\y)--(\x-1,\y+1);
			\node[v] (a1) at (0,0){};
			\node[v] (a2) at (1,0){};
			\node[v] (a3) at (2,0){};
			\node[w] (b1) at (0,1){};
			\node[w] (b2) at (0,2){};
			\node[w] (b3) at (1,3){};
			\draw[very thick] (a1)--(b1);
			\draw[very thick] (a2)--(1,1)--(b2);
			\draw[very thick] (a3)--(2,1)--(2,2)--(b3);
		\end{scope}
	\end{tikzpicture}
	\caption{Nonintersecting lattice paths for a lattice path representation of $\mathfrak S_{4132} = x_1^3x_2+x_1^3x_3$. (The points in $A$ and $B$ are given by red and blue nodes, respectively.)}
	\label{fig:4132-2}
\end{figure}

The order of the labelings of the points in $A$ and $B$ only affects $F$ up to a sign. Therefore we will often abuse notation slightly by considering $A$ and $B$ as unordered sets for ease of exposition. In most of the situations that we will consider, each collection $P$ of nonintersecting paths will have the same $\sigma(P)$, and so we can label the elements of $B$ so that $\sigma(P)$ is the identity.

The lattice path representation of a polynomial is not unique: for example, the constant polynomial $1$ can be represented by any pair $(A,B)$ such that $a_i = b_i+c_i$ for all $i$ (as the corresponding matrix will be upper triangular with $1$'s on the diagonal).

Given a determinantal expression whose entries are elementary symmetric polynomials that vary in the appropriate way, it is straightforward to find corresponding sets $A$ and $B$.

\begin{Ex}
	The dual Jacobi-Trudi identity involves a determinant whose $(i,j)$th entry is given by $e^{(n)}_{\lambda_i'+j-i}$. This can be obtained from Proposition~\ref{edet} by setting, for instance, $a_i = n+i-\lambda_i'$, $b_j = j$, and $c_j=n$.
	
	One can then give a weight-preserving bijection between $\paths(A,B)$ and, for instance, semistandard Young tableaux of shape $\lambda$ to deduce the dual Jacobi-Trudi identity: see \cite{GV}.
\end{Ex}

Observe that if the heights of a lattice path representation are distinct, then in the expansion of the determinant in Proposition~\ref{edet}, each term either vanishes or equals, up to sign, a standard elementary monomial. In addition, all of the nonzero terms obtained in this way will necessarily be distinct. Therefore when this occurs, this determinant can be thought of as a concise representation of the SEM expansion of the resulting polynomial. We will call such representations \emph{proper}.

\begin{Def}
	Let $\{c_1, \dots, c_k\}$ be the multiset of heights of a lattice path representation $(A,B)$. We say that $(A,B)$ is \emph{proper} if the $c_i$ are distinct.
\end{Def}

Our goal for most of the remainder of this paper is to investigate which Schubert polynomials have a proper lattice path representation.

\subsection{Quantization}

As a brief digression, we will first discuss a slight modification of these lattice path representations for computing quantum Schubert polynomials. (This section will not be needed for the remainder of this paper.)

As described in \S\ref{sec:sem}, the quantum Schubert polynomials $\mathfrak S_w^q$ are defined by computing the SEM expansion of $\mathfrak S_w$ and replacing each elementary polynomial $e_j^{(k)}$ with the quantum elementary polynomial $E_j^{(k)}$. In the event that $\mathfrak S_w$ has a proper lattice path representation and hence a determinantal formula for its SEM expansion by Proposition~\ref{edet}, it follows that $\mathfrak S_w^q$ is also expressible as a determinant whose entries are of the form $E_j^{(k)}$. In fact, there exists a simple modification to our underlying graph $G$ on $\ZZ \times \ZZ_{\geq 0}$ that yields the quantum elementary polynomials as weights.

Let $G^q$ be the graph on $\ZZ \times \ZZ_{\geq 0}$ with the same edges as $G$ as before but with additional edges from $(a,b)$ to $(a,b+2)$ of weight $q_{b+1}$. (Thus if we set all $q_i=0$, then the graph $G^q$ essentially reverts to the original graph $G$.)

\begin{Prop} \label{prop:gq}
	The total weight $e((a,0),(b,c))$ of all paths from $(a,0)$ to $(b,c)$ in $G^q$ is $E_{c+b-a}^{(c)}$.
\end{Prop}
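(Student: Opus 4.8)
The plan is to compute the generating function $e((a,0),(b,c))$ in $G^q$ directly and match it term-by-term against the expansion $\det(I+\lambda G_c)=\sum_j E^{(c)}_j\lambda^j$ defining the quantum elementary polynomials. First I would observe, exactly as in the classical case, that any directed path from $(a,0)$ to $(b,c)$ in $G^q$ moves strictly upward, one or two units per step, so it is determined by recording for each height $h\in\{1,\dots,c\}$ whether that height is an upstep (weight $x_h$), a diagonal step (weight $1$, shifting the $x$-coordinate left by one), or the \emph{top} of a $q$-step from height $h-1$ to height $h+1$ (weight $q_h$, not changing the $x$-coordinate). Thus a path corresponds to a choice of a subset $U$ of "upstep heights", a subset $Q$ of "$q$-step heights" with the heights in $Q$ pairwise non-adjacent and disjoint from $U$ (and from $U$ shifted, i.e. a $q$-step from $h-1$ to $h+1$ occupies heights $h-1$ and $h+1$), and the remaining heights being diagonal steps; the net leftward displacement is $|D|=a-b$ where $D$ is the set of diagonal heights, and the weight is $\prod_{h\in U}x_h\prod_{h\in Q}q_h$.

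Next I would identify this combinatorial sum with the cofactor expansion of $\det(I+\lambda G_c)$. Writing $M=I+\lambda G_c$, the matrix $M$ is tridiagonal: its diagonal entries are $1+\lambda x_i$, its superdiagonal entries are $\lambda q_i$, and its subdiagonal entries are $-\lambda$. Expanding $\det M$ as a sum over "non-attacking rook"-type configurations (equivalently, over ways of tiling the path $1$–$2$–$\cdots$–$c$ by single vertices carrying $1+\lambda x_i$ and dominoes carrying $(\lambda q_i)(-\lambda)=-\lambda^2 q_i$), one sees that the coefficient of $\lambda^j$ is precisely $\sum\prod_{h\in U}x_h\prod_{h\in Q}q_h$ where the sum is over disjoint $U$ and a non-adjacent domino set $Q$ with $|U|+2|Q|=j$. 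This is a standard continuant computation; I would phrase it via the recursion $E^{(c)}_j = x_c E^{(c-1)}_{j-1} + E^{(c-1)}_j + q_{c-1} E^{(c-2)}_{j-2}$ obtained by expanding the determinant along the last row/column.

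Finally I would match the two sides: a path from $(a,0)$ to $(b,c)$ in $G^q$ uses $c+b-a$ "up-type" steps in the sense that each upstep contributes $1$ and each $q$-step contributes $2$ to the total height gain of $c$, minus... more carefully: if there are $u$ upsteps, $d$ diagonal steps, and $t$ $q$-steps, then $u + d + 2t$ counts nothing directly, but height gives $u + d + 2t = c$ is wrong — rather the number of steps is $u+d+t$ and the height gain is $u+d+2t=c$, while the $x$-displacement gives $d = a-b$; hence $u+2t = c-(a-b) = c+b-a$, which is exactly the power of $\lambda$ isolating $E^{(c)}_{c+b-a}$. So the weight sum over all such paths is $E^{(c)}_{c+b-a}$, as claimed. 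The only real subtlety is bookkeeping the $q$-steps correctly as dominoes (ensuring the non-adjacency and the disjointness from upsteps is the same constraint on both sides), and being careful that $q$-steps do not alter the $x$-coordinate so that the displacement count $d=a-b$ is unaffected by $t$; I expect this bookkeeping — and specializing cleanly to recover Proposition~\ref{edet} when all $q_i=0$ — to be the main thing to get right, but it is routine once the domino picture is set up.
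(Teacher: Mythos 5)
Your argument is correct, and its primary form is a genuinely different route from the paper's. The paper proves the proposition purely by induction: expanding $\det(I+\lambda G_k)$ along its last column gives the recursion $E_j^{(k)}=E_j^{(k-1)}+x_kE_{j-1}^{(k-1)}+q_{k-1}E_{j-2}^{(k-2)}$, classifying the last step of a path gives the identical recursion for $e((a,0),(b,c))$, and the two sides share base cases. You instead set up an explicit weight-preserving bijection: paths correspond to monomer--domino tilings of $\{1,\dots,c\}$ (upsteps and diagonals are monomers, $q$-steps are dominoes), and the tridiagonal determinant $\det(I+\lambda G_c)$ expands as a sum over exactly these tilings; your bookkeeping $u+d+2t=c$, $d=a-b$, hence $u+2t=c+b-a$, correctly isolates the coefficient of $\lambda^{c+b-a}$. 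The bijective version buys a term-by-term combinatorial explanation of \emph{why} the identity holds, at the cost of more care with signs and adjacency constraints; the paper's inductive version is shorter and sidesteps that bookkeeping. One small slip to fix in your write-up: the domino covering $\{i,i+1\}$ contributes $\operatorname{sgn}\bigl((i\;i{+}1)\bigr)\cdot(\lambda q_i)(-\lambda)=+\lambda^2 q_i$, not $-\lambda^2 q_i$ --- you quoted only the product of the off-diagonal entries and omitted the sign of the transposition. The positive sign is what makes the tiling weights match the path weights (and is consistent with the $+q_{c-1}$ in the recursion you also state, which is exactly the paper's argument), so the conclusion is unaffected once this is corrected.
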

\begin{proof}
	Recall that $E_j^{(k)}$ is defined to be the coefficient of $\lambda^j$ in the expansion of $\det(I+\lambda G_k)$. Expanding the determinant along the last column of $I+\lambda G_k$ gives
	\[E_j^{(k)}=E_j^{(k-1)}+x_{k} E_{j-1}^{(k-1)} + q_{k-1} E_{j-2}^{(k-2)}.\]
	
	Similarly, any path in $G^q$ from $(a,0)$ ending at $(b,c)$ must come from $(b+1,c-1)$ with an edge of weight $1$, from $(b,c-1)$ with an edge of weight $x_{c}$, or from $(b,c-2)$ with an edge of weight $q_{c-1}$. Hence $e((a,0),(b,c))$ equals
	\[e((a,0),(b+1,c-1)) + x_{c}e((a,0),(b,c-1)) + q_{c-1}e((a,0),(b,c-2)).\]
	Since $e((a,0),(b,c))$ and $E_{c+b-a}^{(c)}$ satisfy the same base cases (equaling $1$ if $c+b-a=0$ and $0$ if $c+b-a<0$), the result follows easily by induction.
\end{proof}

The following corollary is then immediate.

\begin{Cor}
	Suppose $\mathfrak S_w$ has a proper lattice path representation $(A,B)$. Then
	\[\mathfrak S_w^q = \sum_{P \in \mathscr P^q(A,B)} \operatorname{sgn}(\sigma(P)) \cdot w(P),\]
	where $\mathscr P^q(A,B)$ is the set of all collections of nonintersecting paths from $A$ to $B$ in the graph $G^q$.
\end{Cor}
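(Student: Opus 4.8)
The plan is to chain together Proposition~\ref{edet}, Proposition~\ref{prop:gq}, and the Lindstr\"om--Gessel--Viennot lemma, with the properness hypothesis entering only to make a certain linear substitution well-defined term by term.

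First I would unwind the hypothesis. Writing $A = \{(a_1,0), \dots, (a_k,0)\}$ and $B = \{(b_1,c_1), \dots, (b_k,c_k)\}$, Proposition~\ref{edet} gives $\mathfrak S_w = \det(e^{(c_j)}_{c_j+b_j-a_i})_{i,j=1}^k$. Expanding this determinant by the Leibniz formula as $\sum_{\sigma \in S_k} \operatorname{sgn}(\sigma) \prod_{i=1}^k e^{(c_{\sigma(i)})}_{c_{\sigma(i)}+b_{\sigma(i)}-a_i}$, each summand is---because the heights $c_1, \dots, c_k$ are distinct---either zero or, up to sign, a single standard elementary monomial, and distinct $\sigma$ with nonvanishing contribution yield distinct standard elementary monomials. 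Thus the Leibniz expansion of this determinant is, literally term by term, the SEM expansion $\mathfrak S_w = \sum \alpha_{j_1j_2\cdots} e_{j_1j_2\cdots}$.

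Next I would apply the defining substitution $e_j^{(k)} \mapsto E_j^{(k)}$ for quantum Schubert polynomials. Since this substitution is extended linearly from the SEM basis, and since the determinant expansion above already presents $\mathfrak S_w$ as a signed sum of pairwise distinct standard elementary monomials, the substitution may be carried out directly inside the matrix entries: $\mathfrak S_w^q = \det(E^{(c_j)}_{c_j+b_j-a_i})_{i,j=1}^k$. (This is the single place properness is used: without distinct heights, a product $\prod_i e^{(c_{\sigma(i)})}_{\cdots}$ need not be a standard elementary monomial, so the term-by-term substitution would not be justified.) Then Proposition~\ref{prop:gq} identifies the $(i,j)$ entry $E^{(c_j)}_{c_j+b_j-a_i}$ with $e((a_i,0),(b_j,c_j))$ computed in $G^q$, so Lemma~\ref{LGV} applied to $G^q$ with the vertex sets $A$ and $B$ gives $\det(E^{(c_j)}_{c_j+b_j-a_i}) = \sum_{P \in \mathscr P^q(A,B)} \operatorname{sgn}(\sigma(P))\, w(P)$. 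Combining the two displays completes the argument.

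I do not expect a serious obstacle here; this is essentially a bookkeeping argument assembled from results already in hand. The only point meriting care is confirming that properness genuinely licenses the substitution $e_j^{(k)} \mapsto E_j^{(k)}$ inside the determinant---that is, checking that every nonvanishing diagonal product in the Leibniz expansion is one SEM basis element and that these are pairwise distinct. This holds precisely because the distinct heights $c_j$ index pairwise distinct ``slots'' $k$ in a standard elementary monomial $e_{j_1j_2\cdots} = \prod_k e_{j_k}^{(k)}$, so no two factors in such a product collide and no two products coincide.
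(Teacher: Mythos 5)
Your proposal is correct and follows essentially the same route as the paper, which simply says to combine Proposition~\ref{edet}, the definition of $\mathfrak S_w^q$, and Proposition~\ref{prop:gq} via Lemma~\ref{LGV}. Your elaboration of why properness licenses the entrywise substitution $e_j^{(k)} \mapsto E_j^{(k)}$ is exactly the point the paper leaves implicit (it appears in the discussion preceding the definition of a proper representation), so nothing is missing.
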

\begin{proof}
	Combine Proposition~\ref{edet} and the definition of $\mathfrak S_w^q$ with Proposition~\ref{prop:gq} using Lemma~\ref{LGV}.
\end{proof}

As we will see, a large class of permutations $w$ to which this corollary applies will be described by our main result, Theorem~\ref{thm:main}.

\section{Operations} 

In this section, we will describe several operations on lattice path representations that act predictably on the corresponding polynomials.

\begin{Prop} \label{prop:pull}
	Let $(A,B)$ be a lattice path representation of a polynomial $F$,
		and suppose $(b,c), (b+1,c) \in B$. Then $(A,B')$ is a lattice path representation for $F$, where $B'$ is formed by replacing $(b+1,c)$ with $(b,c+1)$ in $B$.
\end{Prop}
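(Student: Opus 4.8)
The plan is to compare the two collections of nonintersecting paths directly, exploiting the structure of the graph $G$ near the point $(b+1,c)$. Write $A = \{(a_1,0),\dots,(a_k,0)\}$, and suppose $B$ contains $(b,c)$ and $(b+1,c)$; let $B'$ be obtained by replacing $(b+1,c)$ with $(b,c+1)$. The key observation is that in the graph $G$, the only edges into the vertex $(b+1,c)$ are the upstep from $(b+1,c-1)$ (weight $x_c$) and the diagonal step from $(b+2,c-1)$ (weight $1$); meanwhile the vertex $(b,c+1)$ has incoming edges the upstep from $(b,c)$ (weight $x_{c+1}$) and the diagonal step from $(b+1,c)$ (weight $1$). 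So I would set up a weight-preserving, sign-preserving bijection $\Phi \colon \paths(A,B) \to \paths(A,B')$ that modifies each collection only in a small neighborhood of these points.

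First I would describe $\Phi$ on a collection $P \in \paths(A,B)$. Let $P_s$ be the path ending at $(b,c)$ and $P_t$ the path ending at $(b+1,c)$. Since the paths are vertex-disjoint and both end at height $c$ in adjacent columns, near the top they look like two parallel strands; in particular $P_s$ cannot pass through $(b,c+1)$ or use the edge into $(b+1,c)$ from $(b+1,c-1)$ without colliding with $P_t$. The idea is: extend $P_t$ by re-routing its final approach — if $P_t$ arrives at $(b+1,c)$ via the diagonal step from $(b+2,c-1)$, replace that step and append the upstep $(b+1,c)\to(b,c+1)$... but more cleanly, I would instead transfer the last edge of $P_t$ onto $P_s$: formally, remove from $P_t$ whatever happens after the last vertex it shares a column-pattern with $P_s$, and append an upstep to $P_s$ together with a diagonal step, arranging that the new path to $(b,c+1)$ uses exactly one more step than $P_t$ used to reach $(b+1,c)$. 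The cleanest route is to note that a path ending at $(b+1,c)$ and a path ending at $(b,c)$ are in bijection (preserving all earlier structure and multiplying total weight by the appropriate factor) with a path ending at $(b,c+1)$ and a path ending at $(b,c)$, by the standard "reflection at the top" argument used in proving Jacobi–Trudi: swap the tails of the two paths at their last common height, which turns endpoints $\{(b,c),(b+1,c)\}$ into $\{(b,c+1),(b,c)\}$ after one step is absorbed. The permutation $\sigma(P)$ changes by a transposition exactly when the tail-swap occurs, and this is compensated by a corresponding transposition in the labeling of $B$ versus $B'$, so $\operatorname{sgn}(\sigma(P))w(P)$ is preserved.

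The cleanest way to execute this, which I would actually adopt, is algebraic rather than bijective: by Proposition~\ref{edet} both sides are explicit determinants, and the claim becomes a column identity. Using the recurrence $e^{(c+1)}_{m} = e^{(c)}_{m} + x_{c+1} e^{(c)}_{m-1}$ for the column of $B'$ coming from $(b,c+1)$, one expands that column as a sum of two columns; the $x_{c+1}e^{(c)}_{m-1}$ term yields, after the substitution $m = (c+1)+b-a_i$, exactly a scalar multiple of the column indexed by $(b,c)$ (both have height-shifted index $c+b-a_i$ up to the constant), so that contribution is killed by column-dependence with the existing $(b,c)$ column; the remaining $e^{(c)}_{m}$ term is precisely the column that $(b+1,c)$ would have contributed, since $(c+1)+b-a_i = c+(b+1)-a_i$. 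Hence the two determinants are equal. The main obstacle is bookkeeping the signs and the ordering of $B$ versus $B'$ so that the determinants are literally equal and not merely equal up to sign — but since we have already agreed to treat $A$ and $B$ as unordered sets with the understanding that reordering only changes $F$ by a sign, and since here $B$ and $B'$ differ by replacing one element (no reordering of the others), the sign is unambiguous and the identity holds on the nose. I would present the algebraic version as the proof and perhaps remark on the bijective interpretation of "pulling" the endpoint from $(b+1,c)$ to $(b,c+1)$.
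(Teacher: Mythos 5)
Your proposal is correct: the algebraic argument you ultimately commit to---expanding the $(b,c+1)$ column via $e^{(c+1)}_m = e^{(c)}_m + x_{c+1}e^{(c)}_{m-1}$, noting that the $x_{c+1}e^{(c)}_{c+b-a_i}$ part is a multiple of the existing $(b,c)$ column and hence contributes nothing to the determinant, while the $e^{(c)}_{c+(b+1)-a_i}$ part is exactly the $(b+1,c)$ column---is precisely the alternative proof the paper gives immediately after the proposition (phrased there as the column operation of adding $x_{c+1}$ times one column to the other). The paper's primary proof is instead a one-line bijection which your first sketch circles around but never cleanly lands on: a path in $\paths(A,B')$ ending at $(b,c+1)$ cannot pass through $(b,c)$ (that vertex is occupied by another path's endpoint), so it must end with the weight-$1$ diagonal step from $(b+1,c)$, and deleting that step is the bijection with $\paths(A,B)$; no tail-swapping or reflection is needed, and the sign bookkeeping is trivial since $\sigma(P)$ is unchanged. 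So your adopted proof matches one of the paper's two arguments exactly, and you would do well to replace the meandering bijective discussion with either nothing or the clean one-step deletion above.
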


\begin{proof}
	Any path that ends at $(b,c+1)$ that does not pass through $(b,c)$ must end with a diagonal step from $(b+1,c)$. Removing this last diagonal step (which has weight $1$) then gives a weight-preserving bijection from $\paths(A,B')$ to $\paths(A,B)$.
\end{proof}

One can alternatively prove Proposition~\ref{prop:pull} from the determinantal expression $F = \det E$ given in Proposition~\ref{edet}. Note that $E$ contains two columns whose entries in each row $i$ have the form $e^{(c)}_{c+b-a_i}$ and $e^{(c)}_{c+b+1-a_i}$. Adding $x_{c+1}$ times the first column to the second does not change the value of the determinant. The entries in the second column then become
\[x_{c+1}e^{(c)}_{c+b-a_i} + e^{(c)}_{c+b+1-a_i} = e^{(c+1)}_{c+b+1-a_i},\]
so that the resulting matrix corresponds to the new representation $(A,B')$.
%
%
%

Our next operation concerns the action of the divided difference operators $\partial_i$. For a similar result, see \cite[Lemma 4.4]{CLL}.

\begin{Prop} \label{prop:drop}
	Let $(A,B)$ be a lattice path representation of a polynomial $F$, and suppose that $B$ has a unique point $(b,c)$ at height $c$. Then $(A,B')$ is a lattice path representation for $\del_c(F)$, where $B'$ is formed by replacing $(b,c)$ with $(b,c-1)$ in $B$.
	
	If instead $B$ has no point at height $c$, then $\del_c(F)=0$.
\end{Prop}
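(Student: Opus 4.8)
The plan is to work directly with the graph $G$ and analyze how applying $\del_c$ transforms the generating function $F = \sum_{P \in \paths(A,B)} \operatorname{sgn}(\sigma(P)) w(P)$. The key observation is that $\del_c$ acts on each term of this sum by only affecting the steps at heights $c$ and $c+1$. Recall that $e((a,0),(b,c)) = e^{(c)}_{c+b-a}$ and, more generally, the generating function for all paths from a fixed source to a fixed target is a single standard elementary monomial factor times lower data; the variables $x_c$ and $x_{c+1}$ enter a path's weight only through whether it takes an upstep (versus a diagonal step) at heights $c$ and $c+1$ respectively. So I would first set up the bookkeeping: for a collection $P$, let its weight factor out as $w(P) = x_c^{\epsilon_c} x_{c+1}^{\epsilon_{c+1}} \cdot (\text{rest})$, where $\epsilon_c, \epsilon_{c+1} \in \{0,1\}$ record whether \emph{some} path in $P$ takes an upstep at height $c$, resp.\ $c+1$ (no two paths can, since they are vertex-disjoint, and every nonintersecting collection uses exactly one step crossing each horizontal level $y=c-1\to c$ per path, of which at most one is an upstep).

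The heart of the argument is the following. Because $(b,c)$ is the unique point of $B$ at height $c$, exactly one path of $P$ — call it $P^*$ — reaches level $c$ and continues above it; all other paths terminate at height $< c$, hence contribute nothing involving $x_c$ or $x_{c+1}$ (they live entirely at heights $\le c-1$, where the relevant variables are $x_{\le c-1}$), and they are symmetric in $x_c \leftrightarrow x_{c+1}$ trivially. So I would reduce to analyzing the single path $P^*$ from its last point at height $c-1$, say $(b', c-1)$, up to its endpoint $(b,c)$: such a $P^*$ is exactly one step, either an upstep (weight $x_c$, with $b'=b$) or a diagonal step (weight $1$, with $b' = b+1$). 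Applying $\del_c$ to $F$ term-by-term: $\del_c$ kills all terms with $\epsilon_c = \epsilon_{c+1}$ (those are symmetric in $x_c, x_{c+1}$), and on the remaining terms $\del_c$ pairs up the $x_c$-term and the $x_{c+1}$-term and sends their combination to the $x_c^0 x_{c+1}^0$ version. Concretely, I would show $\del_c$ replaces the last step of $P^*$, whichever it is, by a diagonal step at height $c-1$ landing at $(b, c-1)$, and leaves everything else alone — i.e.\ it performs exactly the endpoint move $(b,c) \mapsto (b,c-1)$, establishing the bijection $\paths(A,B) \leftrightarrow \paths(A,B')$ that is weight-preserving after applying $\del_c$ and sign-preserving (the permutation $\sigma(P)$ is unaffected since only one endpoint moves, and it moves to a still-distinct location because $B$ had no other point below $(b,c)$ in the same column that would collide — or if it does, that is handled by the representation being allowed to be redundant). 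For the second statement: if $B$ has no point at height $c$, then every path of $P$ either stays at heights $\le c-1$ or passes through the level $y = c$; but if it passes through level $c$ it must use either an upstep or a diagonal step there, and since nothing in $B$ sits at height $c$ the contributions again organize into $x_c \leftrightarrow x_{c+1}$-symmetric pairs, so $F$ is symmetric in $x_c$ and $x_{c+1}$, whence $\del_c F = 0$.

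The cleaner route, which I would actually present, is to bypass the path combinatorics and argue from the determinant. By Proposition~\ref{edet}, $F = \det(e^{(c_j)}_{c_j + b_j - a_i})$. The hypothesis that $(b,c)$ is the unique point at height $c$ means exactly one column, say column $j_0$, has $c_{j_0} = c$; all other entries $e^{(c_j)}_{*}$ with $j \ne j_0$ are symmetric in $x_c \leftrightarrow x_{c+1}$ since $c_j \ne c$ means $e^{(c_j)}$ involves either both of $x_c, x_{c+1}$ or neither. Expanding $\det$ along column $j_0$ writes $F = \sum_i (-1)^{i+j_0} e^{(c)}_{c+b-a_i} M_i$ with each minor $M_i$ symmetric in $x_c, x_{c+1}$; since $\del_c$ is $\ZZ[\dots,x_{c-1},x_{c+2},\dots]$-linear and annihilates anything symmetric in $x_c, x_{c+1}$, we get $\del_c F = \sum_i (-1)^{i+j_0} \del_c(e^{(c)}_{c+b-a_i}) \, M_i$, and then I need the identity $\del_c\!\left(e^{(c)}_{m}\right) = e^{(c-1)}_{m-1}$, which follows from $e^{(c)}_m = e^{(c-1)}_m + x_c e^{(c-1)}_{m-1}$ (the second summand's $\del_c$ is $\del_c(x_c) e^{(c-1)}_{m-1} = e^{(c-1)}_{m-1}$ since $e^{(c-1)}_{m-1}$ is $x_c$-free, and the first summand is $x_c$-free too, hence killed as it is $s_c$-symmetric). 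Resumming gives $\del_c F = \det(e^{(c_j')}_{c_j' + b_j' - a_i})$ with $(b_{j_0}', c_{j_0}') = (b, c-1)$ and all other columns unchanged, which is precisely the representation $(A, B')$. For the last claim, if no column has height $c$ then $F$ itself is symmetric in $x_c, x_{c+1}$ and $\del_c F = 0$. The only genuine subtlety — and the main thing to verify carefully — is the single-variable identity $\del_c(e^{(c)}_m) = e^{(c-1)}_{m-1}$ and the observation that $e^{(k)}_j$ is symmetric in $x_c \leftrightarrow x_{c+1}$ precisely when $k \ne c$; both are immediate from the splitting $e^{(k)}_j = e^{(k-1)}_j + x_k e^{(k-1)}_{j-1}$, so there is no real obstacle, only care in matching indices.
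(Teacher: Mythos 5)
Your ``cleaner route'' via the determinant is exactly the paper's proof: expand $\det(e^{(c_j)}_{c_j+b_j-a_i})$ along the unique column with $c_{j_0}=c$, note every minor is symmetric in $x_c,x_{c+1}$, and apply $\del_c(e^{(c)}_m)=e^{(c-1)}_{m-1}$ entrywise, with the no-column case giving $\del_c F=0$ by full symmetry. The argument is correct and complete; the preliminary path-by-path sketch is dispensable (and somewhat vaguer), but since you present the determinant version as the actual proof, nothing further is needed.
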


\begin{proof}
	By Proposition~\ref{edet}, $F$ is the determinant of a matrix $(e^{(c_j)}_{c_j+b_j-a_i})_{i,j=1}^k$. Each entry of this matrix is symmetric in $x_c$ and $x_{c+1}$ unless $c = c_j$, which occurs in a unique column (since $B$ has a unique point at height $c$). Then in the Laplace expansion of the determinant along this column, each term has the form $e^{(c)}_{c+b-a_i} \cdot g$ for some minor $g$ that is symmetric in $x_c$ and $x_{c+1}$. Applying $\partial_c$ then gives
	\[\partial_c(e^{(c)}_{c+b-a_i} \cdot g) = \partial_c(e^{(c)}_{c+b-a_i}) \cdot g = e^{(c-1)}_{(c-1)+b-a_i} \cdot g.\]
	Thus $\partial_c$ has the effect of replacing $e^{(c)}_{c+b-a_i}$ with $e^{(c-1)}_{(c-1)+b-a_i}$ in the determinant. By Proposition~\ref{edet}, this new determinant for $\partial_c(F)$ corresponds to the lattice path representation $(A,B')$, as desired.
	
	If instead $B$ has no point at height $c$, then every entry of the determinant for $F$ is symmetric in $x_c$ and $x_{c+1}$, so $\partial_c(F)=0$.
\end{proof}

By combining Propositions~\ref{prop:pull} and \ref{prop:drop}, we arrive at the following operation that preserves heights.

\begin{Prop}\label{prop:slide}
	Let $(A,B)$ be a lattice path representation for $F$, and suppose that $B$ has a unique point $(b,c)$ at height $c$.
	\begin{enumerate}[(a)]
		\item If $(b+1,c-1) \in B$, then $(A,B')$ is a lattice path representation for $-\del_c(F)$, where $B'$ is formed by replacing $(b+1,c-1)$ by $(b,c-1)$ in $B$.
		\item If $(b-1,c-1) \in B$, then $(A,B'')$ is a lattice path representation for $\del_c(F)$, where $B''$ is formed by replacing $(b,c)$ by $(b-1,c)$ in $B$.
	\end{enumerate}
\end{Prop}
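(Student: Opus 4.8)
### Proof Proposal

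The plan is to derive both parts of Proposition~\ref{prop:slide} by composing Proposition~\ref{prop:pull} and Proposition~\ref{prop:drop}, being careful about the order in which the two moves are applied and about the sign bookkeeping induced by relabeling the points of $B$. In both cases the key observation is that after performing one of the two basic moves, the hypotheses of the other become available at the relevant height, so the composition is well-defined.

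For part (a): since $(b,c) \in B$ is the unique point at height $c$ and $(b+1,c-1) \in B$, we first apply Proposition~\ref{prop:drop} at height $c$, which is legal precisely because $(b,c)$ is the unique point at that height. This replaces $(b,c)$ by $(b,c-1)$ and produces a lattice path representation of $\del_c(F)$. Now the set $B$ contains both $(b,c-1)$ and $(b+1,c-1)$ at height $c-1$, so Proposition~\ref{prop:pull} applies (with the roles of the two adjacent points as in its statement): it replaces $(b+1,c-1)$ by $(b,c)$ and does not change the polynomial. The net effect on the set of points is that $(b,c)$ is unchanged while $(b+1,c-1)$ has become $(b,c-1)$, which is exactly the set $B'$ in the statement, and the represented polynomial is still $\del_c(F)$. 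The only subtlety is the sign: Proposition~\ref{prop:pull} is stated for unordered $B$, but reordering the list of endpoints changes the determinant in Proposition~\ref{edet} by the sign of the corresponding permutation, and here swapping which point plays which role contributes a factor of $-1$. Tracking this carefully gives the stated representation of $-\del_c(F)$ rather than $\del_c(F)$.

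For part (b): now $(b,c)$ is again the unique point at height $c$ and $(b-1,c-1) \in B$. Here I would first apply Proposition~\ref{prop:pull} in reverse — more precisely, observe that the point $(b,c) = ((b-1)+1, (c-1)+1)$ can be obtained from the pair of adjacent points at height $c-1$ consisting of $(b-1,c-1)$ and $(b,c-1)$; but $(b,c-1)$ need not be in $B$. The cleaner route is: apply Proposition~\ref{prop:drop} at height $c$ to replace $(b,c)$ by $(b,c-1)$, giving a representation of $\del_c(F)$ in which $B$ now contains $(b-1,c-1)$ and $(b,c-1)$; then apply Proposition~\ref{prop:pull} to these two adjacent points to replace $(b,c-1)$ by $(b-1,c)$, leaving the polynomial $\del_c(F)$ unchanged. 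The resulting set is $B$ with $(b,c)$ replaced by $(b-1,c)$, which is $B''$. In this case the relabeling needed to match up the endpoints is trivial (the moved point keeps its position in the list), so no extra sign appears and we obtain $\del_c(F)$ exactly.

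The main obstacle is purely the sign accounting: Propositions~\ref{prop:pull} and \ref{prop:drop} are phrased for sets $A$, $B$ treated as unordered, but the quantity $F = \sum_{P}\operatorname{sgn}(\sigma(P))\,w(P)$ depends on the ordering through $\operatorname{sgn}(\sigma(P))$, so one must fix an explicit ordering of $B$ throughout the composition and check how the permutation $\sigma(P)$ changes when a point is moved past (or relabeled relative to) its neighbor. The asymmetry between (a) and (b) — one picks up a sign, the other does not — comes exactly from whether the moved point ends up to the left or to the right of the fixed point $(b,c)$ in the final configuration, which flips the relative order of the two columns in the determinant of Proposition~\ref{edot}. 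Once this is set up with a concrete labeling convention, both claims reduce to a one-line composition of the two earlier propositions.
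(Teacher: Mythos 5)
Your argument is correct and coincides with the paper's own proof: both parts are obtained by first applying Proposition~\ref{prop:drop} to the unique point at height $c$ and then Proposition~\ref{prop:pull} to the resulting adjacent pair at height $c-1$. Your sign bookkeeping (the two tracked points swap labels in (a) but not in (b)) is exactly the subtlety the paper's one-line proof leaves implicit, and you resolve it correctly.
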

\begin{proof}
	Apply Proposition~\ref{prop:drop} to the point $(b,c)$, and then apply Proposition~\ref{prop:pull} to the two points at height $c-1$.
\end{proof}

\begin{Ex}
	Let $F=x_1^3x_2^2+x_1^3x_2x_3$, which has lattice path representation
	\[A = \{(0,0), (1,0), (2,0)\}, \qquad B= \{(0,2), (1,1), (1,3)\}\]
	as shown in the middle diagram of Figure~\ref{fig:slide}.
	
	Applying Proposition~\ref{prop:slide}(a) with $c=2$ shows that we can obtain a representation for $\del_2 F = x_1^3x_2+x_1^3x_3$ by moving the endpoint $(1,1)$ to $(0,1)$ (and permuting the set $B$ appropriately to get rid of the sign), as shown on the left of Figure~\ref{fig:slide}.
	
	Alternatively, applying Proposition~\ref{prop:slide}(b) with $c=3$ shows that we can obtain a representation for $\del_3 F = x_1^3x_2$ by moving the endpoint $(1,3)$ to $(0,3)$, as shown on the right of Figure~\ref{fig:slide}.
\end{Ex}

\begin{figure}
		\begin{tikzpicture}
		\foreach \x in {0,...,2}
		\foreach \y in {0,...,2}
		\draw[dashed] (\x,\y)--(\x,\y+1);
		\foreach \x in {1,...,2}
		\foreach \y in {0,...,2}
		\draw[dashed] (\x,\y)--(\x-1,\y+1);
		\node[v] (a1) at (0,0){};
		\node[v] (a2) at (1,0){};
		\node[v] (a3) at (2,0){};
		\node[w] (b1) at (0,2){};
		\node[w] (b2) at (1,1){};
		\node[w] (b3) at (1,3){};
		\draw[very thick] (a1)--(0,1)--(b1);
		\draw[very thick] (a2)--(b2);
		\draw[very thick] (a3)--(2,1)--(1,2)--(b3) (2,1)--(2,2)--(b3);
		\draw[->] (-1,1.5)-- node[above]{$\del_2$} (-2,1.5);
		\draw[->] (3,1.5)--node[above]{$\del_3$} (4,1.5);
		
		\begin{scope}[shift={(-5,0)}]
			\foreach \x in {0,...,2}
			\foreach \y in {0,...,2}
			\draw[dashed] (\x,\y)--(\x,\y+1);
			\foreach \x in {1,...,2}
			\foreach \y in {0,...,2}
			\draw[dashed] (\x,\y)--(\x-1,\y+1);
			\node[v] (a1) at (0,0){};
			\node[v] (a2) at (1,0){};
			\node[v] (a3) at (2,0){};
			\node[w] (b1) at (0,1){};
			\node[w] (b2) at (0,2){};
			\node[w] (b3) at (1,3){};
			\draw[very thick] (a1)--(b1);
			\draw[very thick] (a2)--(1,1)--(b2);
			\draw[very thick] (a3)--(2,1)--(1,2)--(b3) (2,1)--(2,2)--(b3);
		\end{scope}
	
		\begin{scope}[shift={(5,0)}]
			\foreach \x in {0,...,2}
			\foreach \y in {0,...,2}
			\draw[dashed] (\x,\y)--(\x,\y+1);
			\foreach \x in {1,...,2}
			\foreach \y in {0,...,2}
			\draw[dashed] (\x,\y)--(\x-1,\y+1);
			\node[v] (a1) at (0,0){};
			\node[v] (a2) at (1,0){};
			\node[v] (a3) at (2,0){};
			\node[w] (b1) at (0,2){};
			\node[w] (b2) at (1,1){};
			\node[w] (b3) at (0,3){};
			\draw[very thick] (a1)--(0,1)--(b1);
			\draw[very thick] (a2)--(b2);
			\draw[very thick] (a3)--(2,1)--(1,2)--(b3);
		\end{scope}
		\end{tikzpicture}
	\caption{Application of Proposition~\ref{prop:slide}. The center picture gives a lattice path representation for $F=x_1^3x_2^2+x_1^3x_2x_3$. (Both sets of nonintersecting paths are overlaid for conciseness.) The left and right pictures represent $\del_2 F = x_1^3x_2+x_1^3x_3$ and $\del_3F = x_1^3x_2$, respectively.}
	\label{fig:slide}
\end{figure}
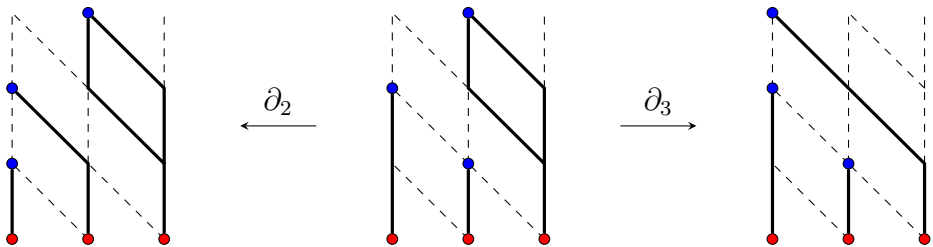

Our last operation concerns products of polynomials. Observe that there exists a directed path from $(a,0)$ to $(b,c)$ if and only if $b \leq a \leq b+c$.

\begin{Prop} \label{prop:product}
	Let $(A,B)$ and $(A',B')$ be lattice path representations for polynomials $F$ and $G$, respectively, such that there do not exist any directed paths from a point in $A$ to a point in $B'$. Then $(A \cup A', B \cup B')$ is a lattice path representation for the product $FG$.
\end{Prop}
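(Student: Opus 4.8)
The plan is to reduce everything to the determinantal description in Proposition~\ref{edet} and exploit a block-triangular structure. Fix orderings $A = \{(a_1,0),\dots,(a_k,0)\}$, $B = \{(b_1,c_1),\dots,(b_k,c_k)\}$ for the first representation and $A' = \{(a'_1,0),\dots,(a'_m,0)\}$, $B' = \{(b'_1,c'_1),\dots,(b'_m,c'_m)\}$ for the second, and order $A \cup A'$ by listing the points of $A$ first and those of $A'$ second (likewise order $B \cup B'$ with the points of $B$ before those of $B'$). By Proposition~\ref{edet}, the signed sum $\sum_{P \in \paths(A \cup A', B \cup B')} \operatorname{sgn}(\sigma(P)) \cdot w(P)$ equals $\det M$, where $M = (e(p,q))$ with $p$ ranging over $A \cup A'$ and $q$ over $B \cup B'$ in the chosen orders.

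Next I would observe that $M$ is block lower triangular with respect to these two partitions. The upper-left block is exactly the matrix $(e(a_i,b_j))_{i,j=1}^k$ appearing in Proposition~\ref{edet} for the pair $(A,B)$, and the lower-right block is the corresponding matrix for $(A',B')$. The upper-right block has entries $e(a,b')$ with $a \in A$ and $b' \in B'$; by hypothesis there are no directed paths from such an $a$ to such a $b'$, so $e(a,b')$ is an empty sum and hence equals $0$. Therefore $\det M$ is the product of the determinants of the two diagonal blocks, which by Proposition~\ref{edet} together with the assumptions that $(A,B)$ and $(A',B')$ represent $F$ and $G$ are equal to $F$ and $G$ respectively. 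Combining, $\sum_{P \in \paths(A \cup A', B \cup B')} \operatorname{sgn}(\sigma(P)) \cdot w(P) = \det M = FG$, so $(A \cup A', B \cup B')$ is a lattice path representation of $FG$, and its multiset of heights is the union of those of $(A,B)$ and $(A',B')$.

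There is no serious obstacle here; the only points requiring care are bookkeeping ones. First, the value of the left-hand signed sum depends on the chosen orderings of $A \cup A'$ and $B \cup B'$ only up to a global sign, so it is legitimate to pick the ``concatenated'' orderings above, and with that choice the determinant literally equals $FG$ rather than $\pm FG$. Second, it is worth recording why a direct bijective argument is less convenient: although every collection of nonintersecting paths from $A \cup A'$ to $B \cup B'$ does split (using the no-path hypothesis together with $\lvert A \rvert = \lvert B \rvert$) into a collection from $A$ to $B$ and one from $A'$ to $B'$, the reverse operation can fail to produce a nonintersecting collection, since a path from $A$ to $B$ may cross a path from $A'$ to $B'$. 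Routing the argument through Lemma~\ref{LGV} and Proposition~\ref{edet} sidesteps this issue entirely, because the relevant cancellation is already built into the determinant.
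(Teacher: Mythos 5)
Your determinantal argument is correct: with the concatenated orderings the matrix $(e(p,q))$ is block lower triangular because $e(a,b')=0$ for every $a\in A$ and $b'\in B'$, so its determinant factors as $F\cdot G$, and Proposition~\ref{edet} identifies that determinant with the signed path sum for $(A\cup A',B\cup B')$; the ordering caveat you raise is handled correctly. The paper takes a different, more direct route: it argues on the path collections themselves, observing that every path ending in $B'$ must start in $A'$, and that a path from $A$ to $B$ can never share a vertex with a path from $A'$ to $B'$ --- if they met at a vertex $v$, concatenating the initial segment of the first path with the final segment of the second would yield a directed path from $A$ to $B'$, contradicting the hypothesis. Hence $\paths(A\cup A',B\cup B')$ is exactly the set of pairings of an element of $\paths(A,B)$ with an element of $\paths(A',B')$, with signs and weights multiplying. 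Your route gets the cancellation for free from the determinant at the cost of routing through Proposition~\ref{edet} twice; the paper's route is a genuine sign- and weight-preserving bijection. One point to correct: your closing claim that the reverse pairing ``can fail to produce a nonintersecting collection, since a path from $A$ to $B$ may cross a path from $A'$ to $B'$'' is false under the stated hypothesis --- the concatenation argument just described shows vertex-disjointness between the two families is automatic, which is precisely why the paper's bijective proof works. This misstatement does not affect the validity of your determinantal argument, but it should be removed.
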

\begin{proof}
	By the given condition, the only points of $A \cup A'$ that points in $B'$ can be connected to are those in $A'$. No paths from $A$ to $B$ intersect any paths from $A'$ to $B'$ (or else there would be a path from $A$ to $B'$), so the elements of $\paths(A \cup A', B \cup B')$ are formed by pairing an element of $\paths(A,B)$ with an element of $\paths(A',B')$.
\end{proof}

Note that one can always translate $(A,B)$ horizontally so that the desired condition holds.

As a special case, we can derive the following result that allows us to delete (or add) certain points from a lattice path representation.

\begin{Prop} \label{prop:delete}
	Let $(A,B)$ be a lattice path representation of a polynomial $F$, and suppose that for some $s \geq 0$,
	\begin{align*}
		A'&=\{(a,0), (a+1,0), \dots, (a+s,0)\} \subseteq A,\\
		B'&=\{(a,0), (a,1), \dots, (a,s)\} \subseteq B.
	\end{align*}
	Then $(A \setminus A', B \setminus B')$  is also a lattice path representation of $F$.
\end{Prop}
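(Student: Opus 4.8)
The plan is to obtain Proposition~\ref{prop:delete} as a special case of the product rule Proposition~\ref{prop:product}, by exhibiting $(A \setminus A', B \setminus B')$ as the ``complementary factor'' of $(A', B')$ inside $(A, B)$. First I would record that $(A', B')$ is itself a lattice path representation of the constant polynomial $1$: listing $A' = \{(a+i, 0)\}_{i=0}^{s}$ and $B' = \{(a, i)\}_{i=0}^{s}$, the $i$th endpoint of $B'$ has (first coordinate) $+$ (height) $= a+i$, which is exactly the first coordinate of the $i$th point of $A'$, so by the remark following the definition of a lattice path representation the corresponding matrix---whose $(i,j)$ entry is $e^{(j)}_{j-i}$---is upper triangular with $1$'s on the diagonal and hence represents $1$. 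Next, writing $\tilde F$ for the polynomial represented by $(A \setminus A', B \setminus B')$, I would verify the hypothesis of Proposition~\ref{prop:product}: that no directed path of $G$ runs from a point of $A \setminus A'$ to a point of $B'$. Every point of $A$ has the form $(x,0)$ with $x \in \ZZ$, and any such point with $a \le x \le a+s$ already lies in $A'$, so every point of $A \setminus A'$ has first coordinate $x < a$ or $x > a+s$; since a directed path from $(x,0)$ to $(a,j)$ exists only when $a \le x \le a+j \le a+s$, no such path can exist.

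With this in hand, Proposition~\ref{prop:product} applies to the representations $(A \setminus A', B \setminus B')$ of $\tilde F$ and $(A', B')$ of $1$ (the two index sets being disjoint because $A' \subseteq A$ and $B' \subseteq B$), and yields that $(A, B) = \bigl((A \setminus A') \cup A',\ (B \setminus B') \cup B'\bigr)$ is a lattice path representation of $\tilde F \cdot 1 = \tilde F$. Since the polynomial attached to a lattice path representation is determined by the representation and $(A,B)$ represents $F$ by hypothesis, we conclude $\tilde F = F$, which is the claim.

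I do not expect a real obstacle here; the only content is the elementary path-existence check above, and the rest is bookkeeping. If one wanted a proof independent of Proposition~\ref{prop:product}, one could argue directly that in any nonintersecting family $P \in \paths(A, B)$ the endpoints in $B'$ are forced: $(a,0)$ must be joined to $(a,0) \in A'$ by the empty path, and then inductively $(a,i)$ must be joined to $(a+i,0)$ by the unique all-diagonal path, since at each stage the points $(a,0), \dots, (a+i-1,0)$ of $A$ are already used and any path reaching $(a,i)$ must start at some $(x,0)$ with $a \le x \le a+i$. Deleting these $s+1$ weight-$1$ diagonal paths, which contribute trivially to $\sigma(P)$, then gives a weight- and sign-preserving bijection onto $\paths(A \setminus A', B \setminus B')$; the one point to check in this version is that a path beginning at a point of $A \setminus A'$ (first coordinate $< a$ or $> a+s$) never enters the triangular region $\{(x,i) : x \ge a,\ i \ge 0,\ x+i \le a+s\}$ swept out by those diagonals, which follows since the first coordinate along any directed path is nonincreasing and drops by at most one per step.
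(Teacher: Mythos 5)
Your proposal is correct and follows essentially the same route as the paper: both reduce to Proposition~\ref{prop:product} by observing that $(A',B')$ represents the constant polynomial $1$ and that no directed path runs from $A \setminus A'$ to $B'$. The only cosmetic difference is that you justify the first observation via the upper-triangular determinant while the paper notes the unique all-diagonal path collection; your optional direct bijection at the end is a fine (and slightly more careful about signs) alternative, but not needed.
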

\begin{proof}
	There are no directed paths from any point in $A \setminus A'$ to any point in $B'$. Since there is a unique collection of nonintersecting paths from $A'$ to $B'$, and these paths use only diagonal steps, $(A',B')$ is a lattice path representation of $1$. The result then follows from Proposition~\ref{prop:product}.
\end{proof}


\section{Representing Schubert polynomials}

In this section, we will use the operations described in \S3 to construct lattice path representations for a wide range of Schubert polynomials.

\subsection{Compact representations}

We will first investigate a special type of lattice path representation.

\begin{Def}
	A lattice path representation $(A,B)$ is \emph{compact} if $\{a_1, \dots, a_k\} = \{c_1, \dots, c_k\} = \{0, \dots, k-1\}$, and $0 \leq b_i \leq k-1$ for all $i$, where $k = |A|=|B|$.
\end{Def}

In other words, a compact lattice path representation is proper, and all endpoints fit within a square of side length $k-1$, where $k = |A| = |B|$. Note that in order for there to exist at least one set of nonintersecting lattice paths, we must have that at least $s$ of the $b_i$ are less than $s$ (so that the paths starting at the first $s$ points of $A$ have endpoints), that is, the sequence of $b_i$ must be a \emph{parking function}.

Our main result of this section will be the following theorem.

\begin{Th} \label{thm:compact}
	Let $w \in S_n$ be a permutation that avoids $1324$, $2413$, and $3142$. Then $\mathfrak S_w$ has a compact lattice path representation. 
\end{Th}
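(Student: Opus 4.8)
The permutations avoiding $1324$, $2413$, and $3142$ are exactly the \emph{separable} permutations that additionally avoid $1324$. Since separable permutations are built from the single point using $\oplus$ and $\ominus$, the natural plan is to induct on the structure of $w$ as a separable permutation, using Proposition~\ref{prop:schubertsum} to reduce the product formulas for $\mathfrak S_{u \oplus v}$ and $\mathfrak S_{u \ominus v}$ to the operations on lattice path representations developed in \S3 (most importantly Proposition~\ref{prop:product} and Proposition~\ref{prop:delete}). I would first verify the base case: the identity and single-point permutations have trivial compact representations, and more generally I would want a clean description of the compact representation attached to a dominant permutation, since dominant permutations are the ``skew-sum-free'' atoms in a sense (a $132$-avoiding permutation has $\mathfrak S_w$ equal to a single monomial $x_1^{c_1} x_2^{c_2}\cdots$, which has an obvious lattice path representation; one must arrange it to be compact).

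The inductive step splits into the direct-sum case and the skew-sum case. For $w = u \oplus v$ with $u \in S_m$: by Proposition~\ref{prop:schubertsum}(a), $\mathfrak S_w = \mathfrak S_u \cdot \mathfrak S_{1_m \oplus v}$. Here $\mathfrak S_{1_m\oplus v}$ is $\mathfrak S_v$ with variables shifted up by $m$, i.e.\ $\mathfrak S_v(x_{m+1}, x_{m+2}, \dots)$. Assuming by induction that $\mathfrak S_u$ and $\mathfrak S_v$ have compact representations, I would shift the representation of $\mathfrak S_v$ vertically by $m$ (so all its points at height $\geq$ something get raised) and horizontally so that the hypothesis of Proposition~\ref{prop:product} holds, then glue; finally I need to massage the resulting (proper) representation back into compact form, possibly padding with ``identity'' points of the type in Proposition~\ref{prop:delete} to fill out the square and make the $a$-coordinates and heights be exactly $\{0,\dots,k-1\}$. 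For $w = u \ominus v$: by Proposition~\ref{prop:schubertsum}(b), $\mathfrak S_w = \mathfrak S_u \cdot (x_1\cdots x_m)^n \cdot \mathfrak S_v(x_{m+1},\dots)$, so I would build a representation of the monomial factor $(x_1\cdots x_m)^n$ (easy, since it is a dominant Schubert polynomial up to relabeling) and multiply in via Proposition~\ref{prop:product}, again followed by a compactification step. The key point making this all work should be that the $1324$-avoidance is exactly the constraint needed to keep the heights distinct throughout — when we take a direct sum of two nontrivial blocks, the heights of the two pieces must not collide, and I expect that a height collision corresponds precisely to a $1324$ pattern spanning the two blocks.

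The main obstacle I anticipate is the bookkeeping of \emph{compactness} rather than mere properness. Proposition~\ref{prop:product} glues representations but says nothing about the resulting point set filling a $k\times k$ square with $a$-coordinates and heights equal to $\{0,\dots,k-1\}$; one has to choose the horizontal shifts and the padding (via Proposition~\ref{prop:delete}, used in reverse to \emph{add} a staircase of trivial points) carefully, and one must check the heights of the combined representation really are $\{0,1,\dots,k-1\}$ with no repeats — this is where $1324$-avoidance must be invoked, and pinning down that implication precisely (probably by an induction hypothesis that records not just ``compact'' but the exact multiset of heights, or a normal form for the representation in terms of the code of $w$) is the technical heart. I would therefore strengthen the inductive statement: rather than merely ``$\mathfrak S_w$ has a compact representation,'' I would carry along an explicit description of a canonical compact representation of $\mathfrak S_w$ (for instance, with $A$ determined by the code of $w$ and the heights being $\{0,1,\dots,n-1\}$ in a prescribed way), prove the $\oplus$ and $\ominus$ steps preserve this canonical form, and only at the end observe that it is compact. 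A secondary, more routine obstacle is handling the degenerate cases where $u$ or $v$ is trivial (so the sum is not a ``genuine'' decomposition), which should be absorbed by allowing the staircase padding to have size zero.
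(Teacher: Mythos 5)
Your overall architecture (induction on the separable decomposition, gluing via Proposition~\ref{prop:product}, padding or deleting staircases via Proposition~\ref{prop:delete}) matches the paper's, and your treatment of the skew-sum case is essentially right in spirit: the paper translates $B_v$ upward by $m$, which by compactness forces every path to begin with $m$ upsteps and hence produces the factor $(x_1\cdots x_m)^{n-m}\,\mathfrak S_v(x_{m+1},\dots,x_{m+n})=\mathfrak S_{1_m\ominus v}$ in one stroke. The genuine gap is in the direct-sum case, and it is not mere bookkeeping. The identity you rely on there is false: $\mathfrak S_{1_m\oplus v}$ is \emph{not} $\mathfrak S_v(x_{m+1},x_{m+2},\dots)$ (for example $\mathfrak S_{132}=x_1+x_2$ while $\mathfrak S_{21}(x_2)=x_2$); only the skew sum $1_m\ominus v$ admits such a variable-shift description, as in Proposition~\ref{prop:schubertsum}(b). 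Consequently there is no operation in \S3 that converts a compact representation of $\mathfrak S_v$ into one of $\mathfrak S_{1_m\oplus v}$, and the naive inductive step ``both summands have compact representations, so glue them'' cannot be carried out for $w=u\oplus v$.

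Relatedly, you guessed that $1324$-avoidance enters as a condition preventing height collisions when gluing; its actual role is structural and stronger. If $w=u\oplus v$ with both blocks nontrivial, then $1324$-avoidance forces $u$ to avoid $132$ (a $132$ pattern in $u$ followed by any letter of $v$ gives a $1324$) and $v$ to avoid $213$ (any letter of $u$ followed by a $213$ pattern in $v$ gives a $1324$). The paper then invokes two separate lemmas, proved not by the separable-decomposition induction but by downward induction on length using the slide operation (Proposition~\ref{prop:slide}): Lemma~\ref{lemma:132} gives an explicit compact representation for the dominant block $u$, with $B_u$ read off from the code of $u$ at heights $0,\dots,m-1$, and Lemma~\ref{lemma:213} gives one for the $213$-avoiding permutation $1_m\oplus v$, whose trailing staircase is deleted to leave points exactly at heights $m,\dots,n-1$. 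The explicit coordinates (in particular $b_i\le m-i$ for the dominant block) are what guarantee both the disjointness of the heights and the no-directed-path hypothesis of Proposition~\ref{prop:product}. Without identifying this consequence of $1324$-avoidance and supplying these two explicit constructions, the inductive step for $u\oplus v$ does not go through.
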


Recall that a permutation is called separable if it avoids $2413$ and $3142$. Hence the permutations in the theorem above are the $1324$-avoiding separable permutations.

To prove this theorem, we first consider the special cases of dominant ($132$-avoiding) permutations and $213$-avoiding permutations.

\begin{Lemma} \label{lemma:132}
	Let $w \in S_n$ be a $132$-avoiding permutation. Then $(-1)^{\binom{n}{2} - \ell(w)} \mathfrak S_w$ has a compact lattice path representation $(A,B)$, where
	\begin{align*}
		A &= \{(n-1,0), (n-2,0), \dots, (0,0)\},\\
		B &= \{(b_1, 0), (b_2, 1), \dots (b_n, n-1)\},
	\end{align*}
	where $(b_1, b_2, \dots, b_n)$ is the code of $w$.
\end{Lemma}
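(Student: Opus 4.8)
The plan is to prove Lemma~\ref{lemma:132} by induction on $\ell(w)$, using the recursive definition of Schubert polynomials together with the slide operation of Proposition~\ref{prop:slide}. The base case is the longest $132$-avoiding permutation in $S_n$, namely $w_0 = w_0^{(n)} = n(n-1)\cdots 1$, which has code $(n-1, n-2, \dots, 1, 0)$ and $\mathfrak S_{w_0} = x_1^{n-1}x_2^{n-2}\cdots x_{n-1}$. For this case I would exhibit directly the lattice path representation with $A = \{(n-1,0), \dots, (0,0)\}$ and $B = \{(n-1,0), (n-2,1), \dots, (0,n-1)\}$: the matrix $(e^{(c_j)}_{c_j + b_j - a_i})_{i,j}$ is lower triangular (its $(i,j)$ entry vanishes when $i > j$ because the subscript becomes negative), with diagonal entries $e^{(j-1)}_{j-1} = x_1 x_2 \cdots x_{j-1}$, so the determinant is exactly $\mathfrak S_{w_0}$; since $\binom n2 - \ell(w_0) = 0$ the sign is correct, and the representation is manifestly compact.

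For the inductive step, suppose $w \neq w_0$ is $132$-avoiding with code $c = (c_1, \dots, c_n)$; since $w$ is dominant, $c$ is a nonincreasing sequence, and because $w \neq w_0$ the sequence $c$ is not the staircase $(n-1, \dots, 1, 0)$. The idea is to find an $i$ with $\ell(ws_i) < \ell(w)$ such that $ws_i$ is still $132$-avoiding and has code obtained from $c$ by a controlled local change, and then relate the representations via $\partial$. Concretely, on codes the operation $w \mapsto w s_i$ with $c_i > c_{i+1}$ corresponds to decrementing $c_i$ by $1$ (and $\ell$ drops by $1$); dominance of $w s_i$ is automatic as long as the resulting code stays nonincreasing. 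Running the induction backwards, I would instead start from $w$, pick the appropriate $i$, and note $\mathfrak S_w = \partial_i^{-1}$ is not meaningful — so the cleaner formulation is: assume the lemma for all dominant permutations of length $> \ell(w)$ and find $v$ dominant with $v s_i = w$, $\ell(v) = \ell(w)+1$, so that $\mathfrak S_w = \partial_i \mathfrak S_v$. Such a $v$ exists precisely when we can increment some $c_j$ by $1$ keeping the code nonincreasing and in range, which is possible for any non-staircase nonincreasing code; the index $i$ at which we apply $\partial$ is the height $c = c_j$ (the new, smaller value). Apply the inductive hypothesis to $v$ to get its compact representation $(A, \tilde B)$ with endpoints indexed by the code of $v$, which differs from the code of $w$ only in the $j$th entry (larger by $1$).

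The key point is then that Proposition~\ref{prop:slide} translates $\partial_c$ exactly into moving that single endpoint down by one in height or one to the left, which is precisely the difference between $\tilde B$ and $B$. To invoke Proposition~\ref{prop:slide} I must check (i) that $\tilde B$ has a \emph{unique} point at height $c$ — this holds because the representation is proper — and (ii) that the neighboring point at height $c-1$ is in the correct horizontal position so that one of the two cases (a) or (b) applies, moving the height-$c$ point to a point whose $x$-coordinate is $b_j$ as dictated by the code of $w$. Working out which case applies and that the resulting point is still within the $k \times k$ box (so the representation remains compact) is where the bookkeeping lives: one must track, using the explicit formula $b_j = $ (code entry) against $a_i = n - i$, that the relative positions of consecutive endpoints are governed by whether the code strictly decreases or stays constant at that spot, and that a strict decrease is exactly what guarantees the required neighbor exists. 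Matching the signs is routine: each application of Proposition~\ref{prop:slide}(a) contributes a sign $-1$ and each application of (b) contributes $+1$, and since $\binom n2 - \ell(w)$ increases by exactly $1$ as we pass from $v$ to $w$, the parity works out; I would verify the sign in the base case and then observe it is preserved. The main obstacle I anticipate is the careful case analysis in the inductive step — choosing the right $i$ and confirming that exactly one of Proposition~\ref{prop:slide}(a)/(b) is applicable with the neighbor in the right place — rather than any conceptual difficulty, since the operations of \S3 are tailored precisely for this argument.
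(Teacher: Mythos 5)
Your overall strategy is exactly the paper's: induct downward from $w_0$, write $\mathfrak S_w = \partial_i \mathfrak S_v$ for a dominant $v$ with $\ell(v)=\ell(w)+1$, and track the endpoint set via Proposition~\ref{prop:slide}. The base case is fine, modulo two slips: the matrix $(e^{(j-1)}_{i-1})$ is \emph{upper} triangular (the entries with $i>j$ vanish because the subscript exceeds the number of variables, not because it is negative). The problem is that the ``bookkeeping'' you defer is where the content lives, and your sketch of it contains concrete errors. First, the divided difference index is not the code value $c_j$. If $c(v)$ is obtained from $c(w)$ by incrementing position $j$, then $w=vs_j$ and $\mathfrak S_w=\partial_j\mathfrak S_v$; the height at which Proposition~\ref{prop:slide} is applied is the position $j$, not the value $c_j$ (e.g.\ for $w=3241$ with code $(2,1,1,0)$ one takes $v=3421$ and applies $\partial_2$, while $c_2=1$). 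Second, it is not true that any increment keeping the code nonincreasing and in range produces such a $v$: right multiplication by $s_j$ replaces $(c_j,c_{j+1})$ by $(c_{j+1}+1,c_j)$, so this is an increment of position $j$ only when $c_j=c_{j+1}$. The paper therefore takes $j$ minimal with $c_j=c_{j+1}$; this equality is also exactly what places the point $(c_{j+1},j)=(c_j,j)$ in the position required for case (a) of Proposition~\ref{prop:slide} relative to $(c_j+1,j-1)$. For instance, with $c(w)=(1,0,0)$ your criterion allows incrementing position $1$ to $(2,0,0)$, but $312\,s_i\neq 213$ for any $i$, and neither case of the slide applies to move $(2,0)$ to $(1,0)$; one must increment position $2$.

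Third, your sign argument is not coherent as written: cases (a) and (b) of Proposition~\ref{prop:slide} contribute opposite signs, so ``the parity works out'' cannot be concluded without knowing which case occurs. In fact only case (a) ever occurs here (a consequence of the correct choice of $j$ above), so every inductive step contributes exactly one factor of $-1$, matching $(-1)^{\binom{n}{2}-\ell(w)}$; this needs to be said, not left as a case analysis whose outcome is assumed not to matter. With these three points repaired your argument coincides with the paper's proof.
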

\begin{proof}
	We will induct on $\binom{n}{2} - \ell(w)$. When $w = w_0$, $b_i = n-i$, and the only way to connect $A$ and $B$ with nonintersecting lattice paths is via vertical paths, which have combined weight $\mathfrak S_{w_0} = x_1^{n-1}x_2^{n-2} \cdots x_{n-1}$.
	
	Suppose $w \neq w_0$ and let $(b_1, \dots, b_n)$ be the code of $w$. Since $w$ is dominant, we must have $n-1 \geq b_1 \geq b_2 \geq \dots \geq b_n \geq 0$. Since $w \neq w_0$, there exists a minimum index $i$ such that $b_{i} = b_{i+1}$, so that $w_i < w_{i+1}$. Then $w'=ws_i$ has length $\ell(w')=\ell(w)+1$ and has code $(b_1, \dots, b_{i-1}, b_i+1, b_{i+1}, \dots b_n)$. Since this code is still weakly decreasing, $w'$ is also dominant and therefore by induction $(-1)^{\binom{n}{2} - \ell(w')}\mathfrak S_{w'}$ has a corresponding lattice path representation $(A, B')$. 
	
	Note that $B'$ contains the two points $(b_i+1, i-1)$ and $(b_{i+1},i)=(b_i,i)$. Since $B$ is obtained from $B'$ by replacing $(b_i+1,i-1)$ with $(b_i,i-1)$, it follows by Proposition~\ref{prop:slide}(a) that $(-1)^{\binom{n}{2}-\ell(w)}\del_i(\mathfrak S_{w'}) = (-1)^{\binom{n}{2}-\ell(w)}\mathfrak S_w$ has lattice path representation $(A,B)$.
\end{proof}

Alternatively, since $w$ is dominant, $\mathfrak S_w$ is a monomial. Hence one can also prove Lemma~\ref{lemma:132} by verifying that there exists a unique collection of nonintersecting lattice paths from $A$ to $B$ of the appropriate weight.

One can similarly prove the following result for $213$-avoiding permutations. (Note that $w$ is $213$-avoiding if and only if $w_0ww_0$ is dominant.)

\begin{Lemma} \label{lemma:213}
	Let $w \in S_n$ be a $213$-avoiding permutation. Then $\mathfrak S_w$ has compact lattice path representation $(A,B)$, where
	\begin{align*}
		A &= \{(n-1,0), (n-2,0), \dots, (0,0)\},\\
		B &= \{(b_1, n-1), (b_2,n-2), \dots, (b_n,0)\},
	\end{align*}
	where $(b_1, b_2, \dots, b_n)$ is the code of $w_0ww_0$.
\end{Lemma}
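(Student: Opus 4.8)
The plan is to mimic the proof of Lemma~\ref{lemma:132}, transporting everything through the conjugation $w \mapsto w_0 w w_0$. Recall that $w$ is $213$-avoiding if and only if $\tilde w := w_0 w w_0$ is $132$-avoiding (i.e.\ dominant), so $\mathfrak S_{\tilde w}$ is the monomial $x_1^{b_1} x_2^{b_2}\cdots$, where $(b_1,\dots,b_n)$ is the code of $\tilde w$ and $b_1 \geq b_2 \geq \cdots \geq b_n \geq 0$. The key combinatorial input is that the heights in the claimed representation run $n-1, n-2, \dots, 0$ from top to bottom of the list $B$, i.e.\ they are reversed relative to Lemma~\ref{lemma:132}, and the weakly decreasing code is paired with these \emph{decreasing} heights rather than \emph{increasing} ones. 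I would first check directly that $(A,B)$ as stated is compact: the first coordinates of $A$ are $\{0,\dots,n-1\}$, the heights $\{c_i\}$ are $\{0,\dots,n-1\}$, and $0 \le b_i \le n-i \le n-1$ since $(b_1,\dots,b_n)$ is a Lehmer code.

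Next I would run the induction on $\binom n2 - \ell(w)$ exactly as before. The base case is $w = w_0$ (equivalently $\tilde w = 1_n$, $b_i = 0$ for all $i$): then $A = \{(n-1,0),\dots,(0,0)\}$ and $B = \{(0,n-1),(0,n-2),\dots,(0,0)\}$, and the unique family of nonintersecting paths from $A$ to $B$ uses only diagonal steps (the path from $(a,0)$ must reach $(0,a)$ using $a$ diagonal steps and no upsteps), giving total weight $1 = \mathfrak S_{1_n} = \mathfrak S_{w_0}$ — wait, here one must be slightly careful: the base case should be chosen so that $\mathfrak S_w$ is the \emph{monomial} $x_1^{n-1}\cdots x_{n-1}$, which corresponds to $\tilde w = 1_n$ being replaced by... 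Actually the cleanest base case is $w$ with $\tilde w = w_0$, i.e.\ $w = 1_n$: then $b_i = n-i$, $B = \{(n-1,n-1),(n-2,n-2),\dots,(0,0)\}$, the only nonintersecting family consists of vertical paths from $(a,0)$ to $(a,a)$, and the combined weight is $\prod_a x_1 x_2\cdots x_a = x_1^{n-1}x_2^{n-2}\cdots x_{n-1}$... no, that is not $\mathfrak S_{1_n}=1$ either. The correct statement is that in the $213$-avoiding family the induction \emph{decreases} $\ell(w)$ via $\partial$ operators just as the $132$ case, with base case $w = w_0$; I will take the base case $w=w_0$, where $\tilde w = 1_n$, $b_i = 0$, $B=\{(0,n-1),\dots,(0,0)\}$, unique path family all diagonal, weight $1$, matching $\mathfrak S_{w_0}$? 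That is false since $\mathfrak S_{w_0}\ne 1$. The resolution is that for $213$-avoiding $w$ one does \emph{not} start from $w_0$; instead one should start the induction from the other end. I would therefore induct on $\ell(w)$ itself, base case $w = 1_n$ with $\mathfrak S_{1_n}=1$: then $\tilde w = w_0$, $b_i = n-i$, $B=\{(n-1,n-1),(n-2,n-2),\dots,(0,0)\}$, and the unique nonintersecting family from $A=\{(n-1,0),\dots,(0,0)\}$ to $B$ uses only vertical steps; its weight is $\prod_{a=0}^{n-1}(x_1\cdots x_a)$, which is \emph{not} $1$. Hmm — so the lemma as literally stated would need $\mathfrak S_{1_n}$ to equal that monomial, which it does not, meaning the representation of $1$ here is actually the $(A',B')$ of Proposition~\ref{prop:delete} and so equals $1$ after all (a vertical path from $(a,0)$ to $(a,a)$ has weight $x_1\cdots x_a$, not $1$ — that is the issue). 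Given this subtlety, the honest plan is: show $(A,B)$ here equals, after applying Proposition~\ref{prop:delete} to strip the points $(0,0),(0,1),\dots$ that sit on the "staircase", a representation whose nontrivial part matches Lemma~\ref{lemma:132} applied to $\tilde w$ read in a rotated frame; concretely, reflecting the picture through the line $x = y$ (swapping upsteps and diagonal steps, which swaps $e_j^{(k)}$ with itself appropriately) converts the $132$ representation of $\tilde w$ into the claimed representation of $w$.

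Cleanest is the inductive route parallel to Lemma~\ref{lemma:132}: induct on $\binom n2 - \ell(w)$, base case $w = w_0$. For $w \ne w_0$ with $\tilde w$ dominant and code $(b_1,\dots,b_n)$, since $\tilde w \ne 1_n$ pick the \emph{maximum} index $i$ with $b_i = b_{i+1}$ (the mirror of the "minimum index" choice, because heights are reversed); then $w' = w s_{?}$ appropriately has $\ell(w') = \ell(w)+1$, is still $213$-avoiding, and $B'$ differs from $B$ by replacing one endpoint at height $i$ (or $n-i$) by an adjacent one at the same or neighboring height, so Proposition~\ref{prop:slide}(b) — rather than (a), since now the lower-height partner sits to the \emph{left} — yields that $\partial(\mathfrak S_{w'}) = \mathfrak S_w$ has representation $(A,B)$. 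The main obstacle is exactly this bookkeeping: tracking how the Lehmer code of $\tilde w = w_0 w w_0$ changes when one multiplies $w$ by a simple transposition, verifying that the relevant pair of points in $B$ is of the form required by Proposition~\ref{prop:slide}(b) (unique point at its height, with the left neighbor present one level down), and pinning down whether part (a) or part (b) of Proposition~\ref{prop:slide} applies and with which sign — since, unlike the dominant case, there is no global sign $(-1)^{\binom n2 - \ell(w)}$ in the statement, so the slides here must be sign-free, which forces the use of Proposition~\ref{prop:slide}(b) throughout. Verifying that (b) always applies (never (a)) for $213$-avoiding $w$, and that the resulting $B$ stays within the $k\times k$ box, is where the real work lies; everything else is a transcription of the proof of Lemma~\ref{lemma:132}.
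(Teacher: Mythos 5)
Your overall strategy---inducting on $\binom n2 - \ell(w)$ in parallel with Lemma~\ref{lemma:132} and observing that the reversed heights turn the signed slide of Proposition~\ref{prop:slide}(a) into the sign-free slide of Proposition~\ref{prop:slide}(b)---is exactly the paper's, and your inference that part (b) must be the one used because the statement carries no sign is correct. However, the proposal contains a computational error that derails your base case and a modification that would break the induction. The conjugation is backwards: $w_0\,w_0\,w_0 = w_0$ and $w_0\,1_n\,w_0 = 1_n$, so the base case $w=w_0$ has $w_0ww_0=w_0$ with code $b_i=n-i$, hence $B=\{(n-1,n-1),(n-2,n-2),\dots,(0,0)\}$; the unique nonintersecting family consists of vertical paths from $(n-i,0)$ to $(n-i,n-i)$, with total weight $x_1^{n-1}x_2^{n-2}\cdots x_{n-1}=\mathfrak S_{w_0}$. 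There is no problem to resolve, and the long detour through Proposition~\ref{prop:delete} and a reflection argument is unnecessary.

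Second, your proposed choice of the \emph{maximum} index $i$ with $b_i=b_{i+1}$ is wrong. For the dominant code $(2,1,1,1,0)$ in $S_5$ the maximal such index is $i=3$, and incrementing $b_3$ gives $(2,1,2,1,0)$, which is not weakly decreasing; then $u'=us_i$ is not dominant, $w'=w_0u'w_0$ is not $213$-avoiding, and the inductive hypothesis no longer applies. The paper keeps the \emph{minimum} index exactly as in Lemma~\ref{lemma:132} (reversing the heights does not reverse this choice). With that choice, $B'$ contains the two points $(b_i+1,\,n-i)$ and $(b_{i+1},\,n-i-1)=(b_i,\,n-i-1)$, and Proposition~\ref{prop:slide}(b) replaces $(b_i+1,\,n-i)$ by $(b_i,\,n-i)$, giving $(A,B)$ as a representation of $\del_{n-i}\mathfrak S_{w'}=\mathfrak S_{ws_{n-i}}=\mathfrak S_w$, where $w'=w_0us_iw_0=ws_{n-i}$. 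This bookkeeping is precisely the step you defer as ``where the real work lies,'' so the decisive verification is absent from the proposal rather than carried out.
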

\begin{proof}
	We induct on $\binom{n}{2}-\ell(w)$. When $w=w_0$, $b_i = n-i$, and there is a unique set of nonintersecting paths from $A$ to $B$ with weight $\mathfrak S_{w_0}$.
	
	Suppose $w \neq w_0$, and let $u = w_0ww_0$. Since $u$ is dominant, we can define $u'=us_i$ such that $u'$ is dominant as in Lemma~\ref{lemma:132}. Then $w' = w_0u'w_0 = w_0uw_0s_{n-i} = ws_{n-i}$ is also $213$-avoiding with $\ell(w') = \ell(w)+1$. Hence by induction $\mathfrak S_{w'}$ has a corresponding lattice path representation $(A,B')$.
	
	Since (as in Lemma~\ref{lemma:132}) $u'$ has code $(b_1, \dots, b_{i-1},b_{i}+1, b_{i+1}, \dots, b_n)$, $B'$ contains the two points $(b_i+1, n-i)$ and $(b_{i+1},n-i-1) = (b_i,n-i-1)$. But $B$ is obtained from $B'$ by replacing $(b_i+1,n-i)$ with $(b_i, n-i)$, so by Proposition~\ref{prop:slide}(b), $(A,B)$ is a lattice path representation for $\del_{n-i} \mathfrak S_{w'} = \mathfrak S_w$.
\end{proof}

Applying Proposition~\ref{edet} to the lattice path representation in Lemma~\ref{lemma:213} yields a determinantal formula that gives the SEM expansion for $\mathfrak S_w$ when $w$ is $213$-avoiding as in Corollary 17.12 of \cite{PS}.

We are now ready to prove that any $1324$-avoiding separable permutation has a compact lattice path representation.

\begin{proof}[Proof of Theorem~\ref{thm:compact}]
	We proceed by induction on $n$. The case $n=1$ is trivial. For $n > 1$, since $w$ avoids $2413$ and $3142$, it is separable. Hence we can either write $w=u \ominus v$ or $w = u \oplus v$ for separable permutations $u \in S_m$ and $v \in S_{n-m}$ that avoid $1324$.
	
	Suppose first that $w = u \ominus v$. By induction, $u$ and $v$ have compact lattice point representations $(A_u, B_u)$ and $(A_v, B_v)$, respectively. Using addition to indicate translation, we claim that if
	\begin{align*}
	A_w &= (A_u + (n-m,0)) \cup A_v = \{(n-1,0), (n-2,0), \dots, (0,0)\},\\
	B_w &= (B_u+(n-m,0)) \cup (B_v+(0,m)),
	\end{align*}
	then $(A_w, B_w)$ is a lattice point representation for $\mathfrak S_w$. Note that if $A_v$ and $B_v+(0,m)$ are connected by nonintersecting lattice paths, then all paths must start with $m$ upsteps by compactness. It follows that $(A_v, B_v+(0,m))$ represents the polynomial $(x_1\cdots x_m)^{n-m} \cdot \mathfrak S_v(x_{m+1}, \dots, x_{m+n}) = \mathfrak S_{1_m \ominus v}$ as in Proposition~\ref{prop:schubertsum}. Also $(A_u+(n-m,0), B_u+(n-m,0))$ represents $\mathfrak S_u$ as before. Since there are no directed paths from $A_v$ to $B_u+(n-m,0)$, Proposition~\ref{prop:product} implies that $(A_w, B_w)$ represents the product $\mathfrak S_u \cdot \mathfrak S_{1_m \ominus v}$, which equals $\mathfrak S_w$ by Proposition~\ref{prop:schubertsum}.
	
	Suppose instead that $w = u \oplus v$. Since $w$ avoids $1324$, $u$ must avoid $132$ and $v$ must avoid $213$. Hence $v' = 1_m \oplus v$ must also avoid $213$. We can then construct a lattice path representation $(A_{v'}, B_{v'})$ of $v'$ using Lemma~\ref{lemma:213}. Note that the code of $w_0^{(n)}v'w_0^{(n)} = w_0^{(n-m)}vw_0^{(n-m)} \oplus 1_m$ ends with $m$ zeroes. Thus $(0,0), (0,1),\dots, (0,m-1) \in B_{v'}$. By Proposition~\ref{prop:delete}, it follows that $(A'_{v'}, B'_{v'})$ is also a lattice path representation of $v'$, where
	\begin{align*}
		A'_{v'} = A_{v'} \setminus \{(0,0), (1,0), \dots, (m-1,0)\},\\
		B'_{v'} = B_{v'} \setminus \{(0,0), (0,1), \dots, (0,m-1)\}.
	\end{align*}
	(In fact, if $B_v$ is constructed for $v$ using Lemma~\ref{lemma:213}, then $B'_{v'}$ is the translation $B_v + (0,m)$.)
	
	Now consider the lattice path representation $(A_u, B_u)$ as constructed by Lemma~\ref{lemma:132}. If $(b_i,i-1) \in B_u$, then $b_i \leq m-i$ by the definition of the code of $u$. Hence there does not exist a directed path from any point of $A'_{v'}$ to any point in $B_u$. By Proposition~\ref{prop:product}, it follows that $(A_w, B_w) = (A_u \cup A'_{v'}, B_u \cup B'_{v'})$ is a lattice path representation of $\mathfrak S_u \cdot \mathfrak S_{1_m \oplus v}$, which equals $\mathfrak S_w$ by Proposition~\ref{prop:schubertsum}.
\end{proof}

\begin{Ex} \label{ex:87321564}
	Let $w = 87321564 = 21 \ominus v$, where $v = 321564$.
	
	Since $v = 321 \oplus 231$, $\mathfrak S_{v} = \mathfrak S_{321} \cdot \mathfrak S_{123564}$. Now $321$ is $132$-avoiding with code $(2,1,0)$, while $123564$ is $213$-avoiding with
	\[c(w_0 \cdot 123564 \cdot w_0) = c(312456) = (2,0,0,0,0,0).\] Reversing this second code and combining with the first gives $(2,1,0,0,0,2)$, so $321564$ has compact lattice path representation $(A_v,B_v)$ (up to sign) with 
	\begin{align*}
		A_v&=\{(0,0),(1,0), (2,0), (3,0), (4,0), (5,0)\},\\
		B_v&=\{(2,0),(1,1),(0,2),(0,3),(0,4),(2,5)\}.
	\end{align*}

	Now $\mathfrak S_w = \mathfrak S_{21} \cdot (x_1\cdots x_6)^2 \cdot \mathfrak S_v$. Shifting $B_v$ up by $2$ and placing a representation for $\mathfrak S_{21}$ to its right gives
	\begin{align*}
		A_w &= \{(0,0), (1,0), (2,0), (3,0), (4,0), (5,0), (6,0), (7,0)\},\\
		B_w &= \{(7,0), (6,1), (2,2),(1,3),(0,4),(0,5),(0,6),(2,7)\}.
	\end{align*}
	Then $(A_w, B_w)$ is a compact lattice path representation (up to sign) for $\mathfrak S_w$. To see how the representations for $\mathfrak S_{21}$, $\mathfrak S_{321}$, and $\mathfrak S_{231}$ fit together geometrically to give the representation for $\mathfrak S_w$, see Figure~\ref{fig:87321564}.
\end{Ex}

\begin{figure}
	\begin{tikzpicture}[scale=.7]
		\foreach \x in {0,...,7}
		\foreach \y in {0,...,6}
		\draw (\x,\y)--(\x,\y+1);
		\foreach \x in {1,...,7}
		\foreach \y in {0,...,6}
		\draw (\x,\y)--(\x-1,\y+1);
		\foreach \x/\y in {7/0,6/1,2/2,1/3,0/4,0/5,0/6,2/7}
			\node[w] at (\x,\y){};
		\draw[dashed] (5.7,-.3) rectangle (7.3,1.3);
		\draw[dashed] (-.3,1.7) rectangle (2.3,4.3);
		\draw[dashed] (-.3,4.7) rectangle (2.3,7.3);
	\end{tikzpicture}
	\caption{The set $B_w$ for a compact lattice path representation for $\mathfrak S_{w}$, where $w=87321564 = 21 \ominus (321 \oplus 231)$. The dashed squares from bottom to top are translations of $B$-sets for $21$, $321$, and $231$.}
	\label{fig:87321564}
\end{figure}
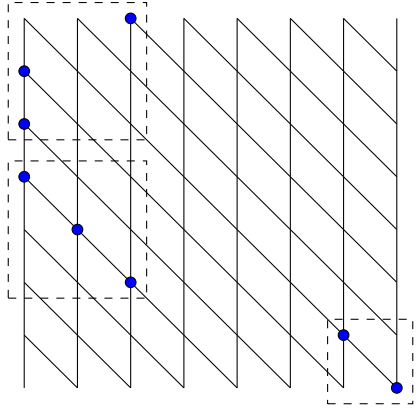

\subsection{Lowering points}

Given a lattice path representation, one can use Proposition~\ref{prop:delete} to remove lattice points at height $0$ and Proposition~\ref{prop:drop} to shift lattice points downward into empty rows, thereby generating additional representations. In this section, we will use these two operations on the collection of compact lattice path representations to construct representations for a large class of Schubert polynomials.

Application of these two operations can be described succinctly in the following way.
\begin{Def}
	We say a permutation $v \in S_n$ is a \emph{lowering permutation} if $v$ satisfies
	\[v^{-1}(1) > v^{-1}(2) > \dots > v^{-1}(k) = 1 < v^{-1}(k+1) < \cdots < v^{-1}(n)\]
	for some integer $k$. In other words, in one-line notation, $v$ contains $k(k-1) \cdots 21$ and $k(k+1)\cdots (n-1)n$ as subsequences.
\end{Def}
Equivalently, $v$ avoids the patterns $132$ and $312$. If we let $p_i = v^{-1}(i)$ for $1 \leq i \leq k$, then $v$ has the reduced expression \[v = (s_1s_2 \cdots s_{p_1-1}) \cdot (s_1s_2 \cdots s_{p_2-1}) \cdots (s_1s_2 \cdots s_{p_{k-1}-1}).\] Put another way, the effect of multiplying a permutation $u$ on the right by $v$ is to shuffle $u_k \cdots u_1$ and $u_{k+1} \cdots u_n$ by placing $u_k, \dots, u_1$ in positions $p_k, \dots, p_1$. 

The significance of these permutations to our current study lies in the following proposition.

\begin{Prop} \label{prop:lowering}
	Let $(A,B)$ be a lattice path representation of a polynomial $F$ of the form
	\begin{align*}
	A &= \{(a_1,0), (a_2,0), \dots, (a_n,0)\},\\
	B &= \{(b_1, 0), (b_2, 1), \dots (b_n, n-1)\}.
	\end{align*}
	Suppose further that $v$ is a lowering permutation with $v_1 = k$, and that $a_i=b_i$ for $i=1, \dots, k$. Then $\del_{v^{-1}}F$ has lattice path representation $(A',B')$, where
	\begin{align*}
		A' &= \{(a_{k+1},0), (a_{k+2},0), \dots, (a_n,0)\},\\
		B' &= \{(b_{k+1}, v^{-1}(k+1)-1), (b_{k+2}, v^{-1}(k+2)-1), \dots, (b_n, v^{-1}(n)-1)\}.
	\end{align*}
\end{Prop}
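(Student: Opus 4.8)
The plan is to decompose $\partial_{v^{-1}}$ into a product of divided differences according to the reduced expression for $v$ given just before the proposition, then track what each factor does to the lattice path representation using Propositions~\ref{prop:delete} and \ref{prop:drop}. Since $v$ is a lowering permutation with $v_1 = k$, write $p_i = v^{-1}(i)$, so that $p_k > p_{k-1} > \dots > p_1 = 1$ read off the positions of $1, \dots, k$ in $v$; wait — more carefully, $v^{-1}(1) > \dots > v^{-1}(k) = 1$, so $p_1 > p_2 > \dots > p_k = 1$, and $v = (s_1 \cdots s_{p_1 - 1})(s_1 \cdots s_{p_2 - 1}) \cdots (s_1 \cdots s_{p_{k-1} - 1})$, hence $\partial_{v^{-1}} = \partial_{v^{-1}}$ acts as the corresponding composite of $\partial_i$'s in the reverse order. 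The key observation is that each block $s_1 s_2 \cdots s_{p_j - 1}$ corresponds to applying $\partial_{p_j - 1} \partial_{p_j - 2} \cdots \partial_1$, which I will interpret geometrically as repeatedly sliding one endpoint of $B$ downward one row at a time.

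First I would handle the bookkeeping of which endpoints are ``active.'' The hypothesis $a_i = b_i$ for $i = 1, \dots, k$ means the points $(a_1, 0), \dots$ — but these need not be the ones forming the staircase required by Proposition~\ref{prop:delete}. So the first step is to show that after we apply the first block of divided differences, the resulting representation has a sub-configuration of the form $A' = \{(a,0), \dots, (a+s,0)\}$, $B' = \{(a,0), \dots, (a,s)\}$ that can be stripped off via Proposition~\ref{prop:delete}. Concretely, I would argue by downward induction on $k$: the outermost factor $\partial_1$ (the first letter of $s_1 s_2 \cdots s_{p_1 - 1}$, read appropriately) acts on the unique point of $B$ at some height, and by Proposition~\ref{prop:drop} it lowers that point by one. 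Iterating $\partial_1 \partial_2 \cdots$ within a block walks a single endpoint down from its height $i - 1$ down to height $p_{?} - 1$; at each stage one must check that $B$ has a \emph{unique} point at the relevant height so that Proposition~\ref{prop:drop} applies rather than the vanishing clause. This uniqueness is exactly where the lowering condition is used: because $v$ avoids $132$ and $312$, the positions interleave so that no two endpoints ever compete for the same row during the process.

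The cleanest organization, I think, is to induct on $k$ (the value $v_1$). When $k = 1$, $v$ is the identity, $\partial_{v^{-1}} = \mathrm{id}$, and there is nothing to prove (the conclusion reads off directly). For $k > 1$, peel off the first point: since $a_1 = b_1$, the point $(b_1, 0) \in B$ lies at height $0$ with $a_1 = b_1$, so $\{(a_1, 0)\} \subseteq A$ and $\{(b_1, 0)\} \subseteq B$ form a (length-zero) staircase, and Proposition~\ref{prop:delete} lets us delete both. But that deletes a point we may still need; instead I would first apply the block of $\partial_i$'s that moves the remaining relevant endpoints, \emph{then} delete. So the real step is: apply $\partial_{p_1 - 1} \cdots \partial_1$ — no, let me restructure. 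The honest approach: realize $v = v' \cdot (s_1 \cdots s_{?})$ where $v'$ is the lowering permutation on the smaller index set obtained by removing $1$ from the pattern, apply the inductive hypothesis to $v'$, and then handle the single extra block, which by Propositions~\ref{prop:drop} and \ref{prop:pull} (i.e.\ Proposition~\ref{prop:slide}) slides exactly one endpoint from height $0$ down — wait, it cannot go below $0$; rather it moves the endpoint that started at height $v^{-1}(1) - 1$ repeatedly. I would match the arithmetic so that the endpoint originally at $(b_i, i-1)$ ends at $(b_i, v^{-1}(i) - 1)$, which is a genuine lowering since $v^{-1}(i) \le i$ for the indices in question.

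\textbf{Main obstacle.} The hard part will be verifying the uniqueness-of-height hypothesis needed to invoke Proposition~\ref{prop:drop} at every intermediate step, and simultaneously ensuring that the staircase hypothesis of Proposition~\ref{prop:delete} is met when we discard the points indexed $1, \dots, k$. Both reduce to a careful combinatorial claim about how the heights $v^{-1}(1) - 1, \dots, v^{-1}(n) - 1$ are distributed relative to the starting heights $0, 1, \dots, n-1$ — essentially that the lowering process never creates a collision. I expect this to follow from the explicit reduced word for $v$ together with the characterization of lowering permutations as $132$- and $312$-avoiding, but pinning down the induction so that the translation parameters $a$ and $s$ in Proposition~\ref{prop:delete} come out exactly right (in particular that $b_i \le$ the appropriate bound, analogous to the $b_i \le m - i$ estimate used in the proof of Theorem~\ref{thm:compact}) is the delicate point. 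Once that combinatorial claim is isolated and proved, the rest is a routine composition of the operations from \S3.
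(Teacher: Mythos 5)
Your overall strategy --- decompose $\del_{v^{-1}}$ along the reduced word $v=(s_1\cdots s_{p_1-1})\cdots(s_1\cdots s_{p_{k-1}-1})$ and realize each factor via Propositions~\ref{prop:delete} and \ref{prop:drop} --- is exactly the paper's. But your working picture of what a single block does is wrong, and because of that the verification you defer (and correctly identify as the crux) never gets done. You say a block $\del_{p_j-1}\cdots\del_2\del_1$ ``walks a single endpoint down'' several rows. It does not: dropping one endpoint from height $c$ to $c-1$ and then to $c-2$ would produce $\del_{c-1}\del_c$, i.e., operator indices \emph{decreasing} in order of application, whereas each block applies $\del_1$ first, then $\del_2$, and so on. The correct picture is: deleting $(a_j,0)=(b_j,0)$ (Proposition~\ref{prop:delete} with $s=0$) leaves height $0$ empty; then $\del_1$ lowers the unique point at height $1$ into that hole, $\del_2$ lowers the unique point at height $2$, and so on up to $\del_{p_j-1}$. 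So each block moves $p_j-1$ \emph{different} endpoints down by one row each, propagating the hole upward to height $p_j-1$; an individual endpoint $(b_i,\cdot)$ descends one row per block over several successive blocks, ending at height $v^{-1}(i)-1$.

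Once you adopt that picture, the two things you flag as ``the delicate point'' are immediate, and neither requires the combinatorial analysis you anticipate. Distinctness of heights is preserved automatically because every move sends a point into a height vacated at the immediately preceding step, so Proposition~\ref{prop:drop}'s uniqueness hypothesis holds throughout; the $132$- and $312$-avoidance of $v$ is used only to produce the reduced word, not for a separate no-collision argument. The staircase hypothesis of Proposition~\ref{prop:delete} at the start of phase $j$ needs only the single point $(b_j,0)$ at height $0$ with $a_j=b_j$: the point sitting at height $0$ at that moment really is $(b_j,\cdot)$ because $p_1>p_2>\cdots>p_k=1$ forces $p_m\ge k-m+1$, so $(b_j,\cdot)$ gets lowered in each of the first $j-1$ blocks and arrives at height $0$ exactly when needed, and $a_j=b_j$ is the stated hypothesis. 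No bound on horizontal coordinates analogous to the $b_i\le m-i$ estimate is needed --- Propositions~\ref{prop:drop} and \ref{prop:delete} impose no such condition here. As written, your proposal names the right tools but leaves the central claim unproved and supplies an incorrect mechanism for establishing it.
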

\begin{proof}
	Let $p_i = v^{-1}(i)$. By Proposition~\ref{prop:delete}, removing $(a_1,0) = (b_1,0)$ from $A$ and $B$ yields a lattice path representation for $F$. Then by Proposition~\ref{prop:drop}, lowering each of the points $(b_i,i-1)$ to $(b_i,i-2)$ for $i = 2, \dots, p_1$ yields a lattice path representation for $\del_{p_1-1} \cdots \del_2\del_1 F$ with points at heights $\{0, 1, \dots, n-1\} \setminus \{p_1-1\}$.
	
	We can then repeat this process by removing $(a_2,0) = (b_2,0)$ from both $A$ and $B$ and then lowering the points $(b_i, i-2)$ to $(b_i, i-3)$ for $i = 3, \dots, p_2$, giving a lattice path representation for $(\del_{p_2-1} \cdots \del_2\del_1)(\del_{p_1-1} \cdots \del_2\del_1)F$ with points at heights $\{0, 1, \dots, n-1\} \setminus \{p_1-1,p_2-1\}$. Continuing in this manner, we arrive at a lattice path representation for $\del_{v^{-1}} F$ with points at heights $\{0, 1, \dots, n-1\} \setminus \{p_1-1,\dots, p_k-1\} = \{v^{-1}(k+1)-1, \dots, v^{-1}(n)-1\}$, as desired.
\end{proof}
For an illustration, see Figure~\ref{fig:lowering} as well as Example~\ref{ex:lowering} below.

Note that any compact lattice path representation (up to reordering the elements of $A$ and $B$) has the form required in Proposition~\ref{prop:lowering}. Therefore, combining Proposition~\ref{prop:lowering} with Theorem~\ref{thm:compact} gives the following result.

\begin{Th} \label{thm:uv}
	Let $u,v \in S_n$ be permutations such that $u$ avoids the patterns $1324$, $2413$, and $3142$, $v$ avoids the patterns $132$ and $312$, and $\ell(uv) = \ell(u)-\ell(v)$. Then $\mathfrak S_{uv}$ has a proper lattice path representation.
\end{Th}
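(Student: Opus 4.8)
The plan is to combine Theorem~\ref{thm:compact} with Proposition~\ref{prop:lowering} essentially mechanically, with the only real work being to verify the hypotheses of Proposition~\ref{prop:lowering} and to identify $\del_{v^{-1}}(\mathfrak S_u)$ with $\mathfrak S_{uv}$. First, since $u$ avoids $1324$, $2413$, and $3142$, Theorem~\ref{thm:compact} provides a compact lattice path representation $(A,B)$ of $\mathfrak S_u$. By definition of compactness, $\{a_1,\dots,a_n\} = \{c_1,\dots,c_n\} = \{0,\dots,n-1\}$, so after reindexing the (unordered) sets $A$ and $B$ we may assume $B = \{(b_1,0),(b_2,1),\dots,(b_n,n-1)\}$, which is exactly the shape required in Proposition~\ref{prop:lowering}. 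This is where we also need the hypothesis $a_i = b_i$ for $i = 1,\dots,k$ where $k = v_1 = v^{-1}(1)$ is the first entry of $v$ in one-line notation; I would note that this is precisely the condition that the lowest $k$ endpoints of $B$ lie directly above the corresponding points of $A$, i.e.\ that the first $k$ paths are forced to be all-diagonal, and argue that after a suitable reordering of the compact representation this holds (cf.\ Proposition~\ref{prop:delete}, which is what gets invoked inside the proof of Proposition~\ref{prop:lowering}). If the compact representation produced by Theorem~\ref{thm:compact} does not automatically have this property, one adjusts the labeling; the key point is that a compact representation always has points at heights $0,\dots,k-1$ sitting in a staircase configuration with matching $a$-coordinates because the $b_i$ form a parking function.

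Next, I would invoke Proposition~\ref{prop:lowering} directly: since $v$ avoids $132$ and $312$, it is a lowering permutation, so $\del_{v^{-1}}(\mathfrak S_u)$ has the lattice path representation $(A',B')$ described there, with heights $\{v^{-1}(k+1)-1,\dots,v^{-1}(n)-1\}$. Because $v$ is a lowering permutation, these values $v^{-1}(k+1) < v^{-1}(k+2) < \cdots < v^{-1}(n)$ are strictly increasing, hence distinct, so the resulting representation $(A',B')$ is proper. The remaining step is to identify $\del_{v^{-1}}(\mathfrak S_u)$ with $\mathfrak S_{uv}$. For this I would use the standard fact that $\del_w(\mathfrak S_u) = \mathfrak S_{uw^{-1}}$ whenever $\ell(uw^{-1}) = \ell(u) - \ell(w)$ and $\del_w(\mathfrak S_u) = 0$ otherwise; applying this with $w = v^{-1}$, the hypothesis $\ell(uv) = \ell(u) - \ell(v) = \ell(u) - \ell(v^{-1})$ gives exactly $\del_{v^{-1}}(\mathfrak S_u) = \mathfrak S_{u(v^{-1})^{-1}} = \mathfrak S_{uv}$. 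Hence $(A',B')$ is a proper lattice path representation of $\mathfrak S_{uv}$.

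The main obstacle I anticipate is the bookkeeping around the hypothesis $a_i = b_i$ for $i \le k$: one must be careful that the compact representation from Theorem~\ref{thm:compact} can genuinely be ordered so that the $k$ lowest endpoints sit on the diagonal above $A$, and that $k$ here really is $v_1$ (equivalently $v^{-1}(1)$), matching the indexing convention in Proposition~\ref{prop:lowering}. A secondary subtlety is making sure the divided-difference identity $\del_{v^{-1}}(\mathfrak S_u) = \mathfrak S_{uv}$ is applied with the correct length condition — the hypothesis is stated as $\ell(uv) = \ell(u) - \ell(v)$, and since $\ell(v) = \ell(v^{-1})$ and a reduced word for $v^{-1}$ is the reverse of one for $v$, the operator $\del_{v^{-1}}$ is well-defined and the length drop is exactly what is needed for the recursive definition of Schubert polynomials to yield $\mathfrak S_{uv}$ rather than $0$. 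Everything else is a direct citation of the propositions already established.
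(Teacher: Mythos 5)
Your proof follows exactly the same route as the paper's (which is only three sentences long): apply Theorem~\ref{thm:compact} to get a compact representation of $\mathfrak S_u$, feed it into Proposition~\ref{prop:lowering}, note that the resulting heights $v^{-1}(k+1)-1<\cdots<v^{-1}(n)-1$ are distinct, and use the length condition to identify $\del_{v^{-1}}\mathfrak S_u$ with $\mathfrak S_{uv}$. One caveat on the step you rightly single out as the main obstacle: the parking-function property does \emph{not} imply that $b_1,\dots,b_k$ are distinct (e.g.\ the compact representation of $\mathfrak S_{132}$ has $b_1=b_2=0$), so it cannot by itself justify the hypothesis $a_i=b_i$ for $i\le k$. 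What actually saves the argument is the length hypothesis: by Proposition~\ref{prop:length} it forces $u_1>u_2>\cdots>u_k$, and one checks from the recursive construction in Theorem~\ref{thm:compact} that then $b_i=u_i-1$ for $i\le k$, which are distinct; the paper glosses over this point as well, so your proposal is no less complete than the original.
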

\begin{proof}
	By Theorem~\ref{thm:compact}, $\mathfrak S_u$ has a compact lattice path representation. Since $v$ is a lowering permutation, Proposition~\ref{prop:lowering} implies that $\partial_{v^{-1}} \mathfrak S_u$ has a proper lattice path representation. The length condition then implies $\partial_{v^{-1}} \mathfrak S_u = \mathfrak S_{uv}$.
\end{proof}

The following proposition gives an explicit description of when the length condition in Theorem~\ref{thm:uv} holds.

\begin{Prop} \label{prop:length}
	Let $u, v \in S_n$ be permutations such that $v$ is a lowering permutation. Suppose $v_1 = k$ and let $p_i = v^{-1}(i)$. If $w=uv$, then $\ell(w) = \ell(u)-\ell(v)$ if and only if $w_{p_i}$ is a left-to-right maximum of $w$ for all $i=1, \dots, k$.
\end{Prop}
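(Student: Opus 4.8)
The statement is purely about permutations, so the plan is to prove it directly: convert the identity $\ell(uv)=\ell(u)-\ell(v)$ into a containment of inversion sets, make that containment explicit using the lowering structure of $v$, and then recognize the resulting list of inequalities as the left-to-right maximum condition on $w=uv$. Throughout write $\operatorname{Inv}(\pi)=\{(a,b)\mid a<b,\ \pi_a>\pi_b\}$, so that $\ell(\pi)=|\operatorname{Inv}(\pi)|$.

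First I would record the elementary fact that for all $u,v\in S_n$,
\[\ell(uv)=\ell(u)-\ell(v)\iff \operatorname{Inv}(v^{-1})\subseteq\operatorname{Inv}(u).\]
This is standard (it says $uv\le u$ in the left weak order, with lengths subtracting), but a self-contained argument is short: splitting the pairs $a<b$ according to whether $v(a)<v(b)$ identifies $\ell(uv)$ with $|\operatorname{Inv}(u)\,\triangle\,\operatorname{Inv}(v^{-1})|$, and since $\ell(uv)\ge \ell(u)-\ell(v)$ always holds, equality occurs exactly when $\operatorname{Inv}(v^{-1})\subseteq\operatorname{Inv}(u)$.

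Next I would make $\operatorname{Inv}(v^{-1})$ explicit. Put $p_i=v^{-1}(i)$ for $i\le k$ and $q_l=v^{-1}(k+l)$ for $l\ge 1$; the defining inequalities of a lowering permutation give $p_1>\cdots>p_k=1$ and $q_1<\cdots<q_{n-k}$, with $\{p_i\}\sqcup\{q_l\}=\{1,\dots,n\}$, so $v^{-1}=(p_1,\dots,p_k,q_1,\dots,q_{n-k})$ in one-line notation and
\[\operatorname{Inv}(v^{-1})=\{(a,b)\mid a<b\le k\}\ \cup\ \{(a,k+l)\mid a\le k,\ p_a>q_l\}.\]
From $w=uv$ one reads off $w_{p_i}=u_i$ for $i\le k$ and $w_{q_l}=u_{k+l}$ for $l\ge 1$. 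Using the displayed description, $\operatorname{Inv}(v^{-1})\subseteq\operatorname{Inv}(u)$ is then equivalent to the two conditions ``$u_1>u_2>\cdots>u_k$'' and ``$u_a>u_{k+l}$ whenever $p_a>q_l$''. On the other hand, the positions $m<p_i$ are exactly the $p_{i'}$ with $i'>i$ together with the $q_l$ with $q_l<p_i$, so requiring each $w_{p_i}$ ($i=1,\dots,k$) to be a left-to-right maximum of $w$ yields precisely those same two conditions (the first from the $p_{i'}$-positions, the second from the $q_l$-positions). Combined with the criterion from the previous step, this finishes the proof.

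I expect the only real obstacle to be organizational: one must keep straight three coordinate systems --- positions of $u$, positions of $w$, and the values $p_i,q_l$ coming from $v^{-1}$ --- and take care that each inequality points the right way (the reversal built into a lowering permutation is what makes ``$p_a>q_l$'', rather than its opposite, the relevant trigger). Everything else --- length subadditivity, the inversion-set criterion, and the one-line form of $v^{-1}$ --- is routine, so once the dictionary $w_{p_i}=u_i$, $w_{q_l}=u_{k+l}$ is in place the matching of the two lists of conditions is essentially forced.
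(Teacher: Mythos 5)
Your proof is correct, but it takes a genuinely different route from the paper's. The paper argues via the explicit reduced expression $v=(s_1\cdots s_{p_1-1})(s_1\cdots s_{p_2-1})\cdots(s_1\cdots s_{p_{k-1}-1})$: right multiplication by $v$ successively shifts $u_1,\dots,u_k$ into positions $p_1,\dots,p_k$, and the lengths subtract exactly when each $u_i$ moves only past smaller letters, which is visibly the left-to-right-maximum condition on $w_{p_i}$. You instead invoke the weak-order criterion $\ell(uv)=\ell(u)-\ell(v)\iff\operatorname{Inv}(v^{-1})\subseteq\operatorname{Inv}(u)$, compute $\operatorname{Inv}(v^{-1})$ explicitly from the lowering structure, and match the resulting inequalities against the left-to-right-maximum condition via the dictionary $w_{p_i}=u_i$, $w_{q_l}=u_{k+l}$; I checked the bookkeeping (the symmetric-difference identity for $\ell(uv)$, the description of $\operatorname{Inv}(v^{-1})$, and the identification of the positions below $p_i$ as the $p_{i'}$ with $i'>i$ together with the $q_l<p_i$) and it all goes through. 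The paper's argument is shorter and ties directly into the shifting picture reused in the proof of Theorem~\ref{thm:main}, but it is stated somewhat informally; your version is more systematic, makes the ``if and only if'' fully explicit, and does not require writing down a reduced word for $v$. Either is acceptable.
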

\begin{proof}
	The effect of multiplying $u$ by 
	\[v = (s_1s_2 \cdots s_{p_1-1}) \cdot (s_1s_2 \cdots s_{p_2-1}) \cdots (s_1s_2 \cdots s_{p_{k-1}-1})\]
	is to shift $u_1$ to position $p_1$, then shift $u_2$ to position $p_2$, and so forth. The length condition will then be satisfied if while shifting $u_i$, it only moves past smaller letters. This occurs exactly when $w_{p_i}$ is a left-to-right maximum of $w$.
\end{proof}

Note that the $w_{p_i}$ need only be a subset of the left-to-right maxima of $w$, not the entire set of them.

\begin{Ex} \label{ex:lowering}
	Let $u = 87321564$ as in Example~\ref{ex:87321564}, and let $v = 34562718$, so that $p_1 = 7$, $p_2 = 5$, and $p_3=1$. Then $w=uv = 32157684$. Since $w_1=3$, $w_5=7$, and $w_7=8$ are left-to-right maxima, $\ell(w) = \ell(u)-\ell(v)$. 
	
	From Example~\ref{ex:87321564}, $\mathfrak S_u$ has compact lattice path representation
	\begin{align*}
		A_u &= \{(0,0), (1,0), (2,0), (3,0), (4,0), (5,0), (6,0), (7,0)\},\\
		B_u &= \{(7,0), (6,1), (2,2),(1,3),(0,4),(0,5),(0,6),(2,7)\}.
	\end{align*}
	By Proposition~\ref{prop:lowering}, $\mathfrak S_w$ then has lattice path representation
	\begin{align*}
		A_u &= \{(0,0), (1,0), (3,0), (4,0), (5,0)\},\\
		B_u &= \{(1,1),(0,2),(0,3),(0,5),(2,7)\}.
	\end{align*}
	See Figure~\ref{fig:lowering} for an illustration.
\end{Ex}

\begin{figure}
	\begin{tikzpicture}[scale=.7]
		\foreach \x in {0,...,7}
		\foreach \y in {0,...,6}
		\draw (\x,\y)--(\x,\y+1);
		\foreach \x in {1,...,7}
		\foreach \y in {0,...,6}
		\draw (\x,\y)--(\x-1,\y+1);
		\foreach \x in {0, ..., 6}
		\node[v] at (\x,0){};
		\foreach \x/\y in {6/1,2/2,1/3,0/4,0/5,0/6,2/7}
		\node[w] at (\x,\y){};
		\node[v, fill=violet] at (7,0){};
		\draw[->] (7.5,3.5)--(8.5,3.5);
		\begin{scope}[shift={(9,0)}]
		\foreach \x in {0,...,7}
		\foreach \y in {0,...,6}
		\draw (\x,\y)--(\x,\y+1);
		\foreach \x in {1,...,7}
		\foreach \y in {0,...,6}
		\draw (\x,\y)--(\x-1,\y+1);	
		\foreach \x in {0, 1, 3, 4, 5}
		\node[v] at (\x,0){};
		\foreach \x/\y in {1/1,0/2,0/3,0/5,2/7}
		\node[w] at (\x,\y){};
		\end{scope}
	\end{tikzpicture}
	\caption{Illustration of Proposition~\ref{prop:lowering}. On the left is a proper lattice path representation for $\mathfrak S_u$ (where $u=87321564$), and on the right is a lattice path representation for $\mathfrak S_{uv}$, where $v=34562718$.}
	\label{fig:lowering}
\end{figure}
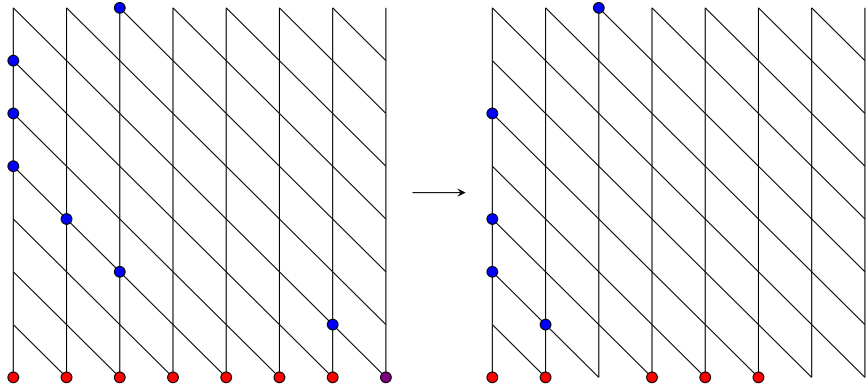

	As another illustrative example, we consider the case of $321$-avoiding permutations, whose Schubert polynomials are known to be flagged skew Schur polynomials \cite{BJS}.
	
\begin{Cor} \label{cor:321}
	Let $w \in S_n$ be a $321$-avoiding permutation. Let $\bar q_1 < \cdots < \bar q_{n-k}$ be the elements of $[n]$ that are \emph{not} left-to-right maxima of $w$, and let $\bar p_i = w^{-1}(\bar q_i)$. Then $\mathfrak S_w$ has lattice path representation $(A,B)$, where
	\begin{align*}
		A&= \{(\bar q_1-1,0), (\bar q_2-1,0), \dots, (\bar q_{n-k}-1,0)\},\\
		B&= \{(0, \bar p_1-1), (0, \bar p_2-1), \dots, (0, \bar p_{n-k}-1)\},
	\end{align*}
	and therefore
	\[\mathfrak S_w = \det (e_{\bar p_j - \bar q_i}^{(\bar p_j-1)})_{i,j=1}^k. \]
\end{Cor}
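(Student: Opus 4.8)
The plan is to realize $w$ as a product $uv$ of a dominant permutation $u$ and a lowering permutation $v$, and then run the explicit compact representation of $\mathfrak S_u$ from Lemma~\ref{lemma:132} through Proposition~\ref{prop:lowering}. To set up the decomposition, write $q_1 < \cdots < q_k$ for the left-to-right maxima of $w$, occurring at positions $r_1 < \cdots < r_k$ (so $q_j = w_{r_j}$ and $r_1 = 1$), and write $\bar q_1 < \cdots < \bar q_{n-k}$ for the remaining values, occurring at positions $\bar p_1 < \cdots < \bar p_{n-k}$. Since $w$ avoids $321$, I would first observe that the non-left-to-right maxima occur in increasing order of value, so $w_{\bar p_i} = \bar q_i$, matching the notation in the statement. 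I would then set
\[u = (q_k,\, q_{k-1},\, \dots,\, q_1,\, \bar q_1,\, \bar q_2,\, \dots,\, \bar q_{n-k}) \in S_n, \qquad v = u^{-1} w,\]
and verify: (i) $u$ is dominant, by a short inspection of potential $132$-patterns using that the first $k$ entries of $u$ decrease and the last $n-k$ increase; (ii) $v$ is a lowering permutation with $v_1 = k$, by computing directly that $v^{-1}(i) = r_{k+1-i}$ for $i \le k$ and $v^{-1}(k+j) = \bar p_j$ for $j \le n-k$, which is exactly the defining inequality chain of a lowering permutation (here $r_1 = 1$, and each of $r_1 < \cdots < r_k$ and $\bar p_1 < \cdots < \bar p_{n-k}$ is increasing); and (iii) $\ell(w) = \ell(u) - \ell(v)$, which follows from Proposition~\ref{prop:length} since $w_{v^{-1}(i)} = w_{r_{k+1-i}} = q_{k+1-i}$ is a left-to-right maximum of $w$ for each $i \le k$.

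Next I would compute the code of $u$. By Lemma~\ref{lemma:132}, $\pm\mathfrak S_u$ has a compact lattice path representation whose $B$-set is $\{(c_i(u), i-1) : 1 \le i \le n\}$, where $c(u)$ is the code of $u$. The key computation is that $c_i(u) = 0$ for $i > k$, while $c_i(u) = (k-i) + \#\{s : \bar q_s < q_{k+1-i}\} = q_{k+1-i} - 1$ for $i \le k$, the last equality being the elementary identity $q_t = t + \#\{s : \bar q_s < q_t\}$ (both sides count $[q_t]$, split according to whether an element is a left-to-right maximum of $w$). In particular $c_1(u) > \cdots > c_k(u)$, so I can reorder the $A$-set $\{(0,0), \dots, (n-1,0)\}$ of this representation as $(a_1,0), \dots, (a_n,0)$ with $a_i = c_i(u)$ for $i \le k$ and $a_{k+j} = \bar q_j - 1$ for $1 \le j \le n-k$; this is legitimate because $\{c_1(u), \dots, c_k(u)\} = \{q_1 - 1, \dots, q_k - 1\}$ is the complement of $\{\bar q_1 - 1, \dots, \bar q_{n-k}-1\}$ in $\{0, \dots, n-1\}$.

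Now the representation has exactly the shape required by Proposition~\ref{prop:lowering} for the lowering permutation $v$ (with $v_1 = k$ and $a_i = b_i$ for $i \le k$), and applying that proposition yields a lattice path representation of $\del_{v^{-1}}\mathfrak S_u = \mathfrak S_{uv} = \mathfrak S_w$ with point sets
\[A' = \{(\bar q_1 - 1, 0), \dots, (\bar q_{n-k} - 1, 0)\}, \qquad B' = \{(0, \bar p_1 - 1), \dots, (0, \bar p_{n-k} - 1)\},\]
using $c_{k+j}(u) = 0$ and $v^{-1}(k+j) = \bar p_j$. This is the representation in the statement, up to an overall sign coming from the reordering and from the ambiguous sign in Lemma~\ref{lemma:132}. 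To pin the sign down, I would order $A'$ by increasing first coordinate and $B'$ by increasing height and check that every collection of nonintersecting paths induces the identity permutation: if a collection realized an inversion, the path starting farther to the left would be forced to end higher, and comparing the two paths level by level forces them to share a vertex, a contradiction. Hence the signed sum equals the (coefficientwise nonnegative) unsigned sum of path weights; since it equals $\pm\mathfrak S_w$ and $\mathfrak S_w$ is a nonzero polynomial with nonnegative coefficients, it equals $\mathfrak S_w$. The determinantal identity then follows by applying Proposition~\ref{edet} to $(A', B')$, whose $(i,j)$ entry is $e^{(\bar p_j - 1)}_{(\bar p_j - 1) - (\bar q_i - 1)} = e^{(\bar p_j - 1)}_{\bar p_j - \bar q_i}$.

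The substantive part is the first paragraph: guessing $u$ and $v$ and verifying that $u$ is dominant, which is the one place where $321$-avoidance is genuinely used, together with the code bookkeeping of the second paragraph. Everything after that is a routine application of the operations of \S3, and the concluding sign check, while slightly fiddly, is of a standard type for nonintersecting path arguments.
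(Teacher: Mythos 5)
Your proof is correct and follows essentially the same route as the paper: the same decomposition $w=uv$ with $u$ the dominant permutation listing the left-to-right maxima in decreasing order followed by the remaining letters in increasing order, followed by Lemma~\ref{lemma:132} and Proposition~\ref{prop:lowering}. You supply several details the paper leaves implicit (the code computation for $u$, the relabeling of $A$, and the sign check), all of which are accurate; the only quibble is your closing remark that $321$-avoidance is used to make $u$ dominant — in fact $u$ is automatically dominant, and $321$-avoidance is what forces $\bar p_1<\cdots<\bar p_{n-k}$ so that $v$ is a lowering permutation, as your first paragraph correctly uses.
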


\begin{proof}	
	Let $w \in S_n$ have left-to-right maxima in positions $p_1 > p_2 > \dots > p_k=1$, and let $q_i = w_{p_i}$ be the values of these maxima (so that $q_1 > q_2 > \dots > q_k$).

	Since $w$ is $321$-avoiding, the letters $\bar q_1, \dots, \bar q_{n-k}$ must appear in increasing order in $w$, so $\bar p_1< \dots< \bar p_{n-k}$. Let $v$ be the lowering permutation with $v^{-1} = p_1 \cdots p_k \bar p_1 \dots \bar p_{n-k}$. If we let $u=wv^{-1}$, then $u=q_1 \cdots q_k\bar q_1 \cdots \bar q_{n-k}$ and $\ell(w) = \ell(u)-\ell(v)$ by Proposition~\ref{prop:length}.
	
	Now observe that $u$ is $132$-avoiding and $c(u)=(q_1-1, \dots, q_k-1, 0, \dots , 0)$, so by Lemma~\ref{lemma:132}, $\pm\mathfrak S_u$ has lattice path representation $(A',B')$, where
	\begin{align*}
		A'&= \{(n-1,0), (n-2,0), \dots, (0,0)\},\\
		B'&= \{(q_1-1,0), (q_2-1,1), \dots, (q_k-1,k-1), (0,k), \dots,  (0,n-1)\}.
	\end{align*}
	Applying Proposition~\ref{prop:lowering}, we find that $\mathfrak S_w$ has lattice path representation $(A,B)$, as desired. (The sign is easily seen to be positive.) The determinantal formula then follows from Proposition~\ref{edet}.
\end{proof}

Since Grassmannian permutations are special cases of $321$-avoiding permutations, Corollary~\ref{cor:321} specializes to a formula for Schur polynomials akin to the dual Jacobi-Trudi identity, as also shown in \cite{Kirillov, Winkel}. (Compare the following to Proposition~\ref{prop:jacobitrudi}.)

\begin{Cor} \label{cor:grassmannian}
	Let $\lambda$ be a partition with largest part $r$. Then the Schur polynomial $s_\lambda(x_1, \dots, x_n)$ is given by the determinant
	\[s_\lambda(x_1, \dots, x_n) = \det(e^{(n+j-1)}_{\lambda_i'+j-i})_{i,j=1}^r.\]
\end{Cor}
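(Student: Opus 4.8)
The plan is to realize the Schur polynomial $s_\lambda(x_1,\dots,x_n)$ as $\mathfrak S_w$ for a suitable Grassmannian permutation $w$ and then apply Corollary~\ref{cor:321}. Recall from the background section that if $w$ is Grassmannian with descent at position $r$ (wait---actually the descent is at the position separating the two increasing runs), then $\mathfrak S_w = s_\mu(x_1,\dots,x_r)$ for an appropriate partition. So first I would choose $w \in S_N$ (for $N$ large enough, e.g. $N = n+r$) to be the Grassmannian permutation with a single descent at position $n$ whose associated partition is exactly $\lambda$; concretely, $w_i = \lambda_{n+1-i} + i$ for $i \le n$ and $w$ increasing afterward. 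This forces $s_\lambda(x_1,\dots,x_n) = \mathfrak S_w$ since the partition has at most $n$ parts.

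Next I would identify the data entering Corollary~\ref{cor:321} for this particular $w$. The left-to-right maxima of a Grassmannian permutation with descent at $n$ are exactly the entries in positions $1,\dots,n$ (each run is increasing, and the first run's values all exceed nothing to their left in a way that makes them maxima---one must check this, but it holds because $w_1 < \dots < w_n$ and these are the ``large'' values). Hence the positions that are \emph{not} left-to-right maxima are $n+1, n+2, \dots$, i.e. among the relevant indices $\bar p_i = n+i$ for $i = 1, \dots, r$ (only $r$ of them matter since the determinant is $r \times r$), and the non-maximal values $\bar q_i$ are the values appearing in the second run. The key computation is then to check that $\bar q_i - 1$ and $\bar p_j - 1$ translate into the exponents $\lambda_i' + j - i$ and the superscripts $n + j - 1$ appearing in the claimed formula. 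Tracking the conjugate partition: the value in position $n+i$ of the second run should work out to $i + \#\{\text{parts of }\lambda \text{ that are} \ge \dots\}$, which is precisely where $\lambda'$ enters; I would verify $\bar p_j - \bar q_i = \lambda_i' + j - i$ and $\bar p_j - 1 = n + j - 1$ by a direct index chase using $\bar p_j = n + j$ and the explicit description of $w$'s second run in terms of the "missing" values of $\{w_1,\dots,w_n\}$ within $[N]$.

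I expect the main obstacle to be purely bookkeeping: correctly setting up the Grassmannian permutation and its one-line notation so that the left-to-right maxima, the complementary values $\bar q_i$, and their positions $\bar p_i$ line up with the conjugate-partition indexing in the target formula. There is also a mild subtlety in that Corollary~\ref{cor:321} produces a $k \times k$ determinant where $k$ is the number of left-to-right maxima (here $k = n$), whereas the target formula is $r \times r$; the reconciliation is that the extra rows and columns contribute a block that is upper-triangular with $1$'s on the diagonal (coming from the points of $B$ at small heights, as in the remark that $1$ has many lattice path representations), so the determinant collapses to the $r \times r$ minor. Alternatively, and perhaps more cleanly, I would instead feed Corollary~\ref{cor:321} a Grassmannian $w$ chosen so that it has exactly $r$ non-maxima to begin with, bypassing the truncation step entirely. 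Once the dictionary is fixed, the determinantal identity is immediate from Corollary~\ref{cor:321}, with the sign positive as already noted there.
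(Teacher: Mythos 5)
Your proposal is correct and follows essentially the same route as the paper: realize $s_\lambda(x_1,\dots,x_n)$ as $\mathfrak S_w$ for the Grassmannian permutation $w=q_1\cdots q_n\bar q_1\cdots\bar q_r\in S_{n+r}$ with $q_i=\lambda_{n+1-i}+i$ and $\bar q_i=n-\lambda_i'+i$, note that the non-left-to-right-maxima are exactly the second run with $\bar p_i=n+i$, and apply Corollary~\ref{cor:321}. The truncation issue you worry about does not arise, since the determinant in Corollary~\ref{cor:321} is indexed by the non-maxima (so it is already $r\times r$ here); your ``cleaner alternative'' is precisely what the paper does.
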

\begin{proof}
	The Schur polynomial $s_\lambda(x_1, \dots, x_n)$ is equal to the Schubert polynomial $\mathfrak S_w$, where $w \in S_{n+r}$ is the Grassmannian permutation $q_1q_2 \cdots q_n \bar q_1 \bar q_2 \cdots \bar q_r$, where $q_i = \lambda_{n+1-i}+i$ and $\bar q_i = n-\lambda'_i+i$. Since the left-to-right maxima are precisely $q_1, \dots, q_n$ and $\bar p_i = w^{-1}(\bar q_i)=n+i$, the result follows from Corollary~\ref{cor:321}.
\end{proof}

One can also deduce Corollary~\ref{cor:grassmannian} by interpreting the usual dual Jacobi-Trudi identity (Proposition~\ref{prop:jacobitrudi}) as a lattice point representation (albeit not a proper one) and applying Proposition~\ref{prop:pull} repeatedly to turn it into a proper representation.

\subsection{Pattern avoidance criterion}

In this section, we will give an explicit description of the permutations to which Theorem~\ref{thm:uv} applies via the following theorem.

\begin{Th} \label{thm:main}
	A permutation $w$ has a factorization of the form $w=uv$ as in Theorem~\ref{thm:uv} if and only if it avoids the following 13 patterns:
\[	51324, \quad
	15324, \quad
	52413, \quad
	25413, \quad
	53142, \quad
	35142, \quad
	31542,\]\[
	143265, \quad
	143625, \quad
	143652, \quad
	146352, \quad
	413265, \quad
	413625.\]
	Therefore, for any such permutation $w$, $\mathfrak S_w$ has a proper lattice path representation.
\end{Th}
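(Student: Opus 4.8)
The plan is to characterize, via pattern avoidance, exactly which permutations $w$ admit a factorization $w = uv$ where $u$ avoids $1324$, $2413$, $3142$ and $v$ is a lowering permutation (avoids $132$, $312$) with $\ell(uv) = \ell(u) - \ell(v)$. By Proposition~\ref{prop:length}, the length condition says precisely that there is a choice of positions $p_1 > p_2 > \dots > p_k = 1$ (the positions where $v$ "inserts" the letters $u_1, \dots, u_k$) such that $w_{p_i}$ is a left-to-right maximum of $w$ for each $i$, and $u$ is recovered from $w$ by \emph{extracting} the entries $w_{p_1}, \dots, w_{p_k}$ and placing them (in that order) at the front. So the first step is to translate the existence of a good factorization into: \emph{there exists a subset $T$ of the left-to-right maxima of $w$, with $1 \in \{$positions of $T\}$ — more precisely containing the position of the smallest left-to-right maximum — such that the permutation $u = u(w,T)$ obtained by moving the $T$-entries to the front (keeping their relative order) and leaving the rest in place avoids $1324$, $2413$, $3142$.} The extreme choice $T = \{$all left-to-right maxima$\}$ is the natural candidate to try first, since moving more maxima to the front tends to destroy the forbidden patterns $2413$, $3142$, $1324$ that straddle a maximum; I would check whether taking $T$ maximal is \emph{always} optimal, which would reduce the problem to a single permutation $u^{\max}(w)$ depending only on $w$.

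Granting that reduction, the next step is the combinatorial heart: determine when $u^{\max}(w)$ (or some $u(w,T)$) contains one of $1324$, $2413$, $3142$, and show this happens iff $w$ contains one of the 13 listed patterns. One direction is a finite check: for each of the 13 patterns $p$, exhibit that \emph{every} factorization attempt fails — i.e. for every admissible $T$, the permutation $u(p,T)$ contains a forbidden pattern — and then argue that pattern containment is inherited (if $w$ contains $p$, restrict to the relevant positions and use that left-to-right maxima and the front-moving operation behave well under taking the induced subpermutation on a set of positions that includes position $1$). This inheritance argument is the technical point requiring care: the left-to-right maxima of an induced subword need not be the restriction of the left-to-right maxima of $w$, so I would instead argue contrapositively at the level of $u$ — if $w = uv$ is a good factorization then any induced subpattern of $w$ on positions including $p_k = 1$ also factors, via the induced subpattern of $u$ (still avoiding $1324, 2413, 3142$, since pattern-avoidance is hereditary) times the induced lowering permutation. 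This shows a good factorization of $w$ forces good factorizations of all its "position-$1$-containing" sub-patterns, hence $w$ cannot contain any of the 13 (once we verify none of them factors).

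For the converse — if $w$ avoids all 13 patterns then a good factorization exists — I would take $T$ to be the full set of left-to-right maxima, set $u = u^{\max}(w)$, and show $u$ avoids $1324$, $2413$, $3142$. Suppose $u$ contains $2413$ (say); the four positions fall into two blocks — "moved maxima at the front" and "non-maxima in original position" — and a case analysis on how the four occurrence-positions distribute between the two blocks, together with the defining inequalities of left-to-right maxima (every moved entry is larger than everything originally before its old position, and the non-moved entries retain their order), pulls the $2413$ back to an occurrence in $w$ of one of the length-$5$ or length-$6$ forbidden patterns; the $51324/15324$ family handles the $1324$ obstruction, the $52413/25413$ and $53142/35142/31542$ families the $2413$ and $3142$ obstructions, and the six length-$6$ patterns arise in the subcases where two of the four chosen entries are moved maxima. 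I expect \textbf{this case analysis to be the main obstacle}: there are several subcases per forbidden pattern, and one must be careful that the "extra" letters needed to certify that a moved entry really is a left-to-right maximum (an entry to its left that it exceeds) combine with the four-entry occurrence to give exactly one of the listed patterns and not a longer one — this is presumably why the list has precisely these $13$ patterns and why some appear in both "$5$ first" and "second position" variants. Once $u$ is shown to avoid the three patterns, Theorem~\ref{thm:uv} applies directly and gives the proper lattice path representation of $\mathfrak S_w$, completing the proof.
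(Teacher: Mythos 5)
Your forward direction matches the paper's: restrict a factorization $w=uv$ to induced sub-patterns (a pattern of a lowering permutation is again a lowering permutation, and the length condition survives restriction) and then check by hand that none of the $13$ listed patterns itself factors. The genuine gap is in the converse. You propose to take $T$ to be the \emph{full} set of left-to-right maxima and show that the resulting $u^{\max}(w)$ avoids $1324$, $2413$, $3142$. This is false, and the paper's own running example is a counterexample: $w=32157684$ avoids all $13$ patterns, its left-to-right maxima are $3,5,7,8$, and moving all of them to the front gives $u^{\max}=87532164$, which contains the $3142$ pattern $5\,3\,6\,4$, so Theorem~\ref{thm:uv} does not apply to that factorization. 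The intuition that moving more maxima forward "tends to destroy" the forbidden patterns is backwards in the decisive case: if $w$ contains a $1342$ pattern $abcd$ in which the left-to-right maximum $b$ is moved but $c$ is not, then $u$ \emph{acquires} the $3142$ pattern $bacd$.

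The missing idea is the correct choice of the subset $Q$ of left-to-right maxima to move. The paper defines $Q$ greedily from the largest maximum down: a left-to-right maximum $q$ is added to $Q$ unless $w$ has a $1342$ occurrence $a\,q\,q'\,b$ with $q'\notin Q$ (in the example, $5$ is excluded because of the occurrence $1564$ with $6\notin Q$, yielding $Q=\{3,7,8\}$ and the separable $u=87321564$). Proving that $u=wv^{-1}$ for this $Q$ avoids the three patterns then rests on structural lemmas about $13542$ and $1342$ occurrences in permutations avoiding the $13$ patterns (Lemmas~\ref{lemma:13542}--\ref{lemma:qpatterns}), which your outline does not anticipate; the case analysis you sketch cannot close as stated because with $T$ maximal the claim you are trying to prove is simply not true.
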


Note that this theorem gives a sufficient, but not a necessary, condition for $\mathfrak S_w$ to have a proper lattice path representation. For further discussion, see \S5.

While the proof of the forward direction of Theorem~\ref{thm:main} will be relatively straightforward, for the reverse direction we will need to describe for each permutation $w$ avoiding the given 13 patterns how to construct the corresponding permutations $u$ and $v$. By Proposition~\ref{prop:length}, we will choose $v$ by choosing a certain subset of the left-to-right maxima of $w$. We will then verify that $u = wv^{-1}$ avoids $2413$, $3142$, and $1324$ as required.

Fix a permutation $w \in S_n$ that avoids the 13 patterns in Theorem~\ref{thm:main}. We construct a set $Q \subseteq [n]$ as follows. Consider the left-to-right maxima of $w$ from largest to smallest (i.e., from right to left). For each such $q$, add it to $Q$ unless $w$ has an occurrence of the pattern $1342$ consisting of letters $aqq'b$, where $q' \notin Q$. 

\begin{Ex}
	Let $w = 32157684$, which avoids the $13$ patterns in Theorem~\ref{thm:main}. The left-to-right maxima of $w$ are $8$, $7$, $5$, and $3$.
	\begin{itemize}
		\item We first add $8$ to $Q$ since it cannot be the second letter in a $1342$ pattern.
		\item Although $7$ is the second letter of several $1342$ patterns, the third letter in such patterns is always $8 \in Q$, so we add $7$ to $Q$.
		\item Now $5$ occurs in $1564$ and $6 \notin Q$, so we do not add $5$ to $Q$.
		\item Finally, we add $3$ to $Q$, so that $Q = \{3, 7, 8\}$.
	\end{itemize}
	The elements of $Q$ occur at positions $1$, $5$, and $7$. Note that if we let $v$ be the lowering permutation $34562718$, then the permutation $u = wv^{-1} = 87321564$ obtained by shifting the elements of $Q$ to the left in decreasing order is a $1324$-avoiding separable permutation.
\end{Ex}

We will also need some technical lemmas about the structure of the permutations in Theorem~\ref{thm:main}.

\begin{Lemma} \label{lemma:13542}
	Let $w$ be a permutation that avoids the 13 patterns in Theorem~\ref{thm:main}. If $w$ has a subsequence $abcde$ that forms a $13542$ pattern, then any letter that occurs between $b$ and $d$ in $w$ must be greater than $b$.
\end{Lemma}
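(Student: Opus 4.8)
The plan is to argue by contradiction: suppose $w$ has a subsequence $abcde$ forming a $13542$ pattern (so in terms of values $a < d < c < e < b$ is wrong---let me re-read the pattern). In a $13542$ pattern the values in positions $1,2,3,4,5$ have relative order $1,3,5,4,2$, so $a$ is the smallest, then $e < d < c$, and $c$ (the ``$5$'') is the largest; explicitly $a < e < d < c$ and $b$ sits between $a$ and $d$ in value, i.e.\ $a < b < d$. Now suppose for contradiction that some letter $x$ occurs strictly between $b$ and $d$ in $w$ with $x < b$. I would then split into cases according to how $x$ compares with $a$ and with $e$, and in each case exhibit one of the $13$ forbidden patterns (of length $5$ or $6$) using $x$ together with an appropriate subset of $\{a,b,c,d,e\}$.

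Concretely, the key step is the case analysis on the value of $x$ relative to the ``low'' values $a$ and $e$. If $a < x$, then the positions of $a, x, c, d, e$ (in that left-to-right order, since $x$ lies between $b$ and $d$, hence after $a$ and before $d$, and $c$ lies between $b$ and $d$ as well---one must be careful about the order of $c$ and $x$) should produce either a $31542$ or a $13542$-type configuration that, when combined with $b$, gives a six-letter forbidden pattern such as $143265$, $143625$, $143652$, or $146352$, or one of the five-letter ones $31542$, $51324$, $15324$. If instead $x < a$, then $x$ together with $b, c, d, e$ (with $x$ preceding $b$? no---$x$ is after $b$) should be analyzed: here $x, c, d, e$ or $x, b, c, d$ give patterns forcing $51324$ or $15324$ or $35142$, etc. The point is that inserting a small letter in the gap between $b$ and $d$ ``activates'' one more inversion-rich configuration than $13542$ already has, and $13542$ is close enough to the boundary of the forbidden set that any such activation lands inside it.

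I expect the main obstacle to be organizing the case analysis cleanly: there are potentially many subcases depending on (i) whether $x$ falls before or after $c$ in position, and (ii) where $x$ falls among the values $a, e, d, c$ (and also whether $x < e$, $e < x < d$, etc.). For each leaf of this case tree I need to name the five or six letters (in left-to-right order) and check that their pattern is one of the $13$; this is mechanical but error-prone, and I would want to tabulate it carefully rather than prose through it. A secondary subtlety is that the lemma statement only restricts letters between $b$ and $d$ \emph{in position}, and I must keep track of which of $\{c\}$ (and possibly others) also lie in that positional window, since $c$ itself is such a letter and the hypothesis $c > b$ is already guaranteed by the pattern; so the real content is only about \emph{new} letters $x$ not among the five.

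A cleaner route, which I would try first, is to reduce the number of cases by observing that it suffices to handle $x$ immediately: take $x$ to be, say, the leftmost letter smaller than $b$ occurring after $b$ (if any exists in the $(b,d)$ window), or alternatively to exploit minimality/maximality of $x$ to shrink the case tree. One could also pre-process by noting that either $x$ together with $b,d,e$ already gives (with $x<b<d$, and $e<d$) a short pattern, or else $x$ forces a comparison with $a$; combined with the $c$ already present, the six-letter patterns $143265$, $143625$, $413265$, $413625$ are exactly the ones designed to catch ``$a, (\text{something like }43), \text{two low letters}, e$'' configurations, which strongly suggests the intended proof is: if $x > a$ use the six-letter patterns built from $a, c, d$ (contributing the ``$143$'' or ``$413$'' prefix) and $x, e$ (contributing a decreasing or specially-ordered suffix), and if $x < a$ use the five-letter patterns $51324$/$15324$/$35142$/$31542$ built from $x$ or $a$ together with $b, c, d, e$. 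I would write the proof along exactly those lines, stating the two top-level cases $x > a$ and $x < a$ and within each a short finite check.
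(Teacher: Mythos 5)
Your overall strategy---contradiction plus a finite case analysis on the value of the interloping letter $x$ and on whether $x$ sits before or after $c$---is exactly the paper's strategy, but the proposal as written has two concrete problems. First, you never pin down the value order of a $13542$ pattern correctly: in $abcde$ forming $13542$ the values satisfy $a<e<b<d<c$, and in particular $e<b$; your reading ("$a<e<d<c$ and $a<b<d$") leaves the comparison of $b$ with $e$ undetermined, and that comparison is precisely what drives the correct case split. The paper splits on $x<e$ versus $e<x<b$ (these two cases exhaust $x<b$). If $x<e$, one discards $a$ and observes that $bxcde$ or $bcxde$ is a forbidden $31542$ or $35142$; if $e<x<b$, one keeps $a$ and observes that $abxcde$ or $abcxde$ is a forbidden $143652$ or $146352$. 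Your proposed top-level split on $x>a$ versus $x<a$ does not work as stated: the subcase $a<x<e$ lands in your ``$x>a$'' branch, where the six-letter configuration $abxcde$ has pattern $142653$, which is \emph{not} among the thirteen forbidden patterns---one must drop $a$ there and fall back on $31542$/$35142$. Second, you identify the wrong six-letter patterns as the ones doing the work: you emphasize $143265$, $143625$, $413265$, $413625$ (which are used elsewhere, in Lemma~\ref{lemma:qpatterns}(c)), whereas the patterns actually triggered here are $143652$ and $146352$.

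Beyond that, the proposal is a plan rather than a proof: no leaf of the case tree is actually verified, and the verifications are where the content lies (each is a one-line rank computation, but they must be done, and your own hedging acknowledges this). To repair the argument, replace the split on $x$ versus $a$ by the split on $x$ versus $e$, and in each of the resulting four leaves (two value ranges times two positions of $x$ relative to $c$) write out the five- or six-letter subsequence in left-to-right order and compute its pattern as above.
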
 
\begin{proof}
	Suppose $x$ lies between $b$ and $d$ in $w$. If $x < e$, then $w$ must contain either the $31542$ pattern $bxcde$ or the $35142$ pattern $bcxde$. If instead $e<x<b$, then $w$ contains either the $143652$ pattern $abxcde$ or the $146352$ pattern $abcxde$. Since all of these patterns are forbidden, we must have $x > b$.
\end{proof}

\begin{Lemma} \label{lemma:1342}
	Let $w$ be a permutation that avoids the 13 patterns in Theorem~\ref{thm:main}, and fix a left-to-right maximum $b \notin Q$. Let $c$ be the rightmost letter of $w$ such that $c \notin Q$ and $w$ contains a $1342$ pattern $abcd$. Then either $w$ contains a $13542$ pattern $abxcd$, or $w$ contains a $2413$ pattern $bcde$. 
\end{Lemma}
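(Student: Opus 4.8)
The plan is to argue by contradiction: assume $w$ contains neither a $13542$ pattern of the form $abxcd$ (for any choice of $x$ lying between $b$ and $c$) nor a $2413$ pattern $bcde$, and derive that $c$ fails to be the rightmost non-$Q$ letter completing a $1342$ pattern after $b$. First I would set up notation carefully: we are given a left-to-right maximum $b \notin Q$, and by the definition of $Q$, the reason $b$ was excluded is precisely that $w$ has a $1342$ pattern $a'bq'b'$ with $q' \notin Q$, where $q'$ is some left-to-right maximum processed before $b$ (hence larger than $b$, occurring to the right of $b$). So a $1342$ pattern $abcd$ with $c \notin Q$ and $c$ to the right of $b$ does exist, and we may take $c$ to be the rightmost such ``third letter.'' I would then also fix the positions: $a$ before $b$ before $c$, with $c > b > a$ forced by the $1342$ shape combined with $b$ being a left-to-right maximum (so everything before $b$ is smaller than $b$), and $d$ somewhere to the right of $c$ with $a < d < b$.

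The core of the argument is to analyze the letter $c$ and how it can itself be ``blocked'' from membership in $Q$ — note $c$ is not necessarily a left-to-right maximum, so I should instead track \emph{why} there is no later valid third letter. The key step will be to consider the letters strictly between $b$ and $c$ in $w$ and the letters to the right of $c$, and play the two forbidden configurations off each other. Specifically: if there is a letter $x$ with $b$ before $x$ before $c$ and $e < x < b$ for the eventual fourth letter, various length-5 patterns among $\{a,b,x,c,d\}$ or $\{a,b,c,x,d\}$ are forced; since the $13542$ pattern $abxcd$ is assumed absent, this pins down the relative order of $x$ versus $c$ and versus $d$. On the other side, if no $2413$ pattern $bcde$ occurs, then for every letter $e$ to the right of $c$ we cannot have $b < e$, $c > b$, $d > e$... — more precisely $bcde$ being $2413$ means (reading values) $b$ is the ``2'', $c$ the ``4'', $d$ the ``1'', $e$ the ``3'', i.e. $d < b < e < c$. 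Ruling this out means every letter after $c$ that exceeds $b$ must actually exceed... here I would carefully extract the constraint and then use it, together with the $1342$ pattern $abcd$, to build a $1342$ pattern $ab\,c'\,d$ (or $ab\,c'\,d'$) with $c'$ to the right of $c$ and $c' \notin Q$, contradicting maximality of $c$. Membership ``$c' \notin Q$'' will follow by invoking the defining property of $Q$ backwards — if $c' \in Q$ then $c'$ survived the $1342$-blocking test, which combined with the structure around $b$ and the absence of the two forbidden patterns gives a contradiction; alternatively $c'$ may be chosen among non-maxima, which are automatically not in $Q$.

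The main obstacle I anticipate is the bookkeeping of which of the thirteen forbidden patterns gets invoked in each sub-case: there are several positions a putative blocking letter $x$ or later letter $e$ can occupy relative to $\{a,b,c,d\}$, and each position forces a different length-5 or length-6 pattern, so the proof will be a moderately long case analysis. A secondary subtlety is handling the interaction with the inductive/recursive definition of $Q$ — I must be careful that when I produce a new $1342$ pattern $abc'd'$ I genuinely contradict the \emph{choice} of $c$ as rightmost, rather than merely restating the hypothesis; keeping the roles of ``left-to-right maximum,'' ``element of $Q$,'' and ``third letter of a $1342$ pattern'' cleanly separated is what makes this go through. I expect Lemma~\ref{lemma:13542} to be used repeatedly to kill off the ``$x > b$'' alternative whenever a blocking letter appears between $b$ and a ``$d$''-type letter, which streamlines the case work considerably.
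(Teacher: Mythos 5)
Your high-level plan (argue by contradiction against the maximality of $c$, via a case analysis on forbidden patterns) points in a workable direction, but the proposal is missing the two ideas that actually drive the argument, and as written it cannot be completed. First, the dichotomy that organizes the proof is whether $c$ is a left-to-right maximum of $w$. If it is not, you are done immediately: some $x>c$ lies to its left, and since $b$ is a left-to-right maximum with $x>c>b$, this $x$ must sit between $b$ and $c$, so $abxcd$ is the desired $13542$ pattern. You explicitly decline to case on this (``$c$ is not necessarily a left-to-right maximum, so I should instead track why there is no later valid third letter''), which discards the easy half of the proof and obscures the hard half.

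Second, and more importantly, in the remaining case the engine of the proof is unwinding the definition of $Q$ \emph{at $c$ itself}: since $c$ is a left-to-right maximum with $c\notin Q$, there is a $1342$ pattern $fcge$ with $g\notin Q$, and these new letters $f,g,e$ are exactly what the argument needs. Maximality of $c$ forces $g$ to lie to the right of $d$ (else $abgd$ would beat $c$); then the value of $e$ splits into three sub-cases, two of which ($a<e<b$, giving the pattern $abge$; and $e<a$, where the position of $f$ yields either $fbge$ or the forbidden $35142$ pattern $abfde$) contradict the choice of $c$, leaving $e>b$, which produces precisely the $2413$ pattern $bcde$. Your proposal only invokes the definition of $Q$ to justify the existence of $c$, and gestures at building ``a $1342$ pattern $abc'd'$ with $c'$ to the right of $c$'' without identifying where such a $c'$ would come from; the blocking pattern for $c$ is that source. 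Two smaller inaccuracies: the witness $q'$ in the blocking pattern for $b$ need not be a left-to-right maximum (any letter outside $Q$ qualifies, including non-maxima), and Lemma~\ref{lemma:13542} is not actually needed here --- it enters later, in the proof of Lemma~\ref{lemma:qpatterns}.
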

\begin{proof}
	Since $c \notin Q$, there are two possibilities.
	\begin{itemize}
		\item If $c$ is not a left-to-right maximum, then there must be a larger letter $x$ to its left. Since $b$ is a left-to-right maximum and $x > c > b$, $w$ must have the $13542$ pattern $abxcd$.
		\item If $c$ is a left-to-right maximum, then since $c \notin Q$, it must be part of a $1342$ pattern $fcge$ with $g \notin Q$. By our choice of $c$ to be rightmost, we must have that $g$ lies to the right of $d$ (or else $abgd$ would be a $1342$ pattern). Then:
		\begin{itemize}
			\item If $a < e < b$, then $abge$ would be a $1342$ pattern that contradicts our choice of $c$.
			\item If $e < a$, then $f$ cannot lie to the left of $b$ or else $fbge$ would be a $1342$ pattern that contradicts our choice of $c$. Hence $f$ has to lie to the right of $b$, but then $w$ would contain the $35142$ pattern $abfde$, which is a contradiction.
		\end{itemize}
		The only remaining possibility is that $e > b$, which implies that $w$ has the $2413$ pattern $bcde$, as desired. \qedhere
	\end{itemize}
\end{proof}

Using Lemmas~\ref{lemma:13542} and \ref{lemma:1342}, we can now prove most of the pattern conditions that we will need for Theorem~\ref{thm:main}.

\begin{Lemma} \label{lemma:qpatterns}
	Let $w$ be a permutation that avoids the 13 patterns in Theorem~\ref{thm:main}.
	\begin{enumerate}[(a)]
		 \item Suppose $w$ contains the $2413$ pattern $abcd$. Then $b \in Q$.
		 \item Suppose $w$ contains the $3142$ pattern $abcd$. Then $c \in Q$.
		 \item Suppose $w$ contains the $1324$ pattern $abcd$ with $d \notin Q$. Then $b \in Q$.
		 \item Suppose $w$ contains the $1342$ pattern $abcd$ with $c \notin Q$. Then $b \notin Q$.
	\end{enumerate}
\end{Lemma}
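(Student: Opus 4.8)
\textbf{Proof proposal for Lemma~\ref{lemma:qpatterns}.}

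The plan is to prove each of the four parts by assuming the conclusion fails and deriving a contradiction with the construction of $Q$, using Lemmas~\ref{lemma:13542} and \ref{lemma:1342} together with the list of 13 forbidden patterns. Recall that the only way a left-to-right maximum $q$ fails to be in $Q$ is that, at the moment $q$ is processed (from largest to smallest), $w$ has a $1342$ pattern $a q q' b$ with $q' \notin Q$; and since elements are processed in decreasing order, $q' < q$ and $q'$ is a left-to-right maximum processed before $q$. A useful preliminary observation is that any letter not in $Q$ that plays one of these roles is a left-to-right maximum, since $Q$ is built only from left-to-right maxima.

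For part (d): if $w$ contains a $1342$ pattern $abcd$ with $c \notin Q$, then $c$ is a left-to-right maximum, and by definition of $Q$ the reason $c \notin Q$ is a $1342$ pattern $a'cc'b'$ with $c' \notin Q$. I want to transfer this obstruction to $b$. Since $b$ is a left-to-right maximum with $b < c$ (as $abcd$ is a $1342$ pattern, $c$ is the largest letter, so $b < c$), $b$ is processed after $c$; so if $b$ were in $Q$, it was added despite... I would show that the pattern $a'cc'b'$ (or a variant obtained by replacing $c$ with $b$ and adjusting) gives a $1342$ occurrence $a b c' b'$ witnessing $b \notin Q$ — this requires checking that $a$ can serve as the small first letter and that the relative positions of $b, c', b'$ are compatible, which is where a short case analysis on whether $c'$ and $b'$ lie left or right of $b$ comes in, invoking forbidden patterns like $143625$ or $143652$ to rule out the bad positionings. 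This part should be done first since parts (a)–(c) will use it.

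For parts (a)–(c): in each case I assume $w$ has the stated pattern but the designated left-to-right maximum (namely $b$ in (a), $c$ in (b), $b$ in (c)) is \emph{not} in $Q$. Then by the construction there is a $1342$ witness $\alpha\, m\, m'\, \beta$ with $m$ the excluded maximum and $m' \notin Q$. Combining this $1342$ witness with the given $2413$ / $3142$ / $1324$ pattern produces five or six letters in $w$; the goal is to show these letters contain one of the 13 forbidden patterns of length 5 or 6, giving the contradiction. For (a), the $2413$ pattern $abcd$ together with a $1342$ pattern $\alpha b b' \beta$ (where $b'<b$, $b'\notin Q$, $b'$ to the right of $b$) should, after locating $c$ and $d$ relative to $b'$ and $\beta$ and using Lemma~\ref{lemma:1342} or \ref{lemma:13542} to control intermediate letters, yield a pattern among $52413, 25413$ (length 5) — and when an extra letter is forced in, one of the length-6 patterns $143625$, $413625$, etc. Parts (b) and (c) are analogous, landing in $53142/35142/31542$ and $51324/15324$ (plus length-6 companions) respectively.

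The main obstacle I anticipate is the bookkeeping of \emph{relative positions}: the $1342$ witness for $m \notin Q$ places $m'$ and $\beta$ somewhere in $w$ relative to the letters $a,b,c,d$ of the given pattern, and there are several cases (is $m'$ left or right of $c$? is $\beta$ before or after $d$? is the ``1'' of the two patterns the same letter or not?). For each case one must name the right 5 or 6 letters and verify they realize a specific forbidden pattern, sometimes only after applying Lemma~\ref{lemma:13542}/\ref{lemma:1342} to force an auxiliary letter between two of them. Keeping the case tree small — ideally by using part (d) to immediately discard the subcases where $m'$ would have to lie in an awkward spot — will be the key to a clean argument rather than an explosion of cases.
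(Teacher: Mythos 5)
Your overall strategy for (a)--(c) --- assume the designated letter is excluded from $Q$, extract the $1342$ witness, and collide it with the given pattern to produce one of the 13 forbidden patterns via Lemmas~\ref{lemma:13542} and \ref{lemma:1342} --- is indeed the paper's strategy, but the proposal as written has a concrete error and omits two ingredients without which the case analysis cannot be closed. First, you have the witness pattern backwards: in an occurrence $aqq'b$ of $1342$ the letter $q'$ is the \emph{largest} of the four, so $q' > q$, not $q' < q$; and $q'$ need not be a left-to-right maximum at all (the construction only requires $q' \notin Q$), so your ``preliminary observation'' is false. This misreading is what makes your treatment of (d) unnecessarily elaborate: in part (d) the given pattern $abcd$ \emph{is already} the witness excluding $b$, since $b$ sits in the $q$-position and $c$ in the $q'$-position with $c \notin Q$ (and $c > b$ guarantees $c$ is processed first); no ``transfer'' of the obstruction from $c$ to $b$ is needed, and (d) follows immediately from the definition of $Q$.

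For (a)--(c) there are two genuine gaps. (1) Since $Q$ contains only left-to-right maxima, concluding ``$b \in Q$'' requires first proving that $b$ (resp.\ $c$ in (b)) \emph{is} a left-to-right maximum; this does not come for free and is exactly where the length-5 forbidden patterns $52413$, $25413$ (for (a)) and $53142$, $35142$, $31542$ (for (b)) enter, and where part (c) needs a substantially longer argument involving an auxiliary letter $h > d$ to the left of $d$. Your plan assumes the letter is a left-to-right maximum from the outset. (2) Lemma~\ref{lemma:1342} has two alternatives, and the $2413$-alternative does not directly produce a forbidden pattern --- it produces another instance of the hypothesis of part (a), threatening circularity. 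The paper breaks this by an extremal choice (take $b$ to be the \emph{rightmost} offending left-to-right maximum), so that the $2413$-alternative contradicts maximality, leaving only the $13542$-alternative, to which Lemma~\ref{lemma:13542} then applies; once (a) is established, this also shows the $2413$-alternative never occurs, which is what makes (b) and (c) tractable. Your proposal acknowledges the positional bookkeeping but supplies neither the extremal device nor the left-to-right-maximum verification, so as it stands it is a plausible outline rather than a proof.
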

\begin{proof}
	For (a), note that $b$ must be a left-to-right maximum, for if it were not, then some letter to the left of $b$ would be greater than $b$, which would cause $w$ to contain a forbidden $52413$ or $25413$ pattern.
	
	Suppose the claim does not hold, and let us take $b$ to be the rightmost left-to-right maximum in a $2413$ pattern $abcd$ with $b \notin Q$. By Lemma~\ref{lemma:1342}, either $w$ has some $2413$ pattern $befg$ with $e \notin Q$, which contradicts our choice of $b$, or $b$ contains a $13542$ pattern $ebfgh$. In the latter case, by Lemma~\ref{lemma:13542}, since $c$ and $d$ are both less than $b$, they must lie to the right of $g$. But then $w$ contains the forbidden $25413$ pattern $afgcd$, completing the proof of (a).
	
	
	Note that (a) implies that the second possibility in Lemma~\ref{lemma:1342} can never hold. In other words, any left-to-right maximum that does not lie in $Q$ must appear second in a $13542$ pattern.
	
	For (b), note that $c$ must be a left-to-right maximum or else $w$ would contain a forbidden $53142$, $35142$, or $31542$ pattern. Suppose $c \not\in Q$. Then by Lemma~\ref{lemma:1342} (as per the discussion above) there exists a $13542$ pattern $ecfgh$. By Lemma~\ref{lemma:13542}, $d < c$ cannot lie between $c$ and $g$, so $d$ must lie to the right of $g$. But then $abfgd$ is a forbidden $31542$ pattern in $w$. So $c \in Q$.
	
	For (c), for a fixed $b$, let us choose $d \notin Q$ to be rightmost. If $d$ were a left-to-right maximum, then there would have to be a $1342$ pattern $edfg$ with $f \notin Q$. But then the $1324$ pattern $abcf$ would contradict the choice of $d$. Hence $d$ is not a left-to-right maximum. Therefore, there exists some $h > d$ to the left of $d$. If $h$ lies to the left of $b$, then $w$ would either contain the $51324$ pattern $habcd$ or the $15324$ pattern $ahbcd$, which are both forbidden. Thus $h$ lies to the right of $b$ (and to the left of $d$).
	
	Suppose $b$ is not a left-to-right maximum. Then there exists some $i > b$ to the left of $b$. But we cannot have $i > d$ for then $w$ would contain $iabcd$ or $aibcd$, which would be a $51324$ or $15324$ pattern, nor can we have $i < d$ for then $w$ would contain one of $iabhcd$, $iabchd$, $aibhcd$, or $aibchd$, which would be a $413625$, $413265$, $143625$, or $143265$ pattern. Thus $b$ must be a left-to-right maximum.
	
	Now suppose for the sake of contradiction that $b \notin Q$. By Lemma~\ref{lemma:1342}, there exists a $13542$ pattern $jbklm$. By Lemma~\ref{lemma:13542}, $c < b$ cannot appear between $b$ and $l$, so it must appear after $l$. If $d < l$, then $bklcd$ would be a forbidden $25413$ pattern. If $l < d< k$, then $aklcd$ would be a forbidden $15324$ pattern. Hence $d > k$.
	
	Recall that $h > d$ lies to the right of $b$. If $h$ lies to the left of $l$, then $ahlcd$ would be a forbidden $15324$ pattern. Then $h$ must lie to the right of $l$, but now $w$ must contain either the $143265$ pattern $aklchd$ or the $143625$ pattern $aklhcd$, which are forbidden. It follows that we must have $b \in Q$, as desired.
	
	Finally, (d) follows immediately from the construction of $Q$.
\end{proof}

It is now straightforward to deduce our main result.

\begin{proof}[Proof of Theorem~\ref{thm:main}]
	We first verify that any permutation $w$ with a factorization $w=uv$ as in Theorem~\ref{thm:uv} must avoid the given $13$ patterns. Note that if $w'$ is a pattern of $w$, then there exist patterns $u'$ of $u$ and $v'$ of $v$ such that $w'=u'v'$. Any pattern $v'$ contained in the lowering permutation $v$ is again a lowering permutation. By Proposition~\ref{prop:length}, the length condition $\ell(w)=\ell(u)-\ell(v)$ implies that multiplying $u$ by $v$ has the effect of shifting the first $k$ letters in $u$ to become left-to-right maxima of $w$. But any left-to-right maximum of $w$ chosen to appear in $w'$ will still be a left-to-right maximum. It follows that $\ell(w') = \ell(u')-\ell(v')$, so $w'$ must also satisfy the conditions of Theorem~\ref{thm:uv}.
	
	Therefore, we need only verify that none of the $13$ patterns $w'$ have such a factorization $u'v'$.	To see this, observe that each pattern other than $143652$ and $146352$ contains a $1324$, $2413$, or $3142$ pattern that does not involve any left-to-right maxima except for possibly the first letter. Since these would necessarily remain in the same order in $u'$, $u'$ cannot avoid these three patterns. For the last two patterns $143652$ and $146352$, depending on whether the left-to-right maximum $4$ is moved, $u'$ must contain either the $1324$-pattern $1435$ or the $3142$-pattern $4152$.
	
	For the reverse direction, we need to verify that any permutation $w$ that avoids the given $13$ patterns has the requisite factorization $w=uv$. Defining the set $Q$ as described, let $u$ be the permutation obtained from $w$ by shifting the elements of $Q$ to the left and placing them in decreasing order, so that $u=wv^{-1}$ for some lowering permutation $v$ with $\ell(w)=\ell(u)-\ell(v)$ as in Proposition~\ref{prop:length}. If $u$ were to contain one of the patterns $2413$, $3142$, or $1324$, then there are only four possibilities for how these letters could be ordered in $w$:
	\begin{enumerate}[(a)]
		\item $w$ contains the $2413$ pattern $abcd$ and $b \notin Q$, so that $abcd$ occurs in $u$;
		\item $w$ contains the $3142$ pattern $abcd$ and $c \notin Q$, so that $abcd$ occurs in $u$;
		\item $w$ contains the $1324$ pattern $abcd$ and $b,d \notin Q$, so that $abcd$ occurs in $u$;
		\item $w$ contains the $1342$ pattern $abcd$ with $b \in Q$ and $c \notin Q$, so that the $3142$ pattern $bacd$ occurs in $u$.
	\end{enumerate}
	However, all of these are impossible by Lemma~\ref{lemma:qpatterns}, which completes the proof.	
\end{proof}

\section{Conclusion}

Although Theorem~\ref{thm:main} gives a determinantal formula for a wide class of Schubert polynomials, the precise characterization of which Schubert polynomials admit such a formula remains open.

\begin{?}
	For which permutations $w \in S_\infty$ does $\mathfrak S_w$ admit a proper lattice path representation (and hence a determinantal formula for its SEM expansion)? Is the set of such permutations closed under pattern containment?
\end{?}

We note in particular that the condition in Theorem~\ref{thm:main} is sufficient but not necessary. For example, although $413625$ is a forbidden pattern,
\[\mathfrak S_{413625} = \left|\begin{matrix}
	e_1^{(1)}&e_2^{(2)}&0&0\\
	e_0^{(1)}&e_1^{(2)}&e_4^{(4)}&e_5^{(5)}\\
	0&e_0^{(2)}&e_3^{(4)}&e_4^{(5)}\\
	0&0&e_0^{(4)}&e_1^{(5)}
\end{matrix}\right|\]
has the proper lattice path representation shown in Figure~\ref{fig:413265}. From this, one can then use Proposition~\ref{prop:slide} to derive representations for $\mathfrak S_{413265}$, $\mathfrak S_{143625}$, and $\mathfrak S_{143265}$.
(The Schubert polynomials for the remaining nine forbidden patterns, including all of the ones of length $5$, do not have proper lattice path representations.)

\begin{figure}
	\begin{tikzpicture} [scale=.7]
		\foreach \x in {0,...,5}
		\foreach \y in {0,...,4}
		\draw[dashed] (\x,\y)--(\x,\y+1);
		\foreach \x in {1,...,5}
		\foreach \y in {0,...,4}
		\draw[dashed] (\x,\y)--(\x-1,\y+1);
		\node[v] (a1) at (0,0){};
		\node[v] (a2) at (1,0){};
		\node[v] (a3) at (2,0){};
		\node[v] (a4) at (5,0){};
		\node[w] (b1) at (0,1){};
		\node[w] (b2) at (0,2){};
		\node[w] (b3) at (1,4){};
		\node[w] (b4) at (1,5){};
		\draw[thick] (a1)--(b1);
		\draw[thick] (a2)--(1,1)--(b2);
		\draw[thick] (1,2)--(1,3)--(b3)--(2,3)--(2,2)--(2,1)--(a3) (2,1)--(1,2) (2,2)--(1,3);
		\draw[thick] (a4)--(4,1)--(3,2)--(2,3)--(2,4)--(b4) (2,4)--(3,3)--(4,2)--(5,1)--(a4) (4,1)--(4,2) (3,2)--(3,3);
		
	\end{tikzpicture}
	\caption{Lattice path representation for $\mathfrak S_{413625}$. (Edges that appear in at least one collection of nonintersecting lattice paths are solid.)}
	\label{fig:413265}
\end{figure}
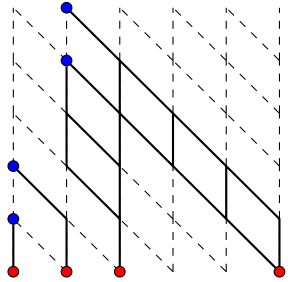


Recall that any polynomial with a proper lattice path representation also has the property that its SEM expansion only has coefficients of absolute value at most $1$. One can then ask similar questions about the class of Schubert polynomials satisfying this weaker property. (See Winkel \cite{Winkel} for some discussion, as well as \cite{BP,FMS} for some similar studies.)

\begin{?}
	For which permutations $w \in S_\infty$ does the SEM expansion of $\mathfrak S_w$ have only coefficients of absolute value at most $1$? Is the set of such permutations closed under pattern containment?
\end{?}

Our proof of Theorem~\ref{thm:main} is algebraic as opposed to combinatorial. A bijective proof certainly exists for certain subclasses of permutations (for instance, Grassmannian permutations), and to some extent one can use the operations of \S3 to generate bijections for other cases covered by Theorem~\ref{thm:main}. However, it is unclear whether a uniform bijection exists in general, particularly in cases not covered by Theorem~\ref{thm:main}.

\begin{?}
	When $\mathfrak S_w$ has a proper lattice path representation, is there a natural bijection between the corresponding collections of nonintersecting lattice paths and other known combinatorial interpretations for $\mathfrak S_w$ (such as reduced pipe dreams)?
\end{?}

%

\bibliographystyle{acm}
\bibliography{citations}

\end{document}